\newtheorem{theorem}{Theorem}[section]
\newtheorem{acknowledgement}[theorem]{Acknowledgement}
\newtheorem{corollary}[theorem] {Corollary}
\newtheorem{definition}[theorem]{Definition}
\newtheorem{lemma} [theorem]{Lemma}
\newtheorem{proposition}[theorem]{Proposition}
\newtheorem{remark}[theorem]{Remark}
\newtheorem{workinghypothesis}[theorem]{Working Hypothesis}
\newcommand{\Z}{\mathbb{Z}}
\newcommand{\C}{\mathbb{C}}
\newcommand{\GL}{{\rm GL}}
\newcommand{\SL}{{\rm SL}}
\newcommand{\Hom}{{\rm Hom}}
\newcommand{\ind}{{\rm ind}}
\newcommand{\Ind}{{\rm Ind}}
\newcommand{\Ext}{{\rm Ext}}
\newcommand{\vol}{{\rm vol}}
\title{Branching laws for the metaplectic cover of $\GL_{2}$}
\author{Shiv Prakash Patel}
\address{Department of Mathematics, Indian Institute of Technology Bombay, Mumbai - 400076}
\email{shiv@math.iitb.ac.in}
\date{\today}
\subjclass[2010]{Primary 22E35; Secondary 22E50}
\keywords{metaplectic group, branching laws}
\begin{document}
\maketitle

%\hrule
\begin{abstract}
 Let $F$ be a non-Archimedian local field of characteristic zero and $E/F$ a quadratic extension. 
 The aim of the present article is to study the multiplicity of an irreducible admissible representation of $\GL_2(F)$ occurring in an irreducible admissible genuine representation of non-trivial two fold covering $\widetilde{\GL}_2(E)$ of $\GL_2(E)$. 
\end{abstract}
%\hrule

\section{Introduction}  \label{introduction: DP-metaplectic}
Let $F$ be a non-Archimedian local field of characteristic zero and let $E$ be a quadratic extension of $F$. 
The problem of decomposing a representation of $\GL_2(E)$ restricted to $\GL_2(F)$ was considered and solved by D. Prasad in \cite{Prasad92}, proving a multiplicity one theorem, and giving an explicit classification of representations $\pi_1$ of $\GL_2(E)$ and $\pi_2$ of $\GL_2(F)$ such that there exists a non-zero $\GL_2(F)$ invariant linear form:
\[
 l : \pi_1 \otimes \pi_2 \rightarrow \C.
\]
This problem is closely related to a similar branching law from $\GL_{2}(E)$ to $D_{F}^{\times}$, 
where $D_{F}$ is the unique quaternion division algebra which is central over $F$, and $D_{F}^{\times} \hookrightarrow \GL_{2}(E)$. We recall that the embedding $D_{F}^{\times} \hookrightarrow \GL_{2}(E)$ is given by fixing an isomorphism $D_{F} \otimes E \cong M_{2}(E)$, by the Skolem-Noether theorem, 
which is unique up to conjugation by elements of $\GL_{2}(E)$. 
Henceforth, we fix one such embedding of $D_{F}^{\times}$ inside $\GL_{2}(E)$. 
The restriction problems for the pair $(\GL_{2}(E), \GL_{2}(F))$ and $(\GL_{2}(E), D_{F}^{\times})$ are related by a certain dichotomy. 
More precisely, the following result was proved in \cite{Prasad92} :
\begin{theorem}[D. Prasad]
Let $\pi_{1}$ and $\pi_2$ be irreducible admissible infinite dimensional representations of $\GL_{2}(E)$ and $\GL_{2}(F)$ respectively such that the central character of $\pi_1$ restricted to the center of $\GL_2(F)$ is the same as the central character of $\pi_2$.  
Then
\begin{enumerate}
 \item For a principal series representation $\pi_2$ of $\GL_2(F)$, we have
 \[
\dim \Hom_{\GL_2(F)} \left( \pi_{1},\pi_{2} \right) = 1.
\]
 \item For a discrete series representation $\pi_2$ of $\GL_2(F)$, 
 let $\pi_2'$ be the finite dimensional representation of $D_{F}^{\times}$ associated to $\pi_2$ by the Jacquet-Langlands correspondence, then
 \[
  \dim \Hom_{\GL_2(F)} \left( \pi_{1},\pi_{2} \right) + \dim \Hom_{D_{F}^{\times}} \left( \pi_{1}, \pi_{2}' \right) = 1.
 \]
\end{enumerate}
\end{theorem}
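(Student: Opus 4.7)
My plan is three-step: bound multiplicities by one using a Gelfand pair argument, produce the invariant form explicitly in the principal series case, and prove the discrete series dichotomy via character theory together with the Jacquet--Langlands correspondence.

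For the upper bound, I would apply the Gelfand--Kazhdan method to show that $(\GL_2(E), \GL_2(F))$ and $(\GL_2(E), D_F^\times)$ are both Gelfand pairs. This requires producing an anti-involution of $\GL_2(E)$ (the transpose is the natural candidate) that stabilizes each subgroup up to conjugacy, and a classification of the $\GL_2(F) \times \GL_2(F)$-invariant distributions on $\GL_2(E)$. Because $\GL_2(E)/\GL_2(F)$ is a symmetric space under the Galois involution, a Bruhat-style decomposition of the double coset space reduces the problem to finitely many orbits, and on each orbit the covariant distributions can be analyzed directly via standard Bernstein--Zelevinsky localization arguments. The same argument works verbatim for the inner form $D_F^\times$.

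For the principal series case, writing $\pi_2 = \Ind_{B(F)}^{\GL_2(F)}\chi$ and applying Frobenius reciprocity reduces the computation to $\Hom_{B(F)}(\pi_1, \chi \cdot \delta_{B(F)}^{1/2})$. Using the Kirillov model of $\pi_1$ realized on functions on $E^\times$, and the observation that $B(F)$ acts on this model via its image in $B(E)$, I would construct an explicit linear functional by integration against the appropriate character over a suitable quotient of $E^\times$. Existence is given by the explicit construction; uniqueness follows from the Gelfand pair step above.

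The main obstacle is the dichotomy for discrete series, which I would approach via characters. Let $m_1 = \dim \Hom_{\GL_2(F)}(\pi_1, \pi_2)$ and $m_2 = \dim \Hom_{D_F^\times}(\pi_1, \pi_2')$. Expressing both multiplicities as integrals of $\chi_{\pi_1}$ against the relevant characters over the appropriate elliptic regular tori, the sum $m_1 + m_2$ should simplify considerably: by Jacquet--Langlands one has $\chi_{\pi_2} = -\chi_{\pi_2'}$ on matching elliptic regular elements, while the elliptic regular conjugacy classes in $\GL_2(E)$ that are relevant partition into those coming from $\GL_2(F)$-elliptic tori and those coming from $D_F^\times$. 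The delicate points are (i) justifying the character-sum identity rigorously, in particular controlling convergence and matching elliptic classes across the inner forms, and (ii) showing that non-elliptic contributions cancel so that the remaining integral evaluates to~$1$. This character-theoretic collapse is the heart of the matter; in Prasad's original proof it is handled by a combination of explicit matrix coefficient computations and, in some cases, by invoking the theta correspondence for an appropriate small dual pair, which exposes the arithmetic reason behind the sign dichotomy.
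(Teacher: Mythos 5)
Your outline captures some of the right ingredients but diverges from Prasad's argument in important ways, and the divergence creates genuine gaps rather than a different-but-complete proof.

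For the principal series case your instinct is correct: Frobenius reciprocity plus the Kirillov model is exactly the engine. But you split the argument into "existence by explicit integration" and "uniqueness from a Gelfand pair step," whereas Prasad's (and this paper's) argument gets both at once: one computes the Jacquet module $(\pi_1)_{N(F)}$ via the Kirillov short exact sequence $0 \to \mathcal{S}(E^\times, (\pi_1)_{N(E),\psi}) \to \mathtt{K}(\pi_1) \to (\pi_1)_{N(E)} \to 0$, identifies $\mathcal{S}(E^\times,\cdot)_{N(F)}$ with $\mathcal{S}(F^\times,\cdot)$ by a Fourier-transform argument, and then reads off $\dim\Hom_{A(F)}((\pi_1)_{N(F)},\chi\delta^{1/2})$ directly. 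This produces the exact dimension, so no separate Gelfand pair input is needed. The Gelfand--Kazhdan route for the symmetric pair $(\GL_2(E),\GL_2(F))$ is plausible in hindsight (it was carried out later by others), but it is a substantially harder distribution-theoretic project than what the problem requires, and it would only give an inequality, not the equality.

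The more serious gap is in the discrete series dichotomy. You propose to express $m_1 + m_2$ as a sum of character integrals over elliptic tori and hope the answer "simplifies to 1," but you have no anchor for that evaluation. Prasad's proof proceeds in two stages you do not mention. First, when $\pi_1$ is itself a \emph{principal series} of $\GL_2(E)$, Mackey theory applied to the two $\GL_2(F)$-orbits on $\mathbb{P}^1_E$ (and the single $D_F^\times$-orbit) gives a short exact sequence reducing both $\Hom_{\GL_2(F)}(\pi_1,\pi_2)$ and $\Hom_{D_F^\times}(\pi_1,\pi_2')$ to $\Hom_{E^\times}$-spaces, which are computed explicitly; this yields $m_1+m_2=1$ in the principal series case. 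Second, for general $\pi_1$ (discrete series or supercuspidal of $\GL_2(E)$), one compares $\pi_1$ (or a suitable finite sum) with a principal series $Ps$ having the same central character, uses the Casselman--Prasad theorem that $\Theta_{\pi_1}-\Theta_{Ps}$ is a smooth function, and then — using Kutzko's theorem that a supercuspidal $\pi_2$ is induced from a compact-mod-center subgroup together with Harish-Chandra's orbital integral formula for matrix coefficients — shows $m(\pi_1 - Ps,\pi_2) + m(\pi_1 - Ps,\pi_2') = 0$, transferring the principal series answer. Without the Mackey step you have no known value to transfer from, and without the Casselman--Prasad smoothness statement the character integrals you write down are not even well-defined near non-regular elements. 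The theta correspondence, which you mention as "exposing the arithmetic reason," is used elsewhere in Prasad's paper for the epsilon-factor criterion, not in the proof of this dichotomy.
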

In this paper, we will study the analogous problem in the metaplectic setting. 
More precisely, instead of considering $\GL_2(E)$ we will consider the group $\widetilde{\GL}_{2}(E)_{\C^{\times}}$ which is a topological central extension of $\GL_{2}(E)$ by $\C^{\times}$, which is obtained from the two fold topological central extension $\widetilde{\GL}_{2}(E)$ described below. 
We recall that there is unique (up to isomorphism) two fold cover of $\SL_{2}(E)$ called the metaplectic cover and denoted by $\widetilde{\SL}_{2}(E)$ in this paper, but there are many inequivalent two fold coverings of $\GL_{2}(E)$ which extend this two fold covering of $\SL_{2}(E)$. We fix a covering of $\GL_{2}(E)$ as follows. 
Observe that $\GL_{2}(E)$ is a semi-direct product of $\SL_{2}(E)$ and $E^{\times}$, where $E^{\times}$ sits inside $\GL_{2}(E)$ by $e \mapsto \left( \begin{matrix} e & 0 \\ 0 & 1 \end{matrix} \right)$. 
The action of $E^{\times}$ on $\SL_{2}(E)$ lifts to an action on $\widetilde{\SL}_{2}(E)$.
Denote $\widetilde{\GL}_{2}(E) = \widetilde{\SL}_{2}(E) \rtimes E^{\times}$ which we call `the' metaplectic cover of $\GL_{2}(E)$. 
This cover can be described by an explicit 2-cocycle on $\GL_{2}(E)$ with values in $\{ \pm 1 \}$, see \cite{Kubota69}.
The group $\widetilde{\GL}_{2}(E)$ is a topological central extension of $\GL_{2}(E)$ by $\mu_{2} := \{ \pm 1 \}$, i.e., we have an exact sequence of topological groups:
\[
 1 \rightarrow \mu_{2} \rightarrow \widetilde{\GL}_{2}(E) \rightarrow \GL_{2}(E) \rightarrow 1.
\]
The group $\widetilde{\GL}_{2}(E)_{\C^{\times}} := \widetilde{\GL}_{2}(E) \times_{\mu_2} \C^{\times}$ is called the $\C^{\times}$-cover of $\GL_{2}(E)$ obtained from the two fold cover $\widetilde{\GL}_{2}(E)$, and is a topological central extension of $\GL_{2}(E)$ by $\C^{\times}$, i.e., we have an exact sequence of topological groups:
\[
 1 \rightarrow \C^{\times} \rightarrow \widetilde{\GL}_{2}(E)_{\C^{\times}} \rightarrow \GL_{2}(E) \rightarrow 1.
\]
Now we recall the following result regarding splitting of this cover when restricted to certain subgroups. 
This makes it possible to consider an analog of the Prasad's restriction problem in the metaplectic case.
\begin{theorem} {\rm \cite{Patel14}} \label{splitting theorem}
Let $E$ be a quadratic extension of a non-Archimedian local field and $\widetilde{\GL}_{2}(E)$ be the two-fold metaplectic covering of $\GL_{2}(E)$.  
Then:
\begin{enumerate}
\item The two fold metaplectic covering $\widetilde{\GL}_{2}(E)$ splits over the subgroup $\GL_{2}(F)$.
\item The $\C^{\times}$-covering obtained from $\widetilde{\GL}_{2}(E)$ splits over the subgroup $D_{F}^{\times}$. 
\end{enumerate} 
\end{theorem}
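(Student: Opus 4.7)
The plan is to work with the explicit Kubota 2-cocycle $\sigma:\GL_2(E)\times\GL_2(E)\to \mu_2$ representing $\widetilde{\GL}_2(E)$, built from the Hilbert symbol $(\cdot,\cdot)_E$ of $E$, and to trivialize its restriction to the respective subgroups.

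For Part (1), the key input is the Hilbert symbol projection formula: for $a\in F^\times$ and $b\in E^\times$, $(a,b)_E=(a,\Norm_{E/F}(b))_F$. Specialising to $b\in F^\times$ gives $(a,b)_E=(a,b^2)_F=(a,b)_F^2=1$, so $\sigma$ restricted to $\SL_2(F)\times\SL_2(F)$ is identically $1$. The set-theoretic identity section of $\widetilde{\SL}_2(E)\to\SL_2(E)$ is therefore a continuous group homomorphism on $\SL_2(F)$. Using the semidirect product presentation $\widetilde{\GL}_2(E)=\widetilde{\SL}_2(E)\rtimes E^\times$, one then combines this with the tautological splitting over $F^\times\subset E^\times$; the compatibility with the conjugation action of $F^\times$ on $\SL_2(F)$ is another instance of the same vanishing.

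For Part (2), the $\mu_2$-cover cannot be expected to split, and divisibility of $\C^\times$ must be exploited. I would realise $D_F=E\oplus Ej$ with $j^2=\tau\in F^\times$ a non-norm and $jxj^{-1}=\bar x$ for $x\in E$, use the standard embedding into $\GL_2(E)$ to pull back $\sigma$, and decompose $D_F^\times=F^\times\cdot D_F^1$ with $D_F^1$ the compact norm-one subgroup. On the profinite group $D_F^1$, any continuous $\C^\times$-valued 2-cocycle is a coboundary, producing a splitting there. One then chooses an explicit lift of a generator $\varpi_D$ of $D_F^\times$ modulo $F^\times\cdot D_F^1$ and checks, by a direct Hilbert symbol computation, that the two splittings glue to a continuous homomorphism $D_F^\times\to \widetilde{\GL}_2(E)_{\C^\times}$.

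The main obstacle is Part (2). Pairs involving $j$ produce Hilbert symbol values such as $(\tau,x)_E$ for $x\in F^\times$, which are generally non-trivial in $\mu_2$; only after passing to $\C^\times$, where every sign has a square root, can a $1$-cochain be chosen to absorb them. Constructing this $1$-cochain continuously on all of $D_F^\times$ and verifying that it trivialises $\sigma|_{D_F^\times\times D_F^\times}$ is the technical heart of the argument, and it is precisely this step that distinguishes the $\C^\times$-cover behaviour of $D_F^\times$ from the stronger $\mu_2$-splitting available for $\GL_2(F)$.
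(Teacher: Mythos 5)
The statement is cited from \cite{Patel14} and the present paper gives no proof, so your proposal can only be compared with what a correct argument should look like.

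Your treatment of Part~(1) is on the right track: the Kubota $2$-cocycle on $\SL_2(E)$ takes values that are quadratic Hilbert symbols $(x,y)_E$ of matrix entries, and for $g_1, g_2 \in \GL_2(F)$ all these entries lie in $F^\times$, where $(a,b)_E = (a,\Norm_{E/F}(b))_F = (a,b^2)_F = 1$. Thus the cocycle is \emph{identically} trivial on $\GL_2(F)$, not merely cohomologically trivial, and the identity section is a homomorphism. The compatibility with the $E^\times$-action is the same computation. This matches the natural proof.

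Part~(2) has two concrete errors that sink the argument as written. First, $D_F^\times \neq F^\times \cdot D_F^1$: the reduced norm $\Norm_{D_F/F}$ is surjective from $D_F^\times$ onto $F^\times$, vanishes on $D_F^1$, and sends $F^\times$ to $F^{\times 2}$, so $F^\times D_F^1 = \{d : \Norm_{D_F/F}(d) \in F^{\times 2}\}$ has index $[F^\times : F^{\times 2}] \geq 4$ in $D_F^\times$, and the quotient is an elementary abelian $2$-group, not cyclic. Your ``generator $\varpi_D$ modulo $F^\times D_F^1$'' therefore does not exist, and the gluing step is ill-posed. Second, and more seriously, the claim that ``on the profinite group $D_F^1$, any continuous $\C^\times$-valued $2$-cocycle is a coboundary'' is false in general: $H^2_{\mathrm{cont}}(K,\C^\times)$ for $K$ profinite and $\C^\times$ discrete is the direct limit of Schur multipliers of the finite quotients $K/U$, and these are typically nonzero (already $H^2(\Z/n \times \Z/n, \C^\times) \cong \Z/n$). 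Divisibility of $\C^\times$ does not give cohomological triviality of compact groups. Your overall intuition---that the $\mu_2$-class survives but its image in $H^2(D_F^\times, \C^\times)$ dies because one can extract square roots---is the right mechanism, but establishing it requires an explicit computation showing that \emph{this particular} pullback cocycle is a $\C^\times$-coboundary (equivalently, that the $\mu_2$-cover of $D_F^\times$ admits a genuine one-dimensional character), not an appeal to a general vanishing theorem that does not exist.
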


Note that the splittings over $\GL_{2}(F)$ and $D_{F}^{\times}$ in Theorem \ref{splitting theorem} are not unique. 
As there is more than one splitting in each case,
to study the problem of decomposing a representation of $\widetilde{\GL}_{2}(E)_{\C^{\times}}$ restricted to $\GL_{2}(F)$ and $D_{F}^{\times}$, 
we must fix one splitting of each of the subgroups ${\rm GL}_2(F)$ and $D_{F}^{\times}$, 
which are related to each other. 
We make the following working hypothesis, which has been formulated by D. Prasad. 
\begin{workinghypothesis} \label{working hypothesis}
Let $L$ be a quadratic extension of $F$. The sets of splittings
\begin{displaymath}
\xymatrix{ 
     & \widetilde{\GL}_{2}(E)_{\C^{\times}}  \ar[dd]   &  &   & \widetilde{\GL}_{2}(E)_{\C^{\times}} \ar[dd] \\ 
     &    & {\rm and}  & & \\
     \GL_{2}(F) \ar@{^{(}-->}[ruu]^{i} \ar[r] & \GL_{2}(E)   &  & D_{F}^{\times} \ar@{^{(}-->}[ruu]^{j} \ar[r] & \GL_{2}(E) }
\end{displaymath} 
are principal homogeneous spaces over the Pontrjagin dual of $F^{\times}$. 
There is a natural identification between these two sets of splittings in such a way that for any quadratic extension $L$ of $F$, any two embeddings of $L^{\times}$ in $\widetilde{\rm GL}_{2}(E)_{\C^{\times}}$ as in the following diagrams are conjugate in $\widetilde{\GL}_{2}(E)_{\C^{\times}}$. 
\begin{displaymath}
\xymatrix{ 
 &     & \widetilde{\GL}_{2}(E)_{\C^{\times}}  \ar[dd]         &  &   &     & \widetilde{\GL}_{2}(E)_{\C^{\times}} \ar[dd]   \\
 &  &  & {\rm and} & & & \\
L^{\times} \ar@{^{(}-->}[rruu] \ar[r]  &  \GL_{2}(F) \ar@{^{(}-->}[ruu] \ar[r] & \GL_{2}(E)  &  &  L^{\times} \ar@{^{(}-->}[rruu] \ar[r] & D_{F}^{\times} \ar@{^{(}-->}[ruu] \ar[r] & \GL_{2}(E) }
\end{displaymath} 
Here $L^{\times} \hookrightarrow \GL_{2}(F)$ (respectively, $L^{\times} \hookrightarrow D_{F}^{\times}$) are obtained by identifying a suitable maximal torus of $\GL_{2}(F)$ 
(respectively, $D_{F}^{\times}$ viewed as an algebraic group) with ${\rm Res}_{L/F} \mathbb{G}_{m}$. 
\end{workinghypothesis}

\begin{definition}
 A representation of $\widetilde{\GL}_{2}(E)$ (respectively $\widetilde{\GL}_{2}(E)_{\C^{\times}}$) is called genuine if $\mu_{2}$ acts non trivially (respectively $C^{\times}$ acts by identity).
\end{definition}

In particular, a genuine representation does not factor through $\GL_{2}(E)$. 
In  what follows, we always consider genuine representations of the metaplectic group $\widetilde{\GL}_{2}(E)$.
Let $B(E), A(E)$ and $N(E)$ be the Borel subgroup, maximal torus and maximal unipotent subgroup of ${\rm GL}_2(E)$ consisting of all upper triangular matrices, diagonal matrices and upper triangular unipotent matrices respectively. 
Let $B(F), A(F)$ and $N(F)$ denote the corresponding subgroups of ${\rm GL}_2(F)$. 
Let $Z$ be the center of ${\rm GL}_2(E)$ and $\tilde{Z}$ the inverse image of $Z$ in $\widetilde{\GL}_{2}(E)$. 
Note that $\tilde{Z}$ is an abelian subgroup of $\widetilde{\GL}_{2}(E)$ but is not the center of $\widetilde{\GL}_{2}(E)$; the center of $\widetilde{\GL}_{2}(E)$ is $\tilde{Z^2}$, the inverse image of $Z^2 := \{ z^{2} \mid z \in Z \}$. \\

Let $\psi$ be a non-trivial additive character of $E$. 
Note that the metaplectic covering splits when restricted to the subgroup $N(E)$ and hence $\psi$ gives a character of $N(E)$. 
Let $\pi$ be an irreducible admissible genuine representation of $\widetilde{\GL}_{2}(E)$ and $\pi_{N(E), \psi}$, the $\psi$-twisted Jacquet module which is a $\tilde{Z}$-module. 
Let $\omega_{\pi}$ be the central character of $\pi$.
A character of $\tilde{Z}$ appearing in $\pi_{N(E), \psi}$ agrees with $\omega_{\pi}$ when restricted to $\tilde{Z^2}$.
Let $\Omega(\omega_{\pi})$ be the set of genuine characters of $\tilde{Z}$ whose restriction to $\tilde{Z^2}$ agrees with $\omega_{\pi}$. 
We also realize $\Omega(\omega_{\pi})$ as a $\tilde{Z}$-module, i.e. as direct sum of characters in $\Omega(\omega_{\pi})$ with multiplicity one.  
From \cite[Theorem~ 4.1]{GHPS79}, one knows that the multiplicity of a character $\mu \in \Omega(\omega_{\pi})$ in the $\tilde{Z}$-module $\pi_{N(E), \psi}$ is at most one.
Hence $\pi_{N(E), \psi}$ is a $\tilde{Z}$-submodule of $\Omega(\omega_{\pi})$. 
Now we state the main result of this paper.

We abuse notation and write $\widetilde{\GL}_{2}(E)$ for $\widetilde{\GL}_{2}(E)_{\C^{\times}}$. 

\begin{theorem} \label{DP-metaplctic}
Let $\pi_{1}$ be an irreducible admissible genuine representation of $\widetilde{\GL}_{2}(E)$ and let $\pi_2$ be an infinite dimensional irreducible admissible representation of ${\rm GL}_2(F)$. Assume that the central characters $\omega_{\pi_{1}}$ of $\pi_{1}$ and $\omega_{\pi_{2}}$ of $\pi_{2}$ agree on $E^{\times 2} \cap F^{\times}$. 
Fix a non-trivial additive character $\psi$ of $E$ such that $\psi|_{F} = 1$. 
Let $Q = (\pi_{1})_{N(E)}$ be the Jacquet module of $\pi_{1}$. 
Assume that the ``Working Hypothesis \ref{working hypothesis}" holds. 
Then 
\begin{enumerate}
 \item[(A)] Let $\pi_{2} = \Ind_{B(F)}^{\GL_{2}(F)}(\chi)$ be a principal series representation of $\GL_{2}(F)$. 
 Assume that $\Hom_{A(F)} \left( Q, \chi \cdot \delta^{1/2} \right) = 0$. 
 Then
 \[
  \dim \Hom_{{\rm GL}_2(F)} \left( \pi_{1}, \pi_{2} \right) = \dim \Hom_{Z(F)}( (\pi_{1})_{N(E), \psi}, \omega_{\pi_{2}}).
 \]
 \item[(B)] Let $\pi_{1}= \Ind_{\widetilde{B(E)}}^{\widetilde{\GL}_{2}(E)} (\tilde{\tau})$ be a principal series representation of $\widetilde{\GL}_{2}(E)$ and $\pi_2$ a discrete series representation of ${\rm GL}_2(F)$. 
 Let $\pi_2'$ be the finite dimensional representation of $D_{F}^{\times}$ associated to $\pi_{2}$ by the Jacquet-Langlands correspondence. 
 Assume that $\Hom_{\GL_{2}(F)} \left( \Ind_{B(F)}^{\GL_{2}(F)}(\tilde{\tau}), \pi_{2} \right) = 0$. 
 Then
 \[
  \dim \Hom_{{\rm GL}_2(F)} \left( \pi_{1}, \pi_{2} \right) + \dim \Hom_{D_{F}^{\times}} \left( \pi_{1}, \pi_{2}' \right) =  [E^{\times} : F^{\times}E^{\times 2}].
 \]
 \item[(C)] Let $\pi_1$ be an irreducible admissible genuine representation of $\widetilde{\GL}_{2}(E)$ and $\pi_2$ a supercuspidal representation of ${\rm GL}_2(F)$. 
 Let $\pi_{1}'$ be a genuine representation of $\widetilde{\GL}_{2}(E)$ which has the same central character as that of $\pi_{1}$ and as a $\tilde{Z}$-module $(\pi_{1})_{N(E), \psi} \oplus (\pi_{1}')_{N(E), \psi} = \Omega(\omega_{\pi_{1}})$. 
 Let $\pi_2'$ be the finite dimensional representation of $D_{F}^{\times}$ associated to $\pi_2$ by the Jacquet-Langlands correspondence. 
 Then
\[
 \dim \Hom_{{\rm GL}_2(F)} \left( \pi_{1} \oplus \pi_{1}', \pi_{2} \right) + \dim \Hom_{D_{F}^{\times}} \left( \pi_{1} \oplus \pi_{1}', \pi_{2}' \right) =  [E^{\times} : F^{\times}E^{\times 2}].
\] 
\end{enumerate}
\end{theorem}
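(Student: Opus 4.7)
The plan is to treat parts (A), (B), (C) by three distinct but related arguments, each adapted from Prasad's proof in the linear case, with the metaplectic bookkeeping supplied by the Working Hypothesis \ref{working hypothesis}.

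For part (A), I would first apply Frobenius reciprocity to obtain
\[
 \Hom_{\GL_2(F)}\bigl(\pi_1, \Ind_{B(F)}^{\GL_2(F)}(\chi)\bigr) = \Hom_{B(F)}(\pi_1, \chi \cdot \delta^{1/2}).
\]
The next step is to filter $\pi_1$ as a $B(F)$-module using a Kirillov-type short exact sequence
\[
 0 \to V^{(0)} \to \pi_1|_{B(F)} \to Q \otimes \delta^{1/2}|_{A(F)} \to 0,
\]
where $V^{(0)}$ is the Kirillov subspace built from $(\pi_1)_{N(E),\psi}$ and $Q = (\pi_1)_{N(E)}$ is the Jacquet module. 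Applying $\Hom_{B(F)}(-, \chi \delta^{1/2})$, the quotient term vanishes by the hypothesis $\Hom_{A(F)}(Q, \chi \delta^{1/2}) = 0$. The surviving kernel term is computed via the Kirillov model: using $\psi|_F = 1$, a standard Mackey/orbit calculation identifies $\Hom_{B(F)}(V^{(0)}, \chi \delta^{1/2})$ with $\Hom_{Z(F)}((\pi_1)_{N(E),\psi}, \omega_{\pi_2})$.

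For part (B), with $\pi_1 = \Ind_{\widetilde{B(E)}}^{\widetilde{\GL}_2(E)}(\tilde{\tau})$, I would apply Mackey theory to compute both $\Hom_{\GL_2(F)}(\pi_1, \pi_2)$ and $\Hom_{D_F^\times}(\pi_1, \pi_2')$ via the double coset decompositions $\widetilde{B(E)} \backslash \widetilde{\GL}_2(E) / \GL_2(F)$ and $\widetilde{B(E)} \backslash \widetilde{\GL}_2(E) / D_F^\times$. In each case there is a closed orbit plus a family of open orbits whose stabilizers are images of non-split quadratic tori $L^\times$ inside $\widetilde{\GL}_2(E)$. The closed orbit contribution is killed by the vanishing hypothesis on $\Hom_{\GL_2(F)}(\Ind_{B(F)}^{\GL_2(F)}(\tilde{\tau}), \pi_2)$, and the open orbit contributions on the two sides are homs between restrictions to $L^\times$. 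Using the Working Hypothesis to identify the splittings of $L^\times$ into $\widetilde{\GL}_2(E)$ coming from $\GL_2(F)$ and from $D_F^\times$ as conjugate, and invoking Prasad's linear dichotomy for the resulting characters of $L^\times$, the sum collapses to the number of genuine characters of $\tilde{Z}$ in $\Omega(\omega_{\pi_1})$ whose restriction to $Z(F) \cong F^\times$ equals $\omega_{\pi_2}$, which is exactly $[E^\times : F^\times E^{\times 2}]$.

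Part (C) I would handle by a trace/character argument. Supercuspidal $\pi_2$ and its Jacquet-Langlands transfer $\pi_2'$ have matching characters on elliptic regular semisimple sets up to the Jacquet-Langlands sign, and cuspidality kills any parabolic contribution. Expressing the sum of the two Hom-dimensions as an integral of the characters of $\pi_2,\pi_2'$ against the characters of $\pi_1,\pi_1'$ over appropriate Cartan subgroups, the elliptic contributions cancel by this sign identity, leaving only the unipotent (Whittaker) term; the hypothesis $(\pi_1)_{N(E),\psi} \oplus (\pi_1')_{N(E),\psi} = \Omega(\omega_{\pi_1})$ then identifies this term with the count of characters in $\Omega(\omega_{\pi_1})$ restricting to $\omega_{\pi_2}$ on $Z(F)$, again $[E^\times : F^\times E^{\times 2}]$. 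The main obstacle in all three parts is the metaplectic non-uniqueness of the Whittaker functional: $(\pi_1)_{N(E),\psi}$ is a multi-dimensional $\tilde{Z}$-module rather than a line, and coordinating the splittings of $\GL_2(F)$ and $D_F^\times$ inside $\widetilde{\GL}_2(E)$ with this extra torsor of central characters is exactly the role of the Working Hypothesis, without which the sums in (B) and (C) would have no invariant meaning.
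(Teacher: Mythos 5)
Your proposal mirrors the paper's general architecture (Kirillov model for (A), Mackey theory for (B), character theory for (C)), but there are two substantive gaps.

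\textbf{Part (A).} You apply $\Hom_{B(F)}(-,\chi\delta^{1/2})$ to your short exact sequence and assert that the hypothesis $\Hom_{A(F)}(Q,\chi\delta^{1/2})=0$ makes ``the quotient term vanish.'' This is not enough: applying $\Hom$ gives a \emph{long} exact sequence
\[
0 \to \Hom(Q,\cdot) \to \Hom\bigl((\pi_1)_{N(F)},\cdot\bigr) \to \Hom\bigl(\mathcal{S}(F^\times, (\pi_1)_{N(E),\psi}),\cdot\bigr) \to \Ext^1_{A(F)}(Q,\cdot) \to \cdots,
\]
and the needed isomorphism requires $\Ext^1_{A(F)}(Q,\chi\delta^{1/2})=0$ as well. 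The paper supplies this via a separate lemma (for characters of $F^\times$, $\dim\Hom = \dim\Ext^1$) which you do not mention. Moreover, the step you call ``a standard Mackey/orbit calculation'' identifying the kernel with $\Hom_{Z(F)}((\pi_1)_{N(E),\psi},\omega_{\pi_2})$ is precisely the technical heart of the paper's Part~A: one must show that $\mathcal{S}(E^\times, (\pi_1)_{N(E),\psi})_{N(F)} \cong \mathcal{S}(F^\times, (\pi_1)_{N(E),\psi})$ as $A(F)$-modules, which the paper proves by a Fourier-transform argument on $\mathcal{S}(E)$ exploiting $\psi|_F=1$ and integration along the fibers of $E \to E/F$. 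That step cannot be waved off as ``standard.''

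\textbf{Part (C).} Your description misrepresents the mechanism. You propose to ``express the sum of the two Hom-dimensions as an integral of the characters of $\pi_2,\pi_2'$ against the characters of $\pi_1,\pi_1'$'' with elliptic contributions cancelling and a residual ``unipotent (Whittaker) term.'' But $\pi_1\oplus\pi_1'$ is infinite dimensional, its restriction to a compact-mod-center subgroup $\mathcal{K}$ is not finite dimensional, and the putative integral does not converge. The paper instead \emph{transfers} from the already-solved principal series case: it picks a principal series $Ps$ with $(Ps)_{N(E),\psi}\cong(\pi_1\oplus\pi_1')_{N(E),\psi}$ as $\tilde Z$-modules, and proves a metaplectic analog of the Casselman--Prasad theorem (Theorem~\ref{Cass-Prasad-GL2}) showing that $\Theta_{\pi_1\oplus\pi_1'}-\Theta_{Ps}$ extends to a smooth function on all of $\widetilde{\GL}_2(E)$; only then does $(\pi_1\oplus\pi_1'-Ps)|_\mathcal{K}$ become a \emph{finite}-dimensional virtual representation, so the Weyl integration formula and Harish-Chandra's orbital integral formula for the matrix coefficient of $\pi_2$ apply, with the Jacquet--Langlands sign $\Theta_{\pi_2}=-\Theta_{\pi_2'}$ on elliptic elements forcing $m(\Pi_1-\Pi_2,W_2)+m(\Pi_1-\Pi_2,\pi_2')=0$. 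Your sketch omits both the choice of the comparison representation $Ps$ and the smoothness theorem that makes the character calculus legitimate; without these the argument does not go through. (This smoothness theorem in turn rests on the metaplectic Rodier-type result controlling Whittaker coefficients in the local character expansion, which is also not mentioned.)

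Part (B) is essentially the paper's argument, though note there is a single open $\GL_2(F)$-orbit on $\mathbb{P}^1_E$ with stabilizer $E^\times$, not a ``family of open orbits.''
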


The strategy to prove this theorem is similar to that in \cite{Prasad92}. We recall it briefly.
Part (A) of this theorem is proved by looking at the Kirillov model of an irreducible admissible genuine representation of $\widetilde{\GL}_{2}(E)$ and its Jacquet module with respect to $N(F)$. 
Part (B) makes use of Mackey theory. 
For the third part (C), we use a trick of Prasad \cite{Prasad92}, where we `transfer' the results of a principal series representation (from the part (B)) which do not belong to principal series. 
Prasad transfers the results from principal series representations to discreet series representations. 
This is done by using character theory and an analog of a result of Casselman and Prasad \cite[~Theorem 2.7]{Prasad92} for $\widetilde{\GL}_{2}(E)$ proved by this author in Section \ref{Cass-Prasad}.
\begin{acknowledgement}
 The author would like to express his indebtness to Professor Dipendra Prasad. 
 Without his continuous encouragement and help this paper would not have appeared in the present form. The author is also thankful to Professor Sandeep Varma for several helpful discussions.
\end{acknowledgement}

\section{Part A of Theorem \ref{DP-metaplctic}}

Let $\pi_{2} = \Ind_{B(F)}^{{\rm GL}_2(F)}(\chi)$ be a principal series representation of ${\rm GL}_2(F)$ where $\chi$ is a character of $A(F)$. By Frobenius reciprocity \cite[~Theorem 2.28]{BerZel76}, we get 
\[
\begin{array}{lcl}
 \Hom_{{\rm GL}_2(F)}(\pi_{1}, \pi_{2}) & = & \Hom_{{\rm GL}_2(F)}(\pi_{1}, \Ind_{B(F)}^{{\rm GL}_2(F)}(\chi)) \\
                                                  & = & \Hom_{A(F)}((\pi_{1})_{N(F)}, \chi . \delta^{1/2}) 
\end{array}
\]
where $(\pi_{1})_{N(F)}$ is the Jacquet module of $\pi_{1}$ with respect to $N(F)$. We can describe $(\pi_{1})_{N(F)}$ by realising $\pi_1$ in the Kirillov model.
Now depending on whether $\pi_1$ is a supercuspidal representation or not, we consider them separately. 

\subsection{Kirillov model and Jacquet module}
Now we describe the Kirillov model of an irreducible admissible genuine representation $\pi$ of $\widetilde{\GL}_2(E)$. 
Let $l : \pi \rightarrow \pi_{N(E), \psi}$ be the canonical map. 
Let $\mathcal{C}^{\infty}(E^{\times}, \pi_{N(E), \psi})$ denote the space of smooth functions on $E^{\times}$ with values in $\pi_{N(E), \psi}$. Define the Kirillov mapping
\[
\mathtt{K} : \pi \longrightarrow \mathcal{C}^{\infty}(E^{\times}, \pi_{N(E), \psi})
\]
given by $v \mapsto \xi_{v}$ where $\xi_{v}(x) = l \left( \pi \left( \left( \begin{matrix} x & 0 \\ 0 & 1 \end{matrix} \right), 1 \right) v \right)$. 
More conceptually, $\pi_{N(E), \psi}$ is a representation space of $\tilde{Z} \cdot N(E)$, and by Frobenius reciprocity, there exists a natural map
\[
 \pi|_{\tilde{B}(E)} \rightarrow \Ind_{ \tilde{Z} \cdot N(E)}^{\tilde{B}(E)} \pi_{N(E), \psi}.
\]
Since $\tilde{B}(E)/\tilde{Z} \cdot N(E)$ can be identified to $E^{\times}$ sitting as $\left\{ \left( \begin{matrix} e & 0 \\ 0 & 1 \end{matrix} \right) : e \in E^{\times} \right\}$ in $\tilde{B}(E)$, we get a map of $\tilde{B}(E)$-modules:
\[
\pi|_{\tilde{B}(E)} \rightarrow C^{\infty}(E^{\times}, \pi_{N(E), \psi}).
\]

We summarize some of the properties of the Kirillov mapping in the following proposition. 
\begin{proposition} \label{prop:Kirillov model}
\begin{enumerate}
\item If $v' = \pi \left( \left( \begin{matrix} a & b \\ 0 & d \end{matrix} \right), 1 \right) v$ for $v \in \pi$ then
\[
\xi_{v'}(x) = (x, d) \psi(bd^{-1}x) \pi \left( \left( \begin{matrix} d & 0 \\ 0 & d \end{matrix} \right), 1 \right) \xi_{v}(ad^{-1}x).
\]
\item For $v \in \pi$ the function $\xi_{v}$ is a locally constant function on $E^{\times}$ which vanishes outside a compact subset of $E$.
\item The map $\mathtt{K}$ is an injective linear map. 
\item The image $\mathtt{K}(\pi)$ of the map $\mathtt{K}$ contains the space $\mathcal{S}(E^{\times}, \pi_{N(E), \psi})$ of smooth functions on $E^{\times}$ with compact support with values in $\pi_{N(E), \psi}$.
\item The Jacquet module $\pi_{N(E)}$ of $\pi$ is isomorphic to $\mathtt{K}(\pi)/\mathcal{S}(E^{\times}, \pi_{N(E), \psi})$.
\item The representation $\pi$ is supercuspidal if and only if $\mathtt{K}(\pi) = \mathcal{S}(E^{\times}, \pi_{N(E), \psi})$.
\end{enumerate}
\end{proposition}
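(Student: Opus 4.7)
The plan is to adapt to the two-fold cover the classical construction of the Kirillov model for $\GL_2$ over a $p$-adic field (as in Jacquet--Langlands and Bernstein--Zelevinsky), with the uniqueness of the $\psi$-Whittaker functional for $\widetilde{\GL}_2(E)$ from \cite{GHPS79} invoked at the critical step. Claim (1) is a direct cocycle computation: lift the factorization
\[
\begin{pmatrix} a & b \\ 0 & d \end{pmatrix}\begin{pmatrix} x & 0 \\ 0 & 1 \end{pmatrix} \;=\; \begin{pmatrix} d & 0 \\ 0 & d \end{pmatrix}\begin{pmatrix} 1 & bd^{-1} \\ 0 & 1 \end{pmatrix}\begin{pmatrix} ad^{-1}x & 0 \\ 0 & 1 \end{pmatrix}
\]
to $\widetilde{\GL}_2(E)$; the Kubota cocycle rearrangement of diagonal factors yields the Hilbert symbol $(x,d)$, the unipotent factor contributes $\psi(bd^{-1}x)$ via the $\psi$-equivariance of $l$, and the central $\mathrm{diag}(d,d)$ remains. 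Claim (2) is the usual argument: choose a small compact open $N_0 \subset N(E)$ with $\pi(N_0)v = v$; for $|x|$ large, $\mathrm{diag}(x,1) N_0 \mathrm{diag}(x,1)^{-1}$ is a unipotent subgroup on which $\psi$ is non-trivial, and averaging forces $\xi_v(x) = 0$. Local constancy is automatic from the smoothness of $\pi$ and (1).

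For claim (3), let $V_0 = \ker \mathtt{K}$. By (1), $V_0$ is a $\tilde{B}(E)$-submodule of $\pi$ contained in $\ker l$. Under the Frobenius adjunction
\[
\Hom_{\tilde{Z} N(E)}(\pi, \pi_{N(E),\psi}) \;\cong\; \Hom_{\tilde{B}(E)}\bigl(\pi, \Ind_{\tilde{Z} N(E)}^{\tilde{B}(E)} \pi_{N(E),\psi}\bigr),
\]
$\mathtt{K}$ is the image of the canonical projection. Injectivity then follows from the irreducibility of $\pi$, the uniqueness-up-to-one of each genuine character of $\tilde{Z}$ in $\pi_{N(E),\psi}$ (Theorem~4.1 of \cite{GHPS79}), and the fact that any nonzero $\tilde{B}(E)$-submodule of an irreducible admissible representation must have nonzero image in $\pi_{N(E),\psi}$ after translating by some $\mathrm{diag}(x,1)$.

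For (4), $\mathtt{K}(\pi)$ is $\tilde{B}(E)$-stable and the $N(E)$-action on $\mathtt{K}(\pi)$ is $\pi(n_b)\xi(x) = \psi(bx)\xi(x)$ by (1); hence $(1 - \psi(bx))\xi(x) \in \mathtt{K}(\pi)$ for every $\xi \in \mathtt{K}(\pi)$ and $b \in E$. A standard partition-of-unity argument on $E^{\times}$ using suitable characters $\psi(b_i x)$, combined with the surjectivity of $l : \pi \to \pi_{N(E),\psi}$, expresses any $f \in \mathcal{S}(E^{\times}, \pi_{N(E),\psi})$ as a finite sum of such functions. The same computation shows that the span $\pi(N(E)) := \langle \pi(n)v - v \rangle$ is mapped under $\mathtt{K}$ exactly onto $\mathcal{S}(E^{\times}, \pi_{N(E),\psi})$, so $\mathtt{K}$ descends to an isomorphism $\pi_{N(E)} = \pi/\pi(N(E)) \xrightarrow{\sim} \mathtt{K}(\pi)/\mathcal{S}(E^{\times}, \pi_{N(E),\psi})$, proving (5); claim (6) is then immediate, as $\pi$ is supercuspidal iff $\pi_{N(E)} = 0$.

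The main obstacle is the injectivity step (3): in the metaplectic setting the multiplicity-one property of the $\psi$-Whittaker functional is a non-trivial theorem rather than a direct computation, and one must track the Kubota cocycle carefully throughout the $\tilde{B}(E)$-invariance argument to conclude that a vector killed by $\mathtt{K}$, together with its $A_1(E)$-orbit, really generates no Whittaker data. Once (3) is in place, the other assertions are essentially formal, with (4) reducing to a localization lemma on $E^{\times}$ that is unchanged from the linear case.
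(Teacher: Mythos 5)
Your overall approach—adapt the classical $\GL_2$ Kirillov-model arguments (Godement, Jacquet--Langlands, Prasad--Raghuram) to the two-fold cover, invoking the uniqueness of Whittaker functionals from \cite{GHPS79}—is exactly what the paper does; the paper simply cites Lemmas~2 and~3 of \cite{God70} for parts (2)--(3) and \cite[Theorem~3.1]{PrRa2000} for parts (4)--(6), while you fill in an outline.

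Two things to fix. First, the factorization you display for part (1) multiplies in the wrong order: since $\xi_{v'}(x)=l\bigl(\pi(\mathrm{diag}(x,1),1)\,\pi(g,1)v\bigr)$, you must factor $\mathrm{diag}(x,1)\cdot g$, not $g\cdot\mathrm{diag}(x,1)$. The correct identity is
\[
\begin{pmatrix} x & 0 \\ 0 & 1 \end{pmatrix}\begin{pmatrix} a & b \\ 0 & d \end{pmatrix}
=\begin{pmatrix} d & 0 \\ 0 & d \end{pmatrix}\begin{pmatrix} 1 & bd^{-1}x \\ 0 & 1 \end{pmatrix}\begin{pmatrix} ad^{-1}x & 0 \\ 0 & 1 \end{pmatrix},
\]
whose unipotent factor carries the $x$-dependence and produces $\psi(bd^{-1}x)$; with your ordering the unipotent factor is $\mathrm{diag}$-independent and you would get $\psi(bd^{-1})$, contradicting the stated formula. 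Second, in part (3) the clause ``any nonzero $\tilde{B}(E)$-submodule of an irreducible admissible representation must have nonzero image in $\pi_{N(E),\psi}$ after translating by some $\mathrm{diag}(x,1)$'' is presented as an ingredient, but it \emph{is} the content of the injectivity lemma: once one observes from part (1) that $\ker\mathtt{K}$ is a $\tilde{B}(E)$-submodule contained in $\ker l$, showing that such a submodule vanishes is the real work (Godement's Lemma~3, or the Bernstein--Zelevinsky analysis of the restriction to the mirabolic). Asserting it as a known fact makes the argument for (3) circular; you need either to carry out the mirabolic filtration argument in the genuine setting or to cite it explicitly.
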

\begin{proof}
Part 1 follows from the definition. The proofs of part 2 and 3 are verbatim those of Lemma 2 and Lemma 3 in \cite{God70}. The proofs of part 4, 5 and 6 follow from the proofs of the corresponding statements of \cite[~Theorem 3.1]{PrRa2000}.
\end{proof}
Since the map $\mathtt{K}$ is injective, we can transfer the action of $\widetilde{\GL}_{2}(E)$ on the space of $\pi$ to $\mathtt{K}(\pi)$ using the map $\mathtt{K}$. The realization of the representation $\pi$ on the space $\mathtt{K}(\pi)$ is called the Kirillov model, on which the action of $\widetilde{B(E)}$ is explicitly given by part 1 in Proposition \ref{prop:Kirillov model}. It is clear that $\mathcal{S}(E^{\times}, \pi_{N(E), \psi})$ is $\widetilde{B(E)}$ stable, which gives rise to the following short exact sequence of $\widetilde{B(E)}$-modules
\begin{equation} \label{Kirillov ses}
0 \rightarrow \mathcal{S}(E^{\times}, \pi_{N(E), \psi}) \rightarrow \mathtt{K}(\pi) \rightarrow \pi_{N(E)} \rightarrow 0.
\end{equation}

\subsection{The Jacquet module with respect to $N(F)$}
In this section, we try to understand the restriction of an irreducible admissible genuine representation $\pi$ of $\widetilde{\GL}_{2}(E)$ to $B(F)$. 
For doing this, we describe the Jacquet module $\pi_{N(F)}$ of $\pi$. 
We utilize the short exact sequence in Equation (\ref{Kirillov ses}) of $\widetilde{B(E)}$-modules arising from the Kirillov model of $\pi$, which is also a short exact sequence of $B(F)$-modules.
By the exactness of the Jacquet functor with respect to $N(F)$, we get the following short exact sequence from Equation (\ref{Kirillov ses}), 
\[
0 \rightarrow \mathcal{S}(E^{\times}, \pi_{N(E), \psi})_{N(F)} \rightarrow \mathtt{K}(\pi)_{N(F)} \rightarrow \pi_{N(E)} \rightarrow 0.
\]
Let us first describe $\mathcal{S}(E^{\times}, \pi_{N(E), \psi})_{N(F)}$, the Jacquet module of $\mathcal{S}(E^{\times}, \pi_{N(E), \psi})$ with respect to $N(F)$. 
\begin{proposition} \label{restriction jacquet}
There exists an isomorphism  
$\mathcal{S}(E^{\times}, \pi_{N, \psi})_{N(F)} \cong \mathcal{S}(F^{\times}, \pi_{N(E), \psi})$
of $F^{\times}$-modules where $F^{\times}$ acts by its natural action on $\mathcal{S}(F^{\times}, \pi_{N(E), \psi})$.
\end{proposition}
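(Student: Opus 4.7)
The plan is to construct an explicit map $\Phi \colon \mathcal{S}(E^{\times}, \pi_{N(E), \psi}) \to \mathcal{S}(F^{\times}, \pi_{N(E), \psi})$ by restriction of functions, $\xi \mapsto \xi|_{F^{\times}}$, and show that it descends to the required $F^{\times}$-equivariant isomorphism after passing to $N(F)$-coinvariants. Well-definedness is immediate, since $F^{\times}$ is closed in $E^{\times}$ and the restriction of a locally constant, compactly supported function is of the same type. Surjectivity follows from the product decomposition $E = F \oplus F\alpha$ (fix any $\alpha \in E \setminus F$): given $\eta \in \mathcal{S}(F^{\times}, \pi_{N(E), \psi})$, the extension $\xi(x_{1} + x_{2} \alpha) := \eta(x_{1}) \mathbf{1}_{N_{0}}(x_{2})$ for a small compact-open neighbourhood $N_{0}$ of $0$ in $F$ lies in $\mathcal{S}(E^{\times}, \pi_{N(E), \psi})$ and restricts to $\eta$.

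Next, $\Phi$ factors through the $N(F)$-coinvariants. By part (1) of Proposition \ref{prop:Kirillov model}, the element $n_{b} := \left( \left( \begin{matrix} 1 & b \\ 0 & 1 \end{matrix} \right), 1 \right)$ with $b \in F$ acts on the Kirillov model by $(\pi(n_{b})\xi)(x) = \psi(bx)\xi(x)$. Since $\psi|_{F} = 1$ gives $\psi(bx) = 1$ for all $b, x \in F$, the function $(\pi(n_{b}) - 1)\xi$ vanishes on $F^{\times}$, so $\Phi$ descends to a surjection $\bar{\Phi} \colon \mathcal{S}(E^{\times}, \pi_{N(E), \psi})_{N(F)} \to \mathcal{S}(F^{\times}, \pi_{N(E), \psi})$.

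The key step is the injectivity of $\bar{\Phi}$. Because a locally constant function that vanishes on $F^{\times}$ vanishes on an open neighbourhood of it, the kernel of $\Phi$ equals $\mathcal{S}(U, \pi_{N(E), \psi})$ with $U := E^{\times} \setminus F^{\times}$, and one must show that this kernel is contained in the $N(F)$-augmentation submodule. The crucial ingredient is: for every $x \in E \setminus F$ there exists $b \in F$ with $\psi(bx) \neq 1$. Indeed, writing $E = F \oplus F\alpha$, the hypotheses that $\psi$ is non-trivial and $\psi|_{F} = 1$ force $\psi(x_{1} + x_{2}\alpha) = \psi_{0}(c x_{2})$ for some non-trivial character $\psi_{0}$ of $F$ and some $c \in F^{\times}$; hence $b \mapsto \psi(bx) = \psi_{0}(c b x_{2})$ is non-trivial in $b$ whenever $x_{2} \neq 0$, i.e.\ whenever $x \notin F$. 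Given $\xi \in \mathcal{S}(U, \pi_{N(E), \psi})$ with compact support $K \subset U$, choose for each $x \in K$ some $b_{x} \in F$ with $\psi(b_{x} x) \neq 1$; local constancy of $\psi$ makes this inequality persist on a compact-open neighbourhood of $x$, and compactness of $K$ yields a finite partition $K = \bigsqcup_{i} K_{i}$ together with $b_{i} \in F$ such that $\psi(b_{i} \cdot) \neq 1$ on $K_{i}$. Decomposing $\xi = \sum_{i} \xi_{i}$ with $\xi_{i}$ supported in $K_{i}$ and setting $\eta_{i} := \xi_{i} \cdot (\psi(b_{i} \cdot) - 1)^{-1}$ (well-defined and lying in $\mathcal{S}(K_{i}, \pi_{N(E), \psi})$, since $\psi(b_{i} \cdot) - 1$ is locally constant and nowhere zero on $K_{i}$), one obtains $\xi_{i} = (\pi(n_{b_{i}}) - 1)\eta_{i}$, whence $\xi = \sum_{i}(\pi(n_{b_{i}}) - 1)\eta_{i}$ is killed in the $N(F)$-coinvariants.

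Finally, $F^{\times}$-equivariance follows from the Kirillov formula itself: the element $\left( \left( \begin{matrix} a & 0 \\ 0 & 1 \end{matrix} \right), 1 \right)$ with $a \in F^{\times}$ acts by $\xi(x) \mapsto \xi(ax)$ (the Hilbert symbol $(x, 1)$ and the central twist being trivial), which commutes with restriction to $F^{\times}$ and matches the natural translation action on $\mathcal{S}(F^{\times}, \pi_{N(E), \psi})$. The hard part is the injectivity step above: one must simultaneously choose $b_{i}$ that witness non-triviality uniformly on each piece of a suitable compact-open partition of $K$, and verify that dividing by $\psi(b_{i} \cdot) - 1$ returns a function in the Schwartz class, both of which rely essentially on the non-vanishing fact derived from $\psi|_{F} = 1$ together with $\psi \neq 1$.
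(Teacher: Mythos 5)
Your proof is correct and takes a genuinely different route from the paper. The paper (crediting the argument to D.~Prasad) embeds $\mathcal{S}(E^{\times})\hookrightarrow\mathcal{S}(E)$ and Fourier-transforms, converting the twisting action $(n\cdot f)(x)=\psi(nx)f(x)$ into the translation action on $\mathcal{S}(E)$; it then invokes the standard identification $\mathcal{S}(E)_{N(F)}\cong\mathcal{S}(F)$ via integration along the fibres of $E\twoheadrightarrow E/F\cong F$, and proves a commutative square ${\rm Res}\circ\mathcal{F}_{\psi}^{-1}=\mathcal{F}_{\psi_{\sqrt d}}^{-1}\circ I$ to transfer this back to the multiplicative picture. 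You instead stay entirely on the multiplicative side: you observe directly that the kernel of ${\rm Res}\colon\mathcal{S}(E^{\times},V)\to\mathcal{S}(F^{\times},V)$ is $\mathcal{S}(E^{\times}\smallsetminus F^{\times},V)$, then use the non-degeneracy of $b\mapsto\psi(bx)$ for $x\notin F$ together with a finite compact-open partition of the support to write any such function as $\sum_i(\pi(n_{b_i})-1)\eta_i$, i.e.\ as an element of the $N(F)$-augmentation submodule. What you gain is self-containedness: you never leave $\mathcal{S}(E^{\times})$, so you sidestep the (genuine, if routine) point that the inclusion $\mathcal{S}(E^{\times})\hookrightarrow\mathcal{S}(E)$ must induce an injection on $N(F)$-coinvariants, which the paper's argument implicitly relies on via exactness of the Jacquet functor applied to $0\to\mathcal{S}(E^{\times})\to\mathcal{S}(E)\to\mathbb{C}_{0}\to 0$. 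What the paper's route buys is brevity once the Fourier-theoretic machinery is granted, and it exhibits the restriction map as dual to the fibre-integration map, a conceptually pleasant statement. One small clean-up in your write-up: in the surjectivity step the extension $\xi(x_{1}+x_{2}\alpha)=\eta(x_{1})\mathbf{1}_{N_{0}}(x_{2})$ requires $\eta$ to be first extended by zero from $F^{\times}$ to $F$ (harmless, since ${\rm supp}\,\eta$ is compact in $F^{\times}$), and one should note that ${\rm supp}\,\eta\times N_{0}$ avoids $0\in E$ automatically because ${\rm supp}\,\eta$ is bounded away from $0$ in $F$.
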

The Proposition \ref{restriction jacquet} follows from the proposition below. The author thanks Professor Prasad for suggesting the proof below. 
\begin{proposition} 
Let $\mathcal{S}(E^{\times})$ be a representation space for $N \cong E$ with the action of $N$ given by $(n \cdot f)(x) = \psi(nx) f(x)$ for all $x \in E^{\times}$ where $\psi$ is a non-trivial additive character of $E$ such that $\psi|_{F} =1$. 
Then the restriction map
\begin{equation} \label{restriction map}
\mathcal{S}(E^{\times}) \longrightarrow \mathcal{S}(F^{\times})
\end{equation}
gives the Jacquet module, i.e. the above map realizes $\mathcal{S}(E^{\times})_{N(F)}$ as $\mathcal{S}(F^{\times})$.
\end{proposition}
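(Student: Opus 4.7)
The plan is to establish two things: that the restriction map $R \colon \mathcal{S}(E^\times) \to \mathcal{S}(F^\times)$ is surjective, and that its kernel equals the subspace $\mathcal{S}(E^\times)(N(F))$ spanned by vectors of the form $n \cdot f - f$ with $n \in F$ and $f \in \mathcal{S}(E^\times)$. Together these identify $\mathcal{S}(E^\times)_{N(F)}$ with $\mathcal{S}(F^\times)$. Note that $R$ automatically descends to the Jacquet quotient: for $n \in F$ and $x \in F^\times$, the hypothesis $\psi|_F = 1$ gives $(n \cdot f - f)(x) = (\psi(nx) - 1)\, f(x) = 0$. Surjectivity is the easy half, since $F^\times$ sits as a closed subgroup of $E^\times$ and every element of $\mathcal{S}(F^\times)$ can be extended to $\mathcal{S}(E^\times)$, for instance by a bump-function construction in a transversal direction.

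The main obstacle is the reverse inclusion $\ker R \subseteq \mathcal{S}(E^\times)(N(F))$. The linchpin is the character-theoretic observation that for every $x \in E \setminus F$, the character $\psi_x \colon F \to \C^\times$, $n \mapsto \psi(nx)$, is nontrivial. Indeed, since $[E:F] = 2$ and $x \notin F$, one has the $F$-vector space decomposition $E = F \oplus Fx$; triviality of $\psi_x$ combined with $\psi|_F = 1$ would therefore force $\psi$ to vanish on all of $E$, contradicting its nontriviality.

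Given this, for $g \in \ker R$ the support $K$ is compact in $E^\times \setminus F^\times$. For each $x \in K$ one chooses $n_x \in F$ with $\psi(n_x x) \neq 1$ and, using local constancy of $\psi$, a compact open neighborhood $U_x \subseteq E^\times \setminus F^\times$ of $x$ on which $\psi(n_x \cdot)$ equals the constant $\psi(n_x x)$. Extracting a finite subcover $U_{x_1}, \dots, U_{x_r}$ of $K$, a smooth partition of unity decomposes $g = \sum_i g_i$ with $g_i \in \mathcal{S}(E^\times)$ supported in $U_{x_i}$. On the support of $g_i$ the denominator $\psi(n_{x_i} \cdot) - 1$ is a nonzero constant, so $f_i := g_i / (\psi(n_{x_i} \cdot) - 1) \in \mathcal{S}(E^\times)$ (extended by zero off $U_{x_i}$) satisfies $g_i = n_{x_i} \cdot f_i - f_i$. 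Summing gives $g \in \mathcal{S}(E^\times)(N(F))$, and since $R$ is visibly $F^\times$-equivariant for the natural actions on both sides, the induced map is the desired isomorphism of $F^\times$-modules.
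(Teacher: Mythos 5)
Your proof is correct, and it is genuinely different from the one in the paper. The paper passes through the Fourier transform $\mathcal{F}_\psi\colon\mathcal{S}(E)\to\mathcal{S}(E)$: it observes that $\mathcal{F}_\psi$ carries the multiplicative $N$-action $(n\cdot f)(x)=\psi(nx)f(x)$ on $\mathcal{S}(E^\times)$ to the translation action $(n\cdot g)(x)=g(x+n)$ on its image in $\mathcal{S}(E)$, identifies the $N(F)$-coinvariants of the translation action with $\mathcal{S}(F)$ by integration along the fibers of $\phi(e)=\frac{e-\bar e}{2\sqrt d}$, and then proves that the square formed by restriction, integration along fibers, and the Fourier transforms $\mathcal{F}_\psi$ and $\mathcal{F}_{\psi_{\sqrt d}}$ commutes, which transports the Jacquet-module computation back to the restriction map. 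Your argument instead works with the definition of the Jacquet quotient directly: you show $R$ is surjective, that $\mathcal{S}(E^\times)(N(F))\subseteq\ker R$ because $\psi|_F=1$, and you get the reverse inclusion $\ker R\subseteq\mathcal{S}(E^\times)(N(F))$ by observing that for each $x\in E\setminus F$ the character $n\mapsto\psi(nx)$ of $F$ is nontrivial (since $E=F\oplus Fx$ and $\psi$ is nontrivial with $\psi|_F=1$), choosing $n_x$ with $\psi(n_x x)\ne 1$, and doing a finite partition of unity on the compact support of $g\in\ker R$ to exhibit $g$ as a finite sum of elements $n_{x_i}\cdot f_i-f_i$. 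This is the standard ``Jacquet's lemma''-style device of making $\psi(n\cdot)-1$ invertible on a small open set, and it is more elementary and self-contained than the Fourier-theoretic route; the paper's proof, by contrast, makes the $F^\times$-equivariance and the role of the character $\psi$ transparent at once through the commuting square. Both proofs are sound, and yours would serve as a clean alternative.
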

\begin{proof}
Note that $\mathcal{S}(E^{\times}) \hookrightarrow \mathcal{S}(E)$. 
For a fixed Haar measure $dw$ on $E$, we define the Fourier transform $\mathcal{F}_{\psi} : \mathcal{S}(E) \rightarrow \mathcal{S}(E)$ with respect to the character $\psi$ by
\[
\mathcal{F}_{\psi}(f)(z) := \int_{E} f(w) \psi(zw) \, dw.
\]
As is well known, $\mathcal{F}_{\psi} : \mathcal{S}(E) \rightarrow \mathcal{S}(E)$ is an isomorphism of vector spaces, and the image of $\mathcal{S}(E^{\times})$ can be identified with those functions in $\mathcal{S}(E)$ whose integral on $E$ is zero. 
The Fourier transform takes the action of $N(E)$ on $\mathcal{S}(E^{\times})$ to the restriction of the action of $N(E)$ on $\mathcal{S}(E)$ given by $(n \cdot f) (x) = f(x+n)$. 
Here we have identified $N(E)$ with $E$. 
Note that the maximal quotient of $\mathcal{S}(E)$ on which $N(F)$ acts trivially ($N(F)$ acting by translation on $\mathcal{S}(E)$) can be identified with $\mathcal{S}(F)$ by integration along the fibers (defined below) of the mapping $\phi : E \rightarrow F$ given by $\phi(e) = \frac{e - \bar{e}}{2 \sqrt{d}}$ if $E = F(\sqrt{d})$. 
Note that $\phi(z_1) = \phi(z_2)$ for $z_1, z_2 \in E$ if and only if $z_1 - z_2 \in F$.
We define the integration along the fibers of the map $\phi : E \rightarrow F$, to be denoted by $I$, from  $\mathcal{S}(E) \rightarrow \mathcal{S}(F)$ as follows:
\[
I(f)(y) := \int_{F} f(x + \sqrt{d} y) \, dx \text{ for all } y \in F.
\] 
Clearly $I(f)$ belongs to $\mathcal{S}(F)$. 
Note that $\psi_{\sqrt{d}} = \psi_{\sqrt{d}}|_{F} : x \mapsto \psi(\sqrt{d}x)$ is a non-trivial character of $F$. 
The proposition will follow if we prove the commutativity of the following diagram:
\begin{displaymath}
\xymatrix{ 
\mathcal{S}(E) \ar[r]^{\mathcal{F}_{\psi}} \ar[d]_{{\rm Res}} & \mathcal{S}(E) \ar[d]^{I} \\
\mathcal{S}(F) \ar[r]^{\mathcal{F}_{\psi_{\sqrt{d}}}} & \mathcal{S}(F)   }
\end{displaymath} 
where $\mathcal{F}_{\psi}$ (respectively, $\mathcal{F}_{\psi_{\sqrt{d}}}$) is the Fourier transform on $\mathcal{S}(E)$ (respectively, $\mathcal{S}(F)$) with respect to the character $\psi$ (respectively, $\psi_{\sqrt{d}} =(\psi_{\sqrt{d}})|_{F}$), ${\rm Res}$ denotes the restriction of functions from $E$ to $F$ and $I$ denotes the integration along the fibers mentioned above. 
Recall that $\mathcal{F}_{\psi_{\sqrt{d}}} : \mathcal{S}(F) \rightarrow \mathcal{S}(F)$ is defined by 
\[
\mathcal{F}_{\psi_{\sqrt{d}}} (\phi) (x) := \int_{F} \phi(y) \psi_{\sqrt{d}}(xy) dy = \int_{F} \phi(y) \psi(\sqrt{d} xy) dy \text{ for all } x \in F.
\]

We prove that the above diagram is commutative. 
Let $f \in \mathcal{S}(E)$. 
We want to show that $I \circ \mathcal{F}_{\psi} (f) (y) = \mathcal{F}_{\psi_{\sqrt{d}}} \circ {\rm Res}(f) (y)$ for all $y \in F$. 
We write an element of $E$ as $x + \sqrt{d}y$ with $x, y \in F$. 
We choose a measure $dx$ on $F$ which is self dual with respect to $\psi_{\sqrt{d}}$ in the sense that $\mathcal{F}_{\psi_{\sqrt{d}}} (\mathcal{F}_{\psi_{\sqrt{d}}}(\phi)) (x) = \phi(-x)$ for all $\phi \in \mathcal{S}(F)$ and $x \in F$. 
We identify $E$ with $F \times F$ as a vector space. 
Consider the product measure $dx \, dy$ on $E = F \times F$. 
Using Fubini's theorem we have $\int_{F} \int_{F} \phi(z_{2}) \psi_{\sqrt{d}}(xz_{2}) dz_{2} \, dx = \mathcal{F}_{\psi_{\sqrt{d}}} ( \mathcal{F}_{\psi_{\sqrt{d}}} (\phi))(0) = \phi(0)$  for $\phi \in \mathcal{S}(F)$.
Therefore
\begin{displaymath}
\begin{array}{lcl}
I \circ \mathcal{F}_{\psi} (f) (y) & = & \int_{F} \mathcal{F}_{\psi}(f)(x+\sqrt{d}y)  dx\\
                                               & = & \int_{F} \int_{E=F \times F} f(z_{1} + \sqrt{d} z_{2}) \psi((x + \sqrt{d}y)(z_{1}+ \sqrt{d}z_{2})) dz_{1} \, dz_{2} \, dx \\
                                               & =& \int_{F}  \int_{F} \int_{F} f(z_{1} + \sqrt{d} z_{2}) \psi_{\sqrt{d}} (yz_{1} + x z_{2}) dz_{1} \, dz_{2} \, dx \\
                                               & =& \int_{F} \left( \int_{F} \int_{F} f(z_{1} + \sqrt{d} z_{2}) \psi_{\sqrt{d}} (xz_{2}) dz_{2} \, dx \right) \psi_{\sqrt{d}}(yz_{1}) dz_{1} \\
                                               &=& \int_{F} f(z_{1}) \psi_{\sqrt{d}}(yz_{1}) dz_{1}  \\
                                               &=&  \mathcal{F}_{\psi_{\sqrt{d}}} \circ {\rm Res}(f) (y).
\end{array} 
\end{displaymath}  
This proves the commutativity of the above diagram. 
\end{proof}

\subsection{Completion of the proof of Part A}
First we consider the case when $\pi_{1}$ is a supercuspidal representation of $\widetilde{\GL}_{2}(E)$. 
Then one knows that the functions in the Kirillov model for $\pi_1$ have compact support in $E^{\times}$ and one has $\pi_{1} \cong  \mathcal{S}(E^{\times}, (\pi_{1})_{N(E), \psi})$ as $\widetilde{B(E)}$ modules by Proposition \ref{prop:Kirillov model}. 
Now using Proposition \ref{restriction jacquet} we get the following:
\[
\begin{array}{lcl}
\Hom_{{\rm GL}_2(F)}(\pi_{1}, \pi_{2}) & = & \Hom_{A(F)} \left( (\pi_{1})_{N(F)}, \chi \cdot \delta^{1/2} \right) \\
                        & = & \Hom_{A(F)} \left( \mathcal{S}(E^{\times}, (\pi_{1})_{N(E), \psi})_{N(F)}, \chi \cdot \delta^{1/2} \right) \\ 
                        & = & \Hom_{A(F)} \left( \mathcal{S}(F^{\times}, (\pi_{1})_{N(E), \psi}), \chi \cdot \delta^{1/2} \right) \\ 
\end{array}
\]
Since $\mathcal{S}(F^{\times}, (\pi_{1})_{N, \psi}) \cong \ind_{Z(F)}^{A(F)}(\pi_{1})_{N(E), \psi}$ as $A(F)$-modules, by Frobenius reciprocity \cite[~Proposition 2.29]{BerZel76}, we get the following:
\[
\begin{array}{lcl}
\Hom_{{\rm GL}_2(F)}(\pi_{1}, \pi_{2}) & = & \Hom_{A(F)} \left( \ind_{Z(F)}^{A(F)}(\pi_{1})_{N(E), \psi}, \chi \cdot \delta^{1/2} \right) \\
                        & = & \Hom_{Z(F)} \left( (\pi_{1})_{N(E), \psi}, (\chi \cdot \delta^{1/2})|_{Z(F)} \right) \\
                        & = & \Hom_{Z(F)} \left( (\pi_{1})_{N(E), \psi}, \omega_{\pi_{2}} \right).
\end{array}
\]
This proves part A of Theorem \ref{DP-metaplctic} for $\pi_1$ a supercuspidal representation.

Now we consider the case when $\pi_1$ is not a supercuspidal representation of $\widetilde{\GL}_{2}(E)$. 
Then from Equation (\ref{Kirillov ses}) we get the following short exact sequence of $A(F)$-modules
\[
 0 \rightarrow \mathcal{S}(F^{\times}, (\pi_{1})_{N(E), \psi}) \rightarrow (\pi_{1})_{N(F)} \rightarrow Q \longrightarrow 0.
\]
Now applying the functor $\Hom_{A(F)}(-, \chi . \delta^{1/2})$, we get the following long exact sequence 
\[
\begin{array}{llll}
0 & \rightarrow \Hom_{A(F)} \left( Q, \chi . \delta^{1/2} \right) & \rightarrow \Hom_{A(F)} \left( (\pi_{1})_{N(F)}, \chi . \delta^{1/2} \right) \\
  & \rightarrow \Hom_{A(F)} \left( \mathcal{S}(F^{\times}, (\pi_{1})_{N(E), \psi}), \chi . \delta^{1/2} \right) & \rightarrow \Ext^{1}_{A(F)} \left( Q, \chi . \delta^{1/2} \right) \\
  & \rightarrow \cdots
\end{array}
\]
\begin{lemma}
  $\Hom_{A(F)} \left( Q, \chi . \delta^{1/2} \right) = 0$ if and only if $\Ext^{1}_{A(F)} \left( Q, \chi . \delta^{1/2} \right) = 0$.
\end{lemma}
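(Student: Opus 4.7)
The plan is to reduce the equivalence to a statement about finite-length smooth modules over the torus $A(F) \cong F^{\times} \times F^{\times}$, whose simple objects are characters, and to compute the relevant $\Ext$-groups explicitly via Künneth.

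First, I would verify that $Q = (\pi_{1})_{N(E)}$ is finite-length as an $A(F)$-module: it is the Jacquet module of the admissible representation $\pi_{1}$, hence finite-length over $\widetilde{A(E)}$, and every genuine irreducible representation of the metaplectic torus $\widetilde{A(E)}$ is finite-dimensional, since modulo its center $\widetilde{A(E)^{2}}$ (which has finite index) it is a finite group. Because $A(F)$ is abelian, every irreducible subquotient of $Q$ is then a character of $A(F)$.

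Next I would compute the low-degree $\Ext$-groups between characters of $A(F)$. Each factor $F^{\times} \cong \mathcal{O}_{F}^{\times} \times \Z$ has smooth cohomological dimension one, and for any two characters $\mu_{0}, \nu_{0}$ of $F^{\times}$ both $\Hom_{F^{\times}}(\mu_{0}, \nu_{0})$ and $\Ext^{1}_{F^{\times}}(\mu_{0}, \nu_{0})$ are one-dimensional when $\mu_{0} = \nu_{0}$ and vanish otherwise. Applying the Künneth formula to $A(F) = F^{\times} \times F^{\times}$, for characters $\mu, \nu$ of $A(F)$ one obtains $\dim \Ext^{1}_{A(F)}(\mu, \nu) = 2$ if $\mu = \nu$ and $0$ if $\mu \neq \nu$; crucially, $\Ext^{1}$ vanishes between distinct characters.

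This $\Ext^{1}$-vanishing produces a block decomposition of the category of finite-length smooth $A(F)$-modules with character composition factors: any short exact sequence $0 \to Q' \to Q'' \to \chi \to 0$ with $\chi$ not a composition factor of $Q'$ must split, so an induction on length yields a canonical isotypic decomposition
\[
Q = \bigoplus_{\chi} Q_{\chi},
\]
in which every composition factor of $Q_{\chi}$ equals $\chi$. Writing $W = \chi \cdot \delta^{1/2}$, this reduces the lemma to the assertion that $\Hom_{A(F)}(Q_{W}, W)$ and $\Ext^{1}_{A(F)}(Q_{W}, W)$ both vanish precisely when $Q_{W} = 0$. If $Q_{W} \neq 0$, the top of a composition series yields a surjection $Q_{W} \twoheadrightarrow W$, so $\Hom \neq 0$; and a short induction on the length of $Q_{W}$, using the long exact sequence for $0 \to W \to Q_{W} \to Q_{W}/W \to 0$ together with $\Ext^{1}_{A(F)}(W, W) \cong \C^{2}$, shows $\Ext^{1} \neq 0$. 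The main subtlety is the isotypic decomposition, but once the $\Ext^{1}$-vanishing between distinct characters of $A(F)$ is established, it follows formally.
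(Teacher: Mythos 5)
Your route is genuinely different from the paper's in one important respect. The paper asserts that $Q$ is completely reducible as an $A(F)$-module, hence reduces at once to a single pair of characters; and it then twists by a character of $A(F)$ so that both sides descend to $A(F)/Z(F) \cong F^{\times}$, where it invokes Prasad's one-variable computation $\dim \Hom_{F^{\times}}(\chi_1,\chi_2) = \dim \Ext^{1}_{F^{\times}}(\chi_1,\chi_2)$. You instead assume only finite length and derive a block (isotypic) decomposition from the vanishing of $\Ext^{1}_{A(F)}$ between distinct characters, computed via K\"unneth on $A(F) \cong F^{\times} \times F^{\times}$. This is more robust -- the paper's semisimplicity assertion for the Jacquet module is not obviously justified, and your argument does not need it -- at the cost of having to treat a possibly non-semisimple isotypic block $Q_{W}$.

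That extra step, however, is where a genuine gap appears. Your ``short induction'' on the length of $Q_{W}$, using $0 \to W \to Q_{W} \to Q_{W}/W \to 0$ and $\Ext^{1}_{A(F)}(W,W) \cong \C^{2}$, does not close as stated: in the long exact sequence the map $\Hom(W,W) \to \Ext^{1}(Q_{W}/W,W)$ can absorb the full one-dimensional source, and the connecting map $\Ext^{1}(W,W) \to \Ext^{2}(Q_{W}/W,W)$ can a priori be injective (the target grows with length), so the inductive hypothesis $\Ext^{1}(Q_{W}/W,W) \neq 0$ does not by itself force $\Ext^{1}(Q_{W},W) \neq 0$. The missing ingredient is control of $\Ext^{2}$: since $A(F) \cong F^{\times}\times F^{\times}$ has smooth cohomological dimension $2$, one should use the Euler--Poincar\'e characteristic $EP(-,W) = \dim\Hom - \dim\Ext^{1} + \dim\Ext^{2}$. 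By K\"unneth, $EP(W,W) = 1 - 2 + 1 = 0$, and $EP$ is additive on short exact sequences, so $EP(Q_{W},W) = 0$; hence $\dim\Ext^{1}(Q_{W},W) = \dim\Hom(Q_{W},W) + \dim\Ext^{2}(Q_{W},W) \geq \dim\Hom(Q_{W},W) \geq 1$ whenever $Q_{W}\neq 0$. Equivalently, after reducing to the unramified quotient $\Z^{2}$, for a nonzero unipotent finite-dimensional $\Z^{2}$-module $V$ both $H^{0}(\Z^{2},V)=V^{\Z^{2}}$ and $H^{2}(\Z^{2},V)=V_{\Z^{2}}$ are nonzero and $\dim H^{1} = \dim H^{0} + \dim H^{2}$ by the Koszul complex. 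With that repair your argument is complete and, in fact, somewhat more general than the paper's.
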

\begin{proof}
The space $Q$ is finite dimensional and completely reducible. 
So it is enough to prove the lemma for one dimensional representation, i.e., for characters of $A(F)$. 
Moreover one can regard these representations as representation of $F^{\times}$ (after tensoring by a suitable character of $A(F)$ so that it descends to a representation of $A(F)/Z(F) \cong F^{\times}$). 
Then our lemma follows from the following lemma due to Prasad.
\end{proof}
\begin{lemma}
If $\chi_1$ and $\chi_2$ are two characters of $F^{\times}$, then
\[
\dim \Hom_{F^{\times}}(\chi_1, \chi_2) = \dim \Ext_{F^{\times}}^{1}(\chi_1, \chi_2). 
\] 
\end{lemma}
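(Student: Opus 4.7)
The plan is to identify both sides explicitly by working with the standard cocycle description of $\Ext^1$ in the category of smooth representations of the abelian, totally disconnected group $F^\times$, and then exploit the commutativity of $F^\times$ to force a very rigid structure on 1-cocycles.

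First I would translate the problem into cocycles. An extension $0 \to \chi_2 \to V \to \chi_1 \to 0$ of smooth representations of $F^\times$ can be written on $V = \C v_1 \oplus \C v_2$, where $v_2$ spans $\chi_2$ and $v_1$ projects to a generator of $\chi_1$, via
\[
 x \cdot v_1 = \chi_1(x) v_1 + c(x) v_2, \qquad x \cdot v_2 = \chi_2(x) v_2.
\]
Associativity forces the cocycle identity $c(xy) = \chi_1(y) c(x) + \chi_2(x) c(y)$, and smoothness of $V$ forces $c$ to be locally constant and to vanish on a compact open subgroup. A change of splitting $v_1 \mapsto v_1 + \lambda v_2$ modifies $c$ by the coboundary $\lambda(\chi_2 - \chi_1)$. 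So $\Ext^1_{F^\times}(\chi_1,\chi_2)$ is the quotient of the space of such smooth cocycles by these coboundaries.

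The decisive observation is that $F^\times$ is abelian, so $c(xy) = c(yx)$ yields
\[
 \bigl(\chi_2(x) - \chi_1(x)\bigr)\, c(y) \;=\; \bigl(\chi_2(y) - \chi_1(y)\bigr)\, c(x)
\]
for all $x,y \in F^\times$. Here I would split into two cases. If $\chi_1 \neq \chi_2$, I would choose $x_0$ with $\chi_2(x_0) \neq \chi_1(x_0)$ and immediately read off $c(y) = \mu \bigl(\chi_2(y)-\chi_1(y)\bigr)$ with $\mu = c(x_0)/(\chi_2(x_0)-\chi_1(x_0))$, so every cocycle is a coboundary and $\Ext^1 = 0$, matching $\Hom = 0$. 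If $\chi_1 = \chi_2 = \chi$, the cocycle identity becomes $c(xy) = \chi(y)c(x) + \chi(x)c(y)$, i.e.\ $c/\chi$ is a smooth additive homomorphism $F^\times \to (\C, +)$, while the coboundary space collapses to zero. Thus $\Ext^1_{F^\times}(\chi,\chi)$ is identified with the space of smooth additive homomorphisms $F^\times \to \C$.

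The final step is to show this last space is one-dimensional, which matches $\dim \Hom_{F^\times}(\chi,\chi)=1$. Since $F^\times = \varpi^{\Z} \times \mathcal{O}_F^\times$ with $\mathcal{O}_F^\times$ compact, any smooth homomorphism into $(\C,+)$ sends the compact factor into a compact subgroup of $(\C,+)$, forcing triviality on $\mathcal{O}_F^\times$; the homomorphism is then determined by its value on a uniformizer, giving exactly a one-dimensional space. I expect the only mildly subtle point is verifying smoothness of $c$ (one checks that $c$ vanishes on the compact open stabilizer of $v_1$ and then uses the cocycle identity to propagate local constancy), but this is routine once the setup is in place.
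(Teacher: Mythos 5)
Your proof is correct, and it takes a genuinely different route from the paper. The paper argues structurally: writing $F^\times \cong \mathcal{O}^\times \times \varpi^\Z$ with $\mathcal{O}^\times$ compact, it invokes the degeneration of the Hochschild--Serre spectral sequence (compact groups have no higher smooth cohomology) to get $\Ext^i_{F^\times}(\chi_1,\chi_2) \cong H^i\bigl(\Z, \Hom_{\mathcal{O}^\times}(\chi_1,\chi_2)\bigr)$, then reads off the answer from $H^0(\Z,\C)\cong H^1(\Z,\C)\cong \C$ when the $\varpi^\Z$-action is trivial, and vanishing otherwise. You instead compute $\Ext^1$ by hand with explicit cocycles and exploit commutativity of $F^\times$ to derive the relation $(\chi_2(x)-\chi_1(x))c(y)=(\chi_2(y)-\chi_1(y))c(x)$, which immediately kills $\Ext^1$ when $\chi_1\neq\chi_2$ and reduces the equal-character case to counting smooth additive homomorphisms $F^\times\to(\C,+)$. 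You still use the decomposition $F^\times\cong\mathcal{O}^\times\times\varpi^\Z$, but only at the very end and only to see that the compact factor contributes nothing. Your argument is more elementary and self-contained (no spectral sequence or cohomology of $\Z$), but is tailored to $\Ext^1$ between characters; the paper's approach is shorter once the standard machinery is granted and would compute all the $\Ext^i$ and handle higher-dimensional coefficients with no extra work. One small point worth making explicit in your write-up: smoothness of the cocycle (vanishing on a compact open subgroup) follows from openness of the stabilizer of $v_1$ in the smooth representation $V$, and this is what makes the final count come out to exactly one dimension rather than something larger; you flagged this as the mildly subtle point and your sketch of it is right.
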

\begin{proof}
Let $\mathcal{O}$ be the ring of integers of $F$ and $\varpi$ a uniformizer of $F$. Since $F^{\times} \cong \mathcal{O}^{\times} \times \varpi^{\Z}$ and $\mathcal{O}^{\times}$ is compact, $\Ext_{F^{\times}}^{i}( \chi_{1}, \chi_{2}) = H^{i} \left( \Z, \Hom_{\mathcal{O}^{\times}}( \chi_{1}, \chi_{2}) \right)$. 
If $\Hom_{\mathcal{O}^{\times}} (\chi_{1}, \chi_{2}) = 0$, then the lemma is obvious. 
Hence suppose that $\Hom_{\mathcal{O}^{\times}}( \chi_{1}, \chi_{2}) \neq 0$. 
Then  $\Hom_{\mathcal{O}^{\times}}( \chi_{1}, \chi_{2})$ is certain one dimensional vector space with an action of $\varpi^{\Z}$. 
If the action of $\varpi^{\Z}$ on $\Hom_{\mathcal{O}^{\times}} (\chi_{1}, \chi_{2})$ is non-trivial then $H^{i} (\Z, \Hom_{\mathcal{O}^{\times}} (\chi_{1}, \chi_{2})) = 0$ for all $i \geq 0$. 
Whereas if the action of $\varpi^{\Z}$ on $\Hom_{\mathcal{O}^{\times}} (\chi_{1}, \chi_{2})$ is trivial, then $H^{0}(\Z, \C) \cong H^{1}(\Z, \C) \cong \C$. 
\end{proof}
We have made an assumption that $\Hom_{A(F)}(Q, \chi . \delta^{1/2}) =0$ and hence by the lemma above $\Ext^{1}_{A(F)}(Q, \chi . \delta^{1/2}) = 0$. 
So in this case 
\[
\begin{array}{lcl}
 \Hom_{A(F)}((\pi_{1})_{N(F)}, \chi . \delta^{1/2}) & \cong & \Hom_{A(F)}(\mathcal{S}(F^{\times}, (\pi_{1})_{N(E), \psi}), \chi . \delta^{1/2}) \\
    & = & \Hom_{Z(F)} \left( (\pi_{1})_{N(E), \psi}, \omega_{\pi_{2}} \right).
\end{array}
\]
Hence
\[
\dim \Hom_{\GL_{2}(F)} \left( \pi_{1}, \pi_{2} \right) = \dim \Hom_{Z(F)} \left( (\pi_{1})_{N(E), \psi}, \omega_{\pi_{2}} \right).
 \]
\begin{remark}
As $Q$ is a finite dimensional representation of $\widetilde{A(E)}$, only finitely many characters of $A(F)$ appear in $Q$. 
For a given $\pi_{1}$ there are only finitely many characters $\chi$ such that $\Hom_{A(F)}(Q, \chi.\delta^{1/2}) \neq 0$. 
We are leaving out at most $2 [E^{\times} : E^{\times 2}]$ many principal series representations $\pi_{2}$ for a given $\pi_{1}$. 
Note that $2 [E^{\times} : E^{\times 2}]$ is the maximum possible dimension of $Q$, i.e. the case of a principal series representation $\pi_{1}$.
\end{remark}

\section{Part B of Theorem \ref{DP-metaplctic}} \label{DP-metaplectic-Part2}
In this section, we consider the case when $\pi_1$ is a principal series representation of $\widetilde{\GL}_{2}(E)$ and $\pi_2$ a discrete series representation of ${\rm GL}_{2}(F)$.

Let $\pi_{1} = \Ind_{\widetilde{B(E)}}^{\widetilde{\GL}_{2}(E)}(\tilde{\tau})$, where $(\tilde{\tau}, V)$ is a genuine irreducible representation of $\tilde{A}=\widetilde{A(E)}$. 
The group $\tilde{A}$ sits in the following central extension
\[
 1 \rightarrow A^{2} \times \{ \pm 1 \} \rightarrow \tilde{A} \xrightarrow{p} A/A^{2} \rightarrow 1,
\]
with $A/A^{2} = E^{\times}/E^{\times 2} \times E^{\times}/E^{\times 2}$ and commutator of two elements $\tilde{a_1}$ and $\tilde{a_2}$ of $\tilde{A}$ whose image in $A/A^{2} = E^{\times}/E^{\times 2} \times E^{\times}/E^{\times 2}$ is $a_1 = (e_1, f_1)$ and $a_2 = (e_2, f_2)$, is
\[
 [ \tilde{a_1}, \tilde{a_2}] = (e_1, f_2)(e_2, f_1) \in \{ \pm 1 \} \subset A^{2} \times \{ \pm 1\},
\]
the product of Hilbert symbols $(e_i, f_j)$ of $E$.
Since the Hilbert symbol is a non-degenerate bilinear form on $E^{\times}/E^{\times 2}$, it follows that
\[
 [ \tilde{a_1}, \tilde{a_2}] : A/A^{2} \times A/A^{2} \rightarrow \{ \pm 1 \}
\]
is also a non-degenerate (skew-symmetric) bilinear form. 
Thus $\tilde{A}$ is closely related to the `usual Heisenberg' groups, and its representation theory is closely related to the representation theory of the `usual Heisenberg' groups.
In particular, given a character $\chi : A^{2} \times \{ \pm 1 \} \rightarrow \C^{\times}$
which is non-trivial on $\{ \pm 1 \}$, there exists a unique irreducible representation of $\tilde{A}$ which contains $\chi$. 
Further, for any subgroup $A_0 \subset A/A^{2}$ for which the commutator map 
$[ \tilde{a_1}, \tilde{a_2} ]$, $a_i \in A_{0}$ is identically trivial, and $A_0$ is maximal for this property, $\tilde{A_0}=p^{-1}(A_0)$ is a maximal abelian subgroup of $\tilde{A}$, and the restriction of an irreducible genuine representation $\tilde{\tau}$ of $\tilde{A}$ to $\tilde{A_0}$ contains all characters of $\tilde{A_0}$ with multiplicity one whose restriction to the center $A^{2} \times \{ \pm 1 \}$ is the central character of $\tilde{\tau}$.
Further, $\tilde{\tau} = \Ind_{\tilde{A}_0}^{\tilde{A}} \chi$ where $\chi$ is any character of $\tilde{A}_0$ appearing in $\tilde{\tau}$.
All the assertions here are consequences of the fact that the inner conjugation action of $\tilde{A}$ on $\tilde{A_0}$ is transitive on the set of characters of $\tilde{A_0}$ with a given restriction on $A^{2} \times \{ \pm 1 \}$; this itself is a consequence of the non-degeneracy of the Hilbert symbol.

It follows that the set of equivalence classes of irreducible genuine representations $\tilde{\tau}$ of $\tilde{A}$ is parametrized by the set of characters of $A^2$, i.e. a pair of characters of $E^{\times 2}$. 
%Let $\tilde{A}_0$ be a maximal abelian subgroup of $\tilde{A}$ and $\chi$ be a character of $A_0$ which extends the central character of $\tilde{\tau}$. Then we have $\tilde{\tau} \cong \Ind_{\tilde{A}_0}^{\tilde{A}}(\chi)$.

\begin{lemma} \label{all characters}
 The subgroup $\tilde{Z} \cdot A^{2}$ of $\tilde{A}$ is a maximal abelian subgroup. 
 Let $\tilde{\tau}$ be an irreducible genuine representation of $\tilde{A}$. 
 Then $\tilde{\tau}|_{\tilde{Z}}$ contains all the genuine characters of $\tilde{Z}$ which agree with the central character of $\tau$ when restricted to $\tilde{Z}^{2}$.
\end{lemma}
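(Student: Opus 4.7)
The plan is to first verify that $\tilde{Z}\cdot A^2$ is maximal abelian, and then to apply the general ``Heisenberg'' machinery already set up in the paragraph preceding the lemma.

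\textbf{Step 1: $\tilde{Z}\cdot A^2$ is maximal abelian.} Under $p:\tilde{A}\to A/A^2=E^\times/E^{\times 2}\times E^\times/E^{\times 2}$, the subgroup $\tilde{Z}\cdot A^2$ projects to the diagonal $\Delta=\{(e,e):e\in E^\times/E^{\times 2}\}$. Using the commutator formula
\[
[\tilde{a_1},\tilde{a_2}]=(e_1,f_2)(e_2,f_1),
\]
applied to $(e,e)$ and $(f,f)$, one gets $(e,f)(f,e)=(e,f)^2=1$, so $\Delta$ is isotropic and $\tilde{Z}\cdot A^2$ is abelian. Conversely, if $(a,b)\in A/A^2$ commutes with every diagonal element, then $(a,e)(b,e)=(ab,e)=1$ for every $e$, so by non-degeneracy of the Hilbert symbol $ab\in E^{\times 2}$, i.e.\ $b=a$ in $A/A^2$, placing $(a,b)$ in $\Delta$. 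Hence $\Delta$ is a maximal isotropic subgroup, and $\tilde{Z}\cdot A^2=p^{-1}(\Delta)$ is a maximal abelian subgroup of $\tilde{A}$.

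\textbf{Step 2: Apply the Heisenberg-type dictionary.} By the general statement recalled just before the lemma, the restriction $\tilde{\tau}|_{\tilde{Z}\cdot A^2}$ decomposes as a direct sum, with multiplicity one, of all characters $\chi$ of $\tilde{Z}\cdot A^2$ whose restriction to the center $\widetilde{A^2}=A^2\times\{\pm 1\}$ of $\tilde{A}$ equals the central character $\omega_{\tilde{\tau}}$.

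\textbf{Step 3: Match characters of $\tilde{Z}$ with characters of $\tilde{Z}\cdot A^2$.} The key algebraic input is the identification
\[
\tilde{Z}\cap\widetilde{A^2}=\tilde{Z^2},
\]
which comes from $Z\cap A^2=Z^2$ in $A=E^\times\times E^\times$, together with the fact that a splitting of $A^2$ in $\tilde{A}$ restricts to a splitting of $Z^2$. Now let $\mu$ be a genuine character of $\tilde{Z}$ with $\mu|_{\tilde{Z^2}}=\omega_{\tilde{\tau}}|_{\tilde{Z^2}}$. Since $\tilde{Z}\cdot A^2$ is abelian and $\mu$ agrees with $\omega_{\tilde{\tau}}$ on the intersection $\tilde{Z}\cap\widetilde{A^2}=\tilde{Z^2}$, the pair $(\mu,\omega_{\tilde{\tau}})$ assembles into a well-defined character $\chi$ of $\tilde{Z}\cdot A^2$ with $\chi|_{\tilde{Z}}=\mu$ and $\chi|_{\widetilde{A^2}}=\omega_{\tilde{\tau}}$.

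\textbf{Step 4: Conclude.} By Step 2, this $\chi$ appears in $\tilde{\tau}|_{\tilde{Z}\cdot A^2}$, so $\mu=\chi|_{\tilde{Z}}$ appears in $\tilde{\tau}|_{\tilde{Z}}$, proving the lemma. The only genuinely substantive step is Step 1, which is a direct consequence of the non-degeneracy of the Hilbert symbol; Steps 2--4 are bookkeeping about how characters of the maximal abelian subgroup restrict to $\tilde{Z}$, and the only mild subtlety is keeping straight the various preimages and the intersection $\tilde{Z}\cap\widetilde{A^2}=\tilde{Z^2}$.
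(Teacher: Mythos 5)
Your proof is correct and takes essentially the same approach as the paper: the paper's own proof is a two-sentence reference to the commutator formula and the preceding ``Heisenberg-type'' discussion, and your Steps 1--4 simply carry out those arguments in detail (the projection of $\tilde{Z}\cdot A^2$ to the diagonal of $E^\times/E^{\times 2}\times E^\times/E^{\times 2}$, its maximal isotropy via non-degeneracy of the Hilbert symbol, and the gluing of a character of $\tilde{Z}$ with the central character of $\tilde\tau$ along $\tilde{Z}\cap\widetilde{A^2}=\tilde{Z^2}$).
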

\begin{proof}
By explicit description of commutation relation recalled above it is easy to see that $\tilde{Z} \cdot A^{2}$ is a maximal abelian subgroup of $\tilde{A}$. 
%The rest follows from preceding discussion.
The rest of the statements follow from preceding discussion.
\end{proof}

\begin{proposition} {\rm \cite[~Theorem 2.4]{GPS80}} \label{whittaker models of principal series}
Let $\pi_1 = \Ind_{\widetilde{B(E)}}^{\widetilde{\GL}_{2}(E)}(\tilde{\tau})$ for some irreducible genuine representation $\tilde{\tau}$ of $\tilde{A}$. 
Then 
\[
 (\pi_{1})_{N, \psi} \cong \Omega(\pi_{1}) \cong \tilde{\tau}|_{\tilde{Z}}.
\]
\end{proposition}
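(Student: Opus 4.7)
Plan: The proposition contains two assertions, $(\pi_1)_{N(E),\psi} \cong \tilde{\tau}|_{\tilde{Z}}$ and $\tilde{\tau}|_{\tilde{Z}} \cong \Omega(\pi_1)$, and I would handle them separately.

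The identification $\tilde{\tau}|_{\tilde{Z}} \cong \Omega(\pi_1)$ as $\tilde{Z}$-modules is nearly immediate from the preceding structural analysis. By Lemma~\ref{all characters}, $\tilde{Z}\cdot A^2$ is a maximal abelian subgroup of $\tilde{A}$, so by the discussion preceding that lemma one may write $\tilde{\tau} = \Ind_{\tilde{Z}\cdot A^2}^{\tilde{A}}\chi$ for some genuine character $\chi$ of $\tilde{Z}\cdot A^2$ extending $\omega_{\tilde{\tau}}$. Restricting to the central subgroup $\tilde{Z}$ and using Mackey (with $\tilde{Z}\subset \tilde{Z}\cdot A^2$), one obtains $\tilde{\tau}|_{\tilde{Z}} \cong \bigoplus_{\mu}\mu$, the sum running over genuine characters $\mu$ of $\tilde{Z}$ agreeing with $\omega_{\tilde{\tau}}$ on $\tilde{Z}^2$. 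By definition, this is $\Omega(\pi_1)$, and the dimension count $\dim\tilde{\tau} = [\tilde{A}:\tilde{Z}\cdot A^2] = |\Omega(\omega_{\tilde\tau})|$ confirms that each such character appears exactly once.

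The main content is the first isomorphism $(\pi_1)_{N(E),\psi} \cong \tilde{\tau}|_{\tilde{Z}}$, which I would prove by the Bruhat filtration of the principal series. Using $\widetilde{\GL}_{2}(E) = \widetilde{B(E)} \sqcup \widetilde{B(E)} w \widetilde{B(E)}$ and Mackey, one obtains a short exact sequence of $\widetilde{B(E)}$-modules
\[
0 \longrightarrow \ind_{\tilde{A}}^{\widetilde{B(E)}}(\tilde{\tau}^{w}) \longrightarrow \pi_1|_{\widetilde{B(E)}} \longrightarrow \tilde{\tau} \longrightarrow 0,
\]
whose submodule, supported on the open cell, is canonically identified with $\mathcal{S}(N(E), V_{\tilde{\tau}})$ (the $\tilde{A}$-action combining $\tilde{\tau}^{w}$ on $V_{\tilde{\tau}}$ with conjugation on $N(E)$, while $N(E)$ acts by translation). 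Applying the exact $\psi$-twisted Jacquet functor, the quotient $\tilde{\tau}$ dies because $N(E)$ acts trivially while $\psi$ is non-trivial. For the submodule, a standard Fourier/Whittaker calculation factors $\mathcal{S}(N(E), V_{\tilde{\tau}}) = \mathcal{S}(N(E)) \otimes V_{\tilde{\tau}}$ and yields $\mathcal{S}(N(E))_{N(E),\psi} \cong \C$, so that $\mathcal{S}(N(E), V_{\tilde{\tau}})_{N(E),\psi} \cong V_{\tilde{\tau}}$; since $\tilde{Z}$ is central (hence acts trivially by conjugation on $N(E)$ and $\tilde{\tau}^{w}|_{\tilde{Z}} = \tilde{\tau}|_{\tilde{Z}}$), the residual $\tilde{Z}$-action is precisely $\tilde{\tau}|_{\tilde{Z}}$.

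The main technical obstacle is to set up the Bruhat filtration cleanly in the metaplectic setting: one has to use the canonical splitting of the cover over $N(E)$, verify that the big-cell piece really identifies with the compactly induced space as a $\widetilde{B(E)}$-module, and check that the $w$-conjugation on the center disappears so that the action of $\tilde{Z}$ on the $\psi$-coinvariants really equals $\tilde{\tau}|_{\tilde{Z}}$. Once these compatibilities are in place, the Fourier-theoretic computation of the coinvariants is routine and identical to the linear case.
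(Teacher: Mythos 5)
The paper does not actually prove this proposition: it is cited verbatim to \cite[Theorem 2.4]{GPS80}, and the only local input it relies on is Lemma~\ref{all characters} and the Heisenberg-type analysis of $\tilde A$ that precedes it. Your blind proof is therefore a reconstruction rather than a comparison, and the route you take --- Bruhat filtration of the principal series for the twisted Jacquet module, plus the maximal-abelian-subgroup analysis for $\tilde\tau|_{\tilde Z}\cong\Omega(\pi_1)$ --- is indeed the standard way to prove it and is essentially sound.

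Two imprecisions are worth flagging, one cosmetic and one with a little content. First, $\tilde Z$ is \emph{not} central in $\tilde A$: the center of $\tilde A$ is $\widetilde{A^2}=A^2\times\{\pm1\}$, and $\tilde Z$ is merely an abelian subgroup contained in the maximal abelian $\tilde Z\cdot A^2$. Your Mackey restriction from $\tilde Z\cdot A^2$ to $\tilde Z$ and the dimension count still go through, but the word ``central'' should go. Second, in the big-cell computation you assert $\tilde\tau^{w}|_{\tilde Z}=\tilde\tau|_{\tilde Z}$ ``since $\tilde Z$ is central''; but the center of $\widetilde{\GL}_2(E)$ is only $\widetilde{Z^2}$, so conjugation by a lift $\tilde w$ of $w$ need not fix $\tilde Z$ pointwise --- a priori it acts as $\tilde z\mapsto\eta(z)\tilde z$ for some quadratic character $\eta$ of $Z/Z^2$ determined by the cocycle. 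The conclusion is nonetheless unaffected: by Lemma~\ref{all characters}, $\tilde\tau|_{\tilde Z}$ is multiplicity-free and equals the full packet $\Omega(\omega_{\tilde\tau})$, and twisting by any such $\eta$ merely permutes that set, so $\tilde\tau^{w}|_{\tilde Z}\cong\tilde\tau|_{\tilde Z}$ as $\tilde Z$-modules regardless. If you wanted a cleaner writeup, it would be better to argue directly that $(\pi_1)_{N,\psi}\cong\tilde\tau^{w}|_{\tilde Z}$ and then invoke $\tilde\tau^{w}|_{\tilde Z}\cong\Omega(\omega_{\tilde\tau})\cong\tilde\tau|_{\tilde Z}$, rather than asserting the two restrictions are literally equal. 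With those corrections, the argument is complete and matches the expected proof underlying the cited reference.
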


Now as in \cite{Prasad92}, we use Mackey theory to understand its restriction to ${\rm GL}_2(F)$. We have $\widetilde{\GL}_{2}(E)/\widetilde{B(E)} \cong \mathbb{P}_{E}^{1}$ and this has two orbits under the left action of ${\rm GL}_2(F)$. 
One of the orbits is closed, and naturally identified with $\mathbb{P}_{F}^{1} \cong {\rm GL}_2(F)/B(F)$. 
The other orbit is open, and can be identified with $\mathbb{P}_{E}^{1} - \mathbb{P}_{F}^{1} \cong {\rm GL}_2(F)/E^{\times}$. 
By \index{Mackey theory} Mackey theory, we get the following exact sequence of ${\rm GL}_2(F)$-modules:
\begin{equation} \label{p.s. ses}
0 \rightarrow \ind_{E^{\times}}^{{\rm GL}_2(F)} (\tilde{\tau}'|_{E^{\times}}) \rightarrow \pi_{1} \rightarrow \Ind_{B(F)}^{{\rm GL}_2(F)}(\tilde{\tau}|_{B(F)}\delta^{1/2}) \rightarrow 0,
\end{equation}
where $\tilde{\tau}'|_{E^{\times}}$ is the representation of $E^{\times}$ obtained from the embedding $E^{\times} \hookrightarrow \tilde{A}$ which comes from conjugating the embedding $E^{\times} \hookrightarrow \GL_{2}(F) \hookrightarrow \widetilde{\GL}_{2}(E)$. 
We now identify $E^{\times}$ with its image inside $\tilde{A}$ which is given by $x \mapsto \left( \left( \begin{matrix} x & 0 \\ 0 & \bar{x} \end{matrix} \right), \epsilon(x) \right)$ where $\bar{x}$ is the non-trivial $Gal(E/F)$-conjugate of $x$ and $\epsilon(x) \in \{ \pm 1 \}$. 
Now let $\pi_{2}$ be any irreducible admissible representation of ${\rm GL}_2(F)$. 
By applying the functor $\Hom_{{\rm GL}_2(F)}(-, \pi_{2})$ to the short exact sequence (\ref{p.s. ses}), we get the following long exact sequence:
\begin{equation} \label{p.s. les}
\begin{array}{lllll}
0 & \rightarrow & \Hom_{{\rm GL}_2(F)} [\Ind_{B(F)}^{{\rm GL}_2(F)}(\tilde{\tau}|_{B(F)}\delta^{1/2}), \pi_{2}] & \rightarrow & \Hom_{{\rm GL}_2(F)} [\pi_{1}, \pi_{2}]  \\
  & \rightarrow & \Hom_{{\rm GL}_2(F)} [\ind_{E^{\times}}^{{\rm GL}_2(F)}(\tilde{\tau}'|_{E^{\times}}), \pi_{2}] & \rightarrow & \Ext_{{\rm GL}_2(F)}^{1}[\Ind_{B(F)}^{{\rm GL}_2(F)}(\tilde{\tau}|_{B(F)} \delta^{1/2}), \pi_{2}] \\
  & \rightarrow  & \cdots & &
\end{array}
\end{equation}
From \cite[~Corollary 5.9]{Prasad90} we know that 
\begin{center}
$\Hom_{{\rm GL}_2(F)} [\Ind_{B(F)}^{{\rm GL}_2(F)}(\chi.\delta^{1/2}), \pi_{2}] = 0$ \\
$\Updownarrow$ \\
$\Ext_{{\rm GL}_2(F)}^{1}[\Ind_{B(F)}^{{\rm GL}_2(F)}(\chi.\delta^{1/2}), \pi_{2}] = 0.$
\end{center}
Since $\tilde{\tau}|_{B(F)}$ factors through $T(F)$, which is direct sum of $[E^{\times} : E^{\times 2}]$ characters of $T(F)$, we can use above result of Prasad with $\chi$ replaced by $\tilde{\tau}|_{B(F)}$.  Then from the exactness of (\ref{p.s. les}), it follows that 
\begin{center}
$\Hom_{{\rm GL}_2(F)} [\pi_{1},\pi_{2}] = 0$ \\
$\Updownarrow$\\
$\Hom_{{\rm GL}_2(F)} [\Ind_{B(F)}^{{\rm GL}_2(F)}(\tilde{\tau}|_{B(F)} \delta^{1/2}), \pi_{2}] = 0 \text{ and }
\Hom_{{\rm GL}_2(F)} [\ind_{E^{\times}}^{{\rm GL}_2(F)} (\tilde{\tau}'|_{E^{\times}}),\pi_{2}] = 0.$
\end{center}
Note that the representation $\Ind_{B(F)}^{{\rm GL}_2(F)}(\tilde{\tau}|_{B(F)})$ consists of finitely many principal series representations of ${\rm GL}_2(F)$. 
We have made the assumption that $ \Hom_{{\rm GL}_2(F)} [\Ind_{B(F)}^{{\rm GL}_2(F)}(\tilde{\tau}|_{B(F)}), \pi_{2}] = 0$, it follows that  
\[
 \Ext_{{\rm GL}_2(F)}^{1}[\Ind_{B(F)}^{{\rm GL}_2(F)}(\tilde{\tau}.\delta^{1/2}), \pi_{2}] = 0.
\]
This gives
\[ 
\begin{array}{lll}
 \Hom_{{\rm GL}_2(F)}[\pi_{1}, \pi_{2}] & \cong & \Hom_{{\rm GL}_2(F)} [\ind_{E^{\times}}^{{\rm GL}_2(F)}(\tilde{\tau}'|_{E^{\times}}), \pi_{2}] \\
                         & \cong & \Hom_{E^{\times}} [\tilde{\tau}'|_{E^{\times}}, \pi_{2}|_{E^{\times}}]         
\end{array}
\]
The following lemma describes $\tilde{\tau}'|_{E^{\times}}$.
\begin{lemma}
If we identify $E^{\times}$ with its image $\left\{ \left( \left( \begin{matrix} x & 0 \\ 0 & \bar{x} \end{matrix} \right), \epsilon(x) \right) \mid x \in E^{\times} \right\}$ inside $\tilde{A}$ as above then the subgroup $E^{\times} \cdot \tilde{A^2}$ inside $\tilde{A}$ is a maximal abelian subgroup. 
Moreover, $\tilde{\tau}'|_{E^{\times}}$ contains all the characters of $E^{\times}$ which are same as $\omega_{\tilde{\tau}}|_{E^{\times 2}}$ when restricted to $E^{\times 2}$, where $\omega_{\tilde{\tau}}$ is the central character of $\tilde{\tau}$.
\end{lemma}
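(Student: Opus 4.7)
The plan is to handle the two assertions of the lemma separately, with the maximal-abelian claim first since it underlies the character statement.

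For the maximal abelian claim, I first observe that $\tilde{A^2}$ projects trivially to $A/A^{2}$, so by the commutator formula recalled before Lemma~\ref{all characters} it is contained in the center of $\tilde{A}$. It therefore suffices to check that the image of $E^{\times}$, embedded as $x\mapsto((x,\bar x),\epsilon(x))$, is self-commuting. For two such elements the commutator formula yields $(x,\bar y)_E \cdot (y,\bar x)_E$. The Hilbert symbol is Galois equivariant and takes values in $\{\pm 1\}\subset F$, hence is fixed by $\sigma$: applying $\sigma$ to $(y,\bar x)_E$ gives $(\bar y, x)_E$, and by the symmetry of the symbol this equals $(x,\bar y)_E$. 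The commutator is therefore $(x,\bar y)_E^{2}=1$. For maximality, let $\tilde z\in\tilde A$ commute with every embedded element of $E^{\times}$, and write $(a,b)\in A/A^{2}$ for its image. The vanishing of the commutator reads $(a,\bar x)_E(x,b)_E=1$ for every $x$; the same Galois trick rewrites $(a,\bar x)_E$ as $(x,\bar a)_E$, reducing the condition to $(x,\bar a\cdot b)_E=1$ for all $x$. Non-degeneracy of the Hilbert symbol on $E^{\times}/E^{\times 2}$ then forces $b\equiv\bar a\pmod{E^{\times 2}}$, so $(a,b)$ lies in the image of $E^{\times}$ in $A/A^{2}$, and hence $\tilde z\in E^{\times}\cdot\tilde{A^2}$.

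For the statement about $\tilde\tau'|_{E^{\times}}$, I would invoke the Heisenberg-type theory already developed: since the commutator pairing on $A/A^{2}$ is non-degenerate, the restriction of an irreducible genuine representation $\tilde\tau$ to any maximal abelian subgroup decomposes, with multiplicity one, as the direct sum of all characters that extend the central character $\omega_{\tilde\tau}$. Apply this to the maximal abelian subgroup $E^{\times}\cdot\tilde{A^2}$ produced in the first part. The intersection $E^{\times}\cap\tilde{A^2}$ equals $E^{\times 2}$, since $((x,\bar x),\epsilon(x))$ lies in $\tilde{A^2}$ precisely when $x\in E^{\times 2}$. Consequently, any character of $E^{\times}$ agreeing with $\omega_{\tilde\tau}|_{E^{\times 2}}$ can be combined with $\omega_{\tilde\tau}$ on $\tilde{A^2}$ to produce a well-defined character of the abelian product $E^{\times}\cdot\tilde{A^2}$ extending $\omega_{\tilde\tau}$. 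Restricting such an extension back to $E^{\times}$ recovers the original character, so every such character of $E^{\times}$ occurs in $\tilde\tau|_{E^{\times}}$.

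The main obstacle is the Galois-equivariance step that forces the self-commutator $(x,\bar y)_E(y,\bar x)_E$ to collapse to a square, together with its use in the maximality argument; once these are in place, both assertions follow from the Heisenberg paradigm already in use, with the bookkeeping reduced to the identification $E^{\times}\cap\tilde{A^2}=E^{\times 2}$.
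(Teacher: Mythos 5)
Your proof is correct and supplies exactly the details that the paper compresses into ``from the explicit cocycle description and the non-degeneracy of the quadratic Hilbert symbol, it is easy to verify.'' The Galois-equivariance trick that collapses $(x,\bar y)_E(y,\bar x)_E$ to a square and drives the maximality computation, followed by the gluing argument via $E^{\times}\cap\tilde{A^2}=E^{\times 2}$ and the Heisenberg-type multiplicity-one discussion preceding Lemma \ref{all characters}, is precisely the intended reasoning.
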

\begin{proof}
From the explicit cocycle description and the non-degeneracy of quadratic Hilbert symbol, it is easy to verify that $E^{\times} \cdot \tilde{A^2}$ is a maximal abelian subgroup of $\tilde{A}$. The rest follows from the discussion preceding Lemma \ref{all characters}.
\end{proof}

As $\pi_{2}$ is a discrete series representation, it is not always true (unlike what happens in case of a principal series representation) that any character of $E^{\times}$, whose restriction to $F^{\times}$ is the same as the central character of $\pi_{2}$, appears in $\pi_{2}$. 
Let $\pi_{2}'$ be the finite dimensional representation of $D_{F}^{\times}$ associated to $\pi_{2}$ by the Jacquet-Langlands correspondence. 
Considering the left action of $D_{F}^{\times}$ on $\mathbb{P}_{E}^{1} \cong \widetilde{\GL}_{2}(E)/\widetilde{B(E)}$ induced by $D_{F}^{\times} \hookrightarrow \widetilde{\GL}_{2}(E)$ it is easy to verify that $\mathbb{P}_{E}^{1} \cong D_{F}^{\times} / E^{\times}$. 
Then by Mackey theory, the principal series representation $\pi_{1}$ when restricted to $D_{F}^{\times}$, becomes isomorphic to $\ind_{E^{\times}}^{D_{F}^{\times}}(\tilde{\tau}'|_{E^{\times}})$. 
\[
\begin{array}{lll}
\Hom_{D_{F^{\times}}}[\pi_{1}, \pi_{2}']  & \cong & \Hom_{D_{F^{\times}}}[\ind_{E^{\times}}^{D_{F}^{\times}}(\tilde{\tau}'|_{E^{\times}}), \pi_{2}'] \\
                                 & \cong & \Hom_{E^{\times}}(\tilde{\tau}'|_{E^{\times}}, \pi_{2}'|_{E^{\times}})
\end{array}
\]

In order to prove 
\begin{equation} 
 \dim \Hom_{{\rm GL}_2(F)}[\pi_{1}, \pi_{2}] + \dim \Hom_{D_{F}^{\times}}[\pi_{1}, \pi_{2}'] = [E^{\times} : F^{\times}E^{\times 2}]
\end{equation}
we shall prove 
\begin{equation} \label{V pi2 pi2'}
 \dim \Hom_{E^{\times}} [\tilde{\tau}'|_{E^{\times}}, \pi_{2}|_{E^{\times}}] + \dim \Hom_{E^{\times}}(\tilde{\tau}'|_{E^{\times}}, \pi_{2}'|_{E^{\times}}) = [E^{\times} : F^{\times}E^{\times 2}].
\end{equation}
By Remark 2.9 in \cite{Prasad92}, a character of $E^{\times}$ whose restriction to $F^{\times}$ is the same as the central character of $\pi_{2}$ appears either in $\pi_{2}$ with multiplicity one or in $\pi_{2}'$ with multiplicity one, and exactly one of the two possibilities hold. 
Note that we are assuming that the two embeddings of $E^{\times}$, one via $\GL_{2}(F)$ and other via $D_{F}^{\times}$ are conjugate in $\widetilde{\GL}_{2}(E)$. 
Then the left hand side of Equation (\ref{V pi2 pi2'}) is the same as the number of characters of $E^{\times}$ appearing in $(\tilde{\tau}, V)$ which upon restriction to $F^{\times}$ coincide with the central character of $\pi_{2}$, which equals $\dim \Hom_{F^{\times}}(\tilde{\tau}|_{F^{\times}}, \omega_{\pi_2})$.
We are reduced to the following lemma.
\begin{lemma}
 Let $(\tilde{\tau}, V)$ be an irreducible genuine representation of $\tilde{A}$ and, let $\chi$ be a character of $Z(F) = F^{\times}$ such that $\chi|_{E^{\times 2} \cap F^{\times}} = \tilde{\tau}|_{E^{\times 2} \cap F^{\times}}$. Then 
 \[
  \dim \Hom_{F^{\times}}(\tilde{\tau}, \chi) = [E^{\times} : F^{\times}E^{\times 2}].
 \]
\end{lemma}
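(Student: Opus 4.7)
My plan is to compute $\Hom_{F^{\times}}(\tilde{\tau}, \chi)$ by decomposing $\tilde{\tau}|_{\tilde{Z}}$ into characters of $\tilde{Z}$ (via Lemma \ref{all characters}) and then counting how many of those characters restrict to $\chi$ on $F^{\times}$.

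First, I would record the ambient setup. Since the splitting of $\GL_{2}(F)$ into $\widetilde{\GL}_{2}(E)$ that we have fixed in particular splits $Z(F) \subset Z(E)$, the subgroup $F^{\times} = Z(F)$ lifts canonically into $\tilde{Z}$; hence the statement $\Hom_{F^{\times}}(\tilde{\tau}, \chi) = \Hom_{F^{\times}}(\tilde{\tau}|_{\tilde{Z}}, \chi)$ makes sense. From the commutator formula on $\tilde{A}$ recalled in the paper, the center of $\tilde{A}$ is $\widetilde{A^{2}} = p^{-1}(E^{\times 2} \times E^{\times 2})$, so the unique irreducible genuine representation $\tilde{\tau}$ of $\tilde{A}$ with a prescribed central character has dimension $\sqrt{|A/A^{2}|} = [E^{\times} : E^{\times 2}]$.

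Next I would use Lemma \ref{all characters}: $\tilde{\tau}|_{\tilde{Z}}$ contains every genuine character $\mu$ of $\tilde{Z}$ with $\mu|_{\tilde{Z^{2}}} = \omega_{\tilde{\tau}}$. There are exactly $[\tilde{Z} : \tilde{Z^{2}}] = [E^{\times} : E^{\times 2}]$ such characters, which matches $\dim \tilde{\tau}$ exactly. Therefore each such character occurs with multiplicity one and
\[
\tilde{\tau}|_{\tilde{Z}} \;\cong\; \bigoplus_{\mu} \mu,
\]
where $\mu$ ranges over the genuine characters of $\tilde{Z}$ restricting to $\omega_{\tilde{\tau}}$ on $\tilde{Z^{2}}$. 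Consequently
\[
\dim \Hom_{F^{\times}}(\tilde{\tau}, \chi) \;=\; \#\bigl\{\, \mu \;:\; \mu \text{ genuine},\; \mu|_{\tilde{Z^{2}}} = \omega_{\tilde{\tau}},\; \mu|_{F^{\times}} = \chi \,\bigr\}.
\]

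Finally, I would count these $\mu$'s by an extension argument. The two restriction conditions are imposed on the subgroups $\tilde{Z^{2}}$ and $F^{\times}$ of $\tilde{Z}$, whose intersection is $F^{\times} \cap E^{\times 2}$; the compatibility hypothesis $\chi|_{F^{\times} \cap E^{\times 2}} = \omega_{\tilde{\tau}}|_{F^{\times} \cap E^{\times 2}}$ is exactly what guarantees that these two restrictions glue to a well-defined character on the compositum $F^{\times} \cdot \tilde{Z^{2}}$. The set of extensions of that character to all of $\tilde{Z}$ is a torsor over the Pontrjagin dual of
\[
\tilde{Z}/(F^{\times} \cdot \tilde{Z^{2}}) \;\cong\; E^{\times}/(F^{\times} \cdot E^{\times 2}),
\]
and so has cardinality $[E^{\times} : F^{\times} E^{\times 2}]$, which is the claimed answer.

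The proof is essentially a clean calculation, so I do not expect a serious obstacle; the only point that requires care is to verify that each character $\mu$ in the multiplicity-one decomposition of $\tilde{\tau}|_{\tilde{Z}}$ really is distinct (i.e. that the dimension count is tight), and that the hypothesis on $\chi$ is precisely what is needed to ensure the two prescribed restrictions are compatible on $F^{\times} \cap \tilde{Z^{2}}$. Both are straightforward, the former from comparing $\dim \tilde{\tau}$ with $[\tilde{Z} : \tilde{Z^{2}}]$ and the latter being immediate from the statement.
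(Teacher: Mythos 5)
Your proposal is correct and follows essentially the same route as the paper: decompose $\tilde{\tau}|_{\tilde{Z}}$ into the full set of genuine characters of $\tilde{Z}$ agreeing with $\omega_{\tilde{\tau}}$ on $\tilde{Z^2}$ (each with multiplicity one), then observe that specifying such a character on $F^{\times}$ determines it on $F^{\times}\tilde{Z^2}$, so that the relevant characters form a torsor under the dual of $\tilde{Z}/(F^{\times}\tilde{Z^2}) \cong E^{\times}/F^{\times}E^{\times 2}$. The only cosmetic difference is that you establish multiplicity one by a dimension count ($\dim\tilde{\tau} = [E^{\times}:E^{\times 2}] = $ number of available characters) together with Lemma \ref{all characters}, whereas the paper invokes Proposition \ref{whittaker models of principal series} directly to get $\tilde{\tau}|_{\tilde{Z}} \cong \Omega(\omega_{\tilde{\tau}})$; both are valid and equivalent in this context.
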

\begin{proof}
Note that $E^{\times 2} \cap F^{\times} = Z^{\times 2} \cap F^{\times}$. 
From Proposition \ref{whittaker models of principal series}, $\tilde{\tau}|_{\tilde{Z}} \cong \Omega(\omega_{\pi_{1}})$. 
If a character $\mu \in \Omega(\omega_{\pi_{1}})$ is specified on $F^{\times}$ then it is specified on $F^{\times}E^{\times 2}$. Therefore the number of characters in $\Omega(\omega_{\pi_{1}})$ which agree with $\chi$ when restricted to $F^{\times}$ is equal to $[E^{\times} : F^{\times}E^{\times 2}]$. \qedhere
 \end{proof}

\section{A theorem of Casselman and Prasad} \label{Cass-Prasad}
As mentioned in the introduction, we use results of part (B) involving principal series representation and `transfer' it to the other cases, as stated in part (C) which involves restriction of the two representations.
To make such a transfer possible Prasad used a result which says that if two irreducible representations of $\GL_{2}(E)$ have the same central characters then the difference of their character is a smooth function on $\GL_{2}(E)$.
We will need a similar theorem for $\widetilde{\GL}_{2}(E)$, which we prove in this section.
In order to do this, we recall a variant of a theorem of Rodier which is true for covering groups in general; this variant is proved by the author \cite{Shiv15}. Let us first recall some facts about germ expansions, restricted only to $\widetilde{\SL}_{2}(E)$.

For any non-zero nilpotent orbit in $\mathfrak{sl}_{2}(E)$ there is a lower triangular nilpotent matrix $Y_{a} = \left( \begin{matrix} 0 & 0 \\ a & 0 \end{matrix} \right)$ such that $Y_{a}$ belongs to the nilpotent orbit. 
For a given non-zero nilpotent orbit, the element $a$ is uniquely determined modulo $E^{\times 2}$.
We write $\mathcal{N}_{a}$ for the nilpotent orbit which contains $Y_{a}$.
Thus the set of all non-zero nilpotent orbits is $\{ \mathcal{N}_{a} \mid a \in E^{\times}/E^{\times 2} \}$.

Let $\tau$ be an irreducible admissible genuine representation of $\widetilde{\SL}_{2}(E)$. 
Recall that for an irreducible admissible genuine representation $\tau$ of $\widetilde{\SL}_{2}(E)$, the character distribution $\Theta_{\tau}$ is a smooth function on the set of regular semisimple elements. 
The Harish-Chandra-Howe character expansion of $\Theta_{\tau}$ in a neighbourhood of identity is given  as follows:
\[
\Theta_{\tau} \circ \exp = c_{0}(\tau) + \sum_{a \in E^{\times}/E^{\times 2}} c_{a}(\tau) \cdot \widehat{\mu}_{\mathcal{N}_{a}}
\]
where $c_{0}(\tau), c_{a}(\tau)$ are constants and $\widehat{\mu}_{\mathcal{N}_{a}}$ is the Fourier transform of a suitably chosen $\SL_{2}(E)$-invariant (under the adjoint action) measure on $\mathcal{N}_{a}$. 

Fix a non-trivial additive character $\psi$ of $E$. 
Define a character $\chi$ of $N$ by $\chi \left( \begin{matrix} 1 & x \\ 0 & 1 \end{matrix} \right) = \psi(x)$. 
For $a \in E^{\times}$ we write $\psi_{a}$ for the character of $E$ given by $\psi_{a}(x) = \psi(ax)$.
We write $(N, \psi)$ for the non-degenerate Whittaker datum $(N, \chi)$. 
It can be seen that the set of conjugacy classes of non-degenerate Whittaker data has a set of representatives $\{ (N, \psi_{a}) \mid a \in E^{\times}/E^{\times 2} \}$.

By the proof of the main theorem in \cite{Shiv15}, the bijection between $\{ \mathcal{N}_{a} \mid a \in E^{\times}/E^{\times 2} \}$ and $\{ (N, \psi_{a}) \mid a \in E^{\times}/E^{\times 2} \}$ given by $\mathcal{N}_{a} \leftrightarrow  (N, \psi_{a})$ satisfies the following property:
$c_{a} \neq 0$ if and only if the representation $\tau$ of $\widetilde{\SL}_{2}(E)$ admits a non-zero $(N, \psi_{a})$-Whittaker functional. \\

It follows from \cite[~Theorem 4.1]{GHPS79} that for any non-trivial additive character $\psi'$ of $N$, the dimension of the space of $(N,\psi')$-Whittaker functionals for $\tau$ is at most one. 
Therefore, from the theorem of Rodier, as extended in \cite{Shiv15}, each $c_{a}(\tau)$ is either 1 or 0 depending on whether $\tau$ admits a non-zero Whittaker functional corresponding to the non-degenerate Whittaker datum $(N, \psi_{a})$ or not. 
\begin{remark} \label{semisimple G tilde}
 Let $\tilde{G}$ be a topological central extension of a connected reductive group $G$ by $\mu_{r}$, a cyclic group of order $r$. For $g \in \tilde{G}$ there exists a semisimple element $g_{s} \in \tilde{G}$ such that $g$ belongs to any conjugation invariant neighbourhood of $g_s \in \tilde{G}$.

\end{remark}

Let $\tau_{1}$ and $\tau_{2}$ be two irreducible admissible genuine representations of $\widetilde{\SL}_{2}(E)$. 
As $\widetilde{\{ \pm 1 \}}$ is the center of $\widetilde{\SL}_{2}(E)$ which are the only non-regular semisimple elements of $\widetilde{\SL}_{2}(E)$ and $\Theta_{\tau_1}, \Theta_{\tau_2}$ are given by smooth functions at regular semisimple points,
if $\Theta_{\tau_{1}} - \Theta_{\tau_{2}}$ is a smooth function in a neighbourhood of the identity then it is smooth function on the whole of $\widetilde{\SL}_{2}(E)$ provided $\tau_{1}, \tau_{2}$ have the same central characters.\\

For any non-trivial additive character $\psi'$ of $E$, let us assume that $\tau_{1}$ admits a non-zero Whittaker functional for $(N,\psi')$ if and only if $\tau_{2}$ does so too. 
Under this assumption $c_{a}(\tau_{1}) = c_{a}(\tau_{2})$ for all $a \in E^{\times}/E^{\times 2}$. Then we have the following result.
\begin{theorem} \label{Cass-Prasad-SL2}
Let $\tau_{1}, \tau_{2}$ be two irreducible admissible genuine representations of $\widetilde{\SL}_{2}(E)$ with the same central characters. 
For a non-trivial additive character $\psi'$ of $E$ assume that $\tau_{1}$ admits a non-zero Whittaker functional with respect to $(N,\psi')$ if and only if $\tau_{2}$ admits a non-zero Whittaker functional with respect to $(N,\psi')$. 
Then $\Theta_{\tau_1} - \Theta_{\tau_2}$ is constant in a neighbourhood of identity and hence extends to a smooth function on all of $\widetilde{\SL}_{2}(E)$.
\end{theorem}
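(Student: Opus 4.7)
The plan is to combine the Harish-Chandra-Howe germ expansion at the identity with the Rodier-type theorem from \cite{Shiv15} to force cancellation of the nilpotent contributions, and then to propagate smoothness from the identity to the rest of the group using the central character hypothesis together with Remark \ref{semisimple G tilde}.

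First I would invoke the germ expansion recalled just before the theorem. For each $i=1,2$, on a small neighborhood of $0 \in \mathfrak{sl}_2(E)$,
\[
\Theta_{\tau_i}\circ\exp \;=\; c_0(\tau_i) + \sum_{a\in E^{\times}/E^{\times 2}} c_a(\tau_i)\,\widehat{\mu}_{\mathcal{N}_a}.
\]
The Rodier-type theorem for covering groups from \cite{Shiv15}, combined with the uniqueness of Whittaker models \cite[Theorem 4.1]{GHPS79}, forces $c_a(\tau_i)\in\{0,1\}$, with $c_a(\tau_i)=1$ exactly when $\tau_i$ carries a non-zero $(N,\psi_a)$-Whittaker functional. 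The hypothesis on Whittaker models then gives $c_a(\tau_1)=c_a(\tau_2)$ for every $a\in E^\times/E^{\times 2}$, so the two expansions differ by the constant $c_0(\tau_1)-c_0(\tau_2)$ on the chosen neighborhood. This proves the first assertion.

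For the smooth extension, I would argue locally near every element $g\in\widetilde{\SL}_2(E)$. On the regular semisimple locus there is nothing to do, because $\Theta_{\tau_1}$ and $\Theta_{\tau_2}$ are already represented by smooth functions there. Since $\SL_2$ has rank one, the only non-regular semisimple elements of $\widetilde{\SL}_2(E)$ lie over $\{\pm I\}\subset\SL_2(E)$, i.e.\ in $\tilde Z$. At such a point $\tilde z$, let $\omega=\omega_{\tau_1}=\omega_{\tau_2}$ be the common central character; translating the identity-germ expansion by $\tilde z$ (which is permissible since the exponential is defined on the same neighborhood of $0$ and $\Theta_{\tau_i}(\tilde z\cdot h)=\omega(\tilde z)\Theta_{\tau_i}(h)$) yields
\[
(\Theta_{\tau_1}-\Theta_{\tau_2})(\tilde z\cdot\exp X)=\omega(\tilde z)\bigl(c_0(\tau_1)-c_0(\tau_2)\bigr),
\]
which is constant, and in particular smooth, in $X$. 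Hence $\Theta_{\tau_1}-\Theta_{\tau_2}$ is smooth on a conjugation-invariant neighborhood of every semisimple element of $\widetilde{\SL}_2(E)$. Finally, Remark \ref{semisimple G tilde} places an arbitrary $g$ inside any conjugation-invariant neighborhood of its semisimple part $g_s$; since $\Theta_{\tau_1}-\Theta_{\tau_2}$ is conjugation invariant and smooth near $g_s$, it is smooth near $g$.

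The main obstacle I expect is the step translating the germ expansion from the identity to a central element $\tilde z\in\tilde Z$: one must check that the central character identity, together with the fact that exponentials of small elements of $\mathfrak{sl}_2(E)$ still sweep out a full neighborhood of $\tilde z$ after multiplication, legitimately transports the germ expansion, and one must invoke the rank-one structure to confirm that no other non-regular semisimple orbits intervene. Both points are routine but should be spelled out so that the extension from ``constant near the identity'' to ``smooth globally'' is airtight.
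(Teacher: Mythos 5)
Your argument is correct and follows the same route as the paper: use the Rodier-type theorem from \cite{Shiv15} together with \cite[Theorem 4.1]{GHPS79} to force $c_a(\tau_1)=c_a(\tau_2)$ and hence constancy of the germ expansion difference near the identity, then propagate to all of $\widetilde{\SL}_2(E)$ via the central character and the observation that the only non-regular semisimple elements lie over $\{\pm I\}$. You spell out the translation step to central elements and the appeal to Remark~\ref{semisimple G tilde} somewhat more explicitly than the text does, but the substance is identical.
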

Using Theorem \ref{Cass-Prasad-SL2}, we prove an extension of a theorem of Casselman-Prasad \cite[~Theorem 5.2]{Prasad92}. 
\begin{theorem} \label{Cass-Prasad-GL2}
Let $\psi$ be a non-trivial character of $E$. 
Let $\pi_{1}$ and $\pi_{2}$ be two irreducible admissible genuine representations of $\widetilde{\GL}_{2}(E)$ with the same central characters such that $(\pi_{1})_{N, \psi} \cong (\pi_{2})_{N, \psi}$ as $\tilde{Z}$-modules. 
Then $\Theta_{\pi_{1}} - \Theta_{\pi_{2}}$ initially defined on regular semi-simple elements of $\widetilde{\GL}_{2}(E)$ extends to a smooth function on all of $\widetilde{\GL}_{2}(E)$.
\end{theorem}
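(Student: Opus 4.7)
The strategy is to reduce to Theorem \ref{Cass-Prasad-SL2} by restricting to the subgroup $H := \widetilde{\SL}_2(E) \cdot \tilde{Z}$, which has index $[E^{\times} : E^{\times 2}]$ in $\widetilde{\GL}_2(E)$. The first observation is that the only non-regular semisimple elements of $\widetilde{\GL}_2(E)$ are the preimages of central scalars, i.e.\ the elements of $\tilde{Z}$, and all of these already lie in $H$. Since $\Theta_{\pi_1} - \Theta_{\pi_2}$ is known to be smooth on the regular semisimple set, it therefore suffices to prove that the difference extends smoothly across $\tilde{Z}$.

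Next, restrict $\pi_i$ to $H$ and decompose via Clifford theory:
\[
\pi_i|_H \;=\; \bigoplus_j \sigma_j^{(i)},
\]
with $\widetilde{\GL}_2(E)/H \cong E^{\times}/E^{\times 2}$ acting transitively on the summands. Each $\sigma_j^{(i)}$ restricts to an irreducible genuine representation of $\widetilde{\SL}_2(E)$, whose central character is determined by $\omega_{\pi_i}$ together with the chosen $\tilde{Z}$-twist. The key dictionary I would set up is: the characters $\mu$ of $\tilde{Z}$ occurring in $(\pi_i)_{N, \psi} \subset \Omega(\omega_{\pi_i})$ are in bijection with pairs consisting of a summand $\sigma_j^{(i)}$ together with a Whittaker datum $(N, \psi_a)$ supported by its $\widetilde{\SL}_2(E)$-constituent. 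The bijection is mediated by inner conjugation by the diagonal matrices $\operatorname{diag}(e, 1)$, which simultaneously carries $\psi$ to $\psi_e$, twists a character of $\tilde{Z}$ in a controlled way, and permutes the Clifford summands.

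Given the hypothesis $(\pi_1)_{N, \psi} \cong (\pi_2)_{N, \psi}$ as $\tilde{Z}$-modules together with $\omega_{\pi_1} = \omega_{\pi_2}$, the dictionary above allows me to match the summands $\sigma_j^{(1)} \leftrightarrow \sigma_{\kappa(j)}^{(2)}$ so that matched summands have identical central characters and support Whittaker functionals for exactly the same collection of Whittaker data $(N, \psi_a)$. Theorem \ref{Cass-Prasad-SL2} then yields smoothness of each difference $\Theta_{\sigma_j^{(1)}} - \Theta_{\sigma_{\kappa(j)}^{(2)}}$ on $\widetilde{\SL}_2(E)$, and $\tilde{Z}$-equivariance extends this to $H$. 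Summing over $j$, $\Theta_{\pi_1} - \Theta_{\pi_2}$ is smooth on $H$, hence in particular near every point of $\tilde{Z}$. Because the centralizer in $\widetilde{\GL}_2(E)$ of any $\tilde{z}_0 \in \tilde{Z}$ has Lie algebra $\mathfrak{gl}_2(E)$ (its image in $\GL_2(E)$, being central, has full centralizer), the Harish-Chandra--Howe germ expansion of $\Theta_{\pi_i}$ in a $\widetilde{\GL}_2(E)$-neighborhood of $\tilde{z}_0$ involves the same nilpotent orbits as the expansion inside $H$; matching of the nilpotent-orbit coefficients on $H$ therefore upgrades to matching on $\widetilde{\GL}_2(E)$, giving the claimed smoothness.

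The main obstacle will be establishing the precise bijection used in the second paragraph, relating characters of $\tilde{Z}$ in the twisted Jacquet module to the Clifford summands and their Whittaker profiles. This requires explicit computations with the Kubota $2$-cocycle and a careful analysis of how $\operatorname{diag}(e, 1)$ acts on Whittaker models and on characters of $\tilde{Z}$. A secondary technical point is the case $\tilde{z}_0 \in \tilde{Z} \setminus \widetilde{Z^2}$, where $\tilde{z}_0$ is not central in $\widetilde{\GL}_2(E)$ and its centralizer is a proper index-$2$ subgroup; here one must verify that the germ-expansion comparison between $H$ and $\widetilde{\GL}_2(E)$ still goes through (using, e.g., that $\Theta_{\pi_i}(\tilde{z}_0) = 0$ for such $\tilde{z}_0$ by the genuineness of $\pi_i$).
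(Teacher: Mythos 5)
Your approach matches the paper's: both restrict to the index-$[E^{\times}:E^{\times 2}]$ open subgroup $\widetilde{\GL}_2(E)_+ = \tilde{Z}\cdot\widetilde{\SL}_2(E)$, decompose $\pi_i$ via Clifford theory as $\bigoplus_a (\mu_i\tau_i)^a$, use the $\tilde{Z}$-module hypothesis on $(\pi_i)_{N,\psi}$ to match summands with equal central characters, and feed the result into Theorem \ref{Cass-Prasad-SL2}. The ``dictionary'' you flag as the main obstacle is exactly what the paper pins down explicitly: writing $(\mu_1\tau_1)^a = \mu_1^a\tau_1^a$ with $\mu_1^a(\tilde z) = (a,z)\mu_1(\tilde z)$, observing that all $\mu_1^a$ are distinct (nondegeneracy of the Hilbert symbol), so the $\tilde{Z}$-module $(\pi_i)_{N,\psi} = \bigoplus_a \mu_i^a(\tau_i^a)_{N,\psi}$ determines $\mu_1 = \mu_2^b$ and, after re-indexing, forces $(\tau_1^a)_{N,\psi}\cong(\tau_2^a)_{N,\psi}$ for every $a$. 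One small simplification you should note: since $\widetilde{\GL}_2(E)_+$ is open, any $\widetilde{\GL}_2(E)$-neighborhood of $\tilde z_0\in\tilde Z$ can already be taken inside $\widetilde{\GL}_2(E)_+$, so there is no separate ``upgrade'' step from the $H$-germ expansion to the $\widetilde{\GL}_2(E)$-germ expansion, and no special handling of $\tilde z_0\notin\widetilde{Z^2}$ is required; the paper just writes $\Theta_{\pi_i,\tilde z} = \sum_a \mu^a(\tilde z)\,\Theta_{\tau_i^a,1}$ directly and takes the difference.
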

\begin{proof}
We already know that $\Theta_{\pi_{1}}$ and $\Theta_{\pi_{2}}$ are smooth on the set of regular semisimple elements, so is $\Theta_{\pi_{1}} - \Theta_{\pi_{2}}$. 
To prove the smoothness of $\Theta_{\pi_{1}} - \Theta_{\pi_{2}}$ on whole of $\widetilde{\GL}_{2}(E)$, we need to prove the smoothness at every point in $\tilde{Z}$. 
As $\tilde{Z}$ is not the center of $\widetilde{\GL}_{2}(E)$, the smoothness at the identity is not enough to imply the smoothness at every point in $\tilde{Z}$. 
Note that $\tilde{Z}$ is the center of $\widetilde{\GL}_{2}(E)_{+} := \tilde{Z} \cdot \widetilde{\SL}_{2}(E)$ and  $\widetilde{\GL}_{2}(E)_{+}$ is an open and normal subgroup of $\widetilde{\GL}_{2}(E)$ of index $[E^{\times} : E^{\times 2}]$. 

Choose irreducible admissible genuine representations $\tau_{1}$ and $\tau_{2}$ of $\widetilde{\SL}_{2}(E)$ and characters $\mu_1, \mu_2$ of $\tilde{Z}$ such that
\begin{equation}
\pi_{1} = \ind_{\widetilde{\GL}_{2}(E)_{+}}^{\widetilde{\GL}_{2}(E)}(\mu_1 \tau_{1}) \,\, \text{ and } \,\, \pi_{2} = \ind_{\widetilde{\GL}_{2}(E)_{+}}^{\widetilde{\GL}_{2}(E)}(\mu_2 \tau_{2})
\end{equation}
where $\mu_i \tau_{i}$ for $i =1,2$ denotes the irreducible representation of $\widetilde{\GL}_{2}(E)_{+}$ on which $\tilde{Z}$ acts by $\mu_i$ and $\widetilde{\SL}_{2}(E)$ acts by $\tau_{i}$. 
It follows that
\begin{equation} \label{noname-1}
\pi_{1}|_{\widetilde{\GL}_{2}(E)_{+}} = \bigoplus_{a \in E^{\times}/E^{\times 2}} (\mu_1 \tau_{1})^{a} \,\, \text{ and } \,\, \pi_{2}|_{\widetilde{\GL}_{2}(E)_{+}} = \bigoplus_{a \in E^{\times}/E^{\times 2}} (\mu_2 \tau_{2})^{a},
\end{equation}
where we abuse notation to let $a$ denote the matrix $\left( \begin{matrix} a & 0 \\ 0 & 1 \end{matrix} \right)$. 
Observe that $(\mu_1 \tau_1)^{a} = \mu_{1}^{a} \tau_{1}^{a}$ with $\mu_{1}^{a}(\tilde{z}) = (a, z) \mu_{1}(\tilde{z})$, where $\tilde{z} \in \tilde{Z}$ lies above $z \in Z$; in particular, all the characters $\mu_{1}^{a}$ for $a \in E^{\times}/E^{\times 2}$, are distinct.
From the identity (\ref{noname-1}) we find that 
\begin{equation}
 (\pi_1)_{N(E), \psi} = \bigoplus_{a \in E^{\times}/E^{\times 2}} \mu_{1}^{a} (\tau_{1}^{a})_{N(E), \psi} \,\, \text{ and } \,\, (\pi_2)_{N(E), \psi} = \bigoplus_{a \in E^{\times}/E^{\times 2}} \mu_{2}^{a} (\tau_{2}^{a})_{N(E), \psi}
\end{equation}
Since $(\pi_1)_{N, \psi} \cong (\pi_2)_{N, \psi}$ as $\tilde{Z}$-modules, in particular, the part corresponding to $\mu^{a}$-eigenspaces are isomorphic for all $a \in E^{\times}/E^{\times 2}$. 
Therefore $\mu_{1} = \mu_{2}^{b}$ for some $b \in E^{\times}/E^{\times 2}$.
Since $\pi_2 = \ind_{\widetilde{\GL}_{2}(E)_{+}}^{\widetilde{\GL}_{2}(E)}(\mu_{2} \tau_{2}) =
\ind_{\widetilde{\GL}_{2}(E)_{+}}^{\widetilde{\GL}_{2}(E)}(\mu_{2}^{b} \tau_{2}^{b})$, by changing $\tau_{2}$ by $\tau_{2}^{b}$, we can assume that $\pi_{1} = \ind_{\widetilde{\GL}_{2}(E)_{+}}^{\widetilde{\GL}_{2}(E)} (\mu \tau_{1})$, and $\pi_{2} = \ind_{\widetilde{\GL}_{2}(E)_{+}}^{\widetilde{\GL}_{2}(E)}( \mu \tau_{2})$. 
Now $(\pi_{1})_{N(E), \psi} \cong (\pi_{2})_{N(E), \psi}$ as $\tilde{Z}$-modules translates into
$(\tau_{1}^{a})_{N(E), \psi} \cong (\tau_{2}^{a})_{N(E), \psi}$ for all $a \in E^{\times}/E^{\times 2}$. Therefore, by Theorem \ref{Cass-Prasad-SL2}, $\Theta_{\tau_{1}^{a}} - \Theta_{\tau_{2}^{a}}$ is constant in a neighbourhood of the identity for all $a \in E^{\times}/E^{\times 2}$.

Let $\Theta_{\rho, g}$ denote the character expansion of an irreducible admissible representation $\rho$ in a neighbourhood of the point $g$, then

\[
\Theta_{\pi_{1}, \tilde{z}} = \sum_{a \in E^{\times}/E^{\times 2}} \Theta_{(\mu \tau_{1})^{a}, \tilde{z}} = \sum_{a \in E^{\times}/E^{\times 2}} \mu^{a}(\tilde{z}) \Theta_{\tau_{1}^{a}, 1}
\]

and 

\[
\Theta_{\pi_{2}, \tilde{z}} = \sum_{a \in E^{\times}/E^{\times 2}} \Theta_{(\mu \tau_{2})^{a}, \tilde{z}} = \sum_{a \in E^{\times}/E^{\times 2}} \mu^{a}(\tilde{z}) \Theta_{\tau_{2}^{a}, 1}.
\]
This proves that $\Theta_{\pi_1} - \Theta_{\pi_2}$ is a constant function on regular semi-simple points in some neighbourhood of $\tilde{z}$ for all $\tilde{z} \in \tilde{Z} \subset \widetilde{\GL}_{2}(E)$, and therefore it extends to a smooth function in that neighbourhood of $\tilde{z}$. Thus $\Theta_{\pi_1} - \Theta_{\pi_2}$, which is initially defined on regular semi-simple elements of $\widetilde{\GL}_{2}(E)$ extends to a smooth function on all of $\widetilde{\GL}_{2}(E)$.
\end{proof}
\begin{corollary}
Let $\pi_{1}, \pi_{2}$ be two irreducible admissible genuine representations of $\widetilde{\GL}_{2}(E)$ with the same central character such that $(\pi_{1})_{N, \psi} \cong (\pi_{2})_{N, \psi}$ as $\tilde{Z}$-modules. 
Let $H$ be a subgroup of $\widetilde{\GL}_{2}(E)$ that is compact modulo center. 
Then there exist finite dimensional representations $\sigma_{1}, \sigma_{2}$ of $H$ such that 
\[
\pi_{1}|_{H} \oplus \sigma_{1} \cong \pi_{2}|_{H} \oplus \sigma_{2}.
\]
\end{corollary}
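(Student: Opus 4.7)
The plan is to leverage Theorem \ref{Cass-Prasad-GL2} to reduce the claim to a finite character expansion on $H$. Set $\Phi := \Theta_{\pi_{1}} - \Theta_{\pi_{2}}$; by Theorem \ref{Cass-Prasad-GL2}, $\Phi$ extends to a smooth, hence locally constant, function on all of $\widetilde{\GL}_{2}(E)$, transforming by the common central character $\omega$ of $\pi_{1}$ and $\pi_{2}$ under the center. Restricted to $H$, $\Phi$ is a locally constant, $\omega$-equivariant class function on $H$, and since $H$ is compact modulo center the standard Peter--Weyl/character theory for compact-mod-center totally disconnected groups applies.

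The key steps I would carry out are the following. First, using that $\Phi|_{H}$ is locally constant and class-invariant, there exists a compact open subgroup $K \subset H$ such that $\Phi|_{H}$ is bi-$K$-invariant (right-$K$-invariance is automatic from local constancy and compactness of $H$ mod center, and it upgrades to bi-$K$-invariance by the class-function property). Second, the space of bi-$K$-invariant, $\omega$-equivariant locally constant class functions on $H$ is finite-dimensional, as it is cut out by the finite-dimensional quotient of the convolution algebra by $K$ and $\omega$; its irreducible characters are exactly the $\Theta_{\sigma'_{j}}$ for the finitely many irreducible genuine representations $\sigma'_{1}, \ldots, \sigma'_{n}$ of $H$ with central character $\omega$ and non-zero $K$-fixed vectors. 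Hence
\[
\Phi|_{H} = \sum_{j=1}^{n} c_{j} \, \Theta_{\sigma'_{j}},
\]
with $c_{j} \in \Z$, integrality following from the fact that $\Phi$ is the difference of two honest characters of admissible representations, so the orthogonality integrals compute integer differences of multiplicities. Splitting into positive and negative parts gives finite-dimensional
\[
\sigma_{2} := \bigoplus_{c_{j} > 0} (\sigma'_{j})^{\oplus c_{j}}, \qquad \sigma_{1} := \bigoplus_{c_{j} < 0} (\sigma'_{j})^{\oplus (-c_{j})},
\]
satisfying $\Theta_{\pi_{1}} + \Theta_{\sigma_{1}} = \Theta_{\pi_{2}} + \Theta_{\sigma_{2}}$ on $H$.

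The main obstacle is upgrading this character identity to an actual $H$-module isomorphism $\pi_{1}|_{H} \oplus \sigma_{1} \cong \pi_{2}|_{H} \oplus \sigma_{2}$, since $\pi_{i}|_{H}$ is typically not of finite length and a priori one only has an equality of virtual characters. I would handle this type-by-type: for any irreducible genuine $H$-representation $\sigma$, choose a compact open subgroup $K' \subset H$ lying in an open compact subgroup of $\widetilde{\GL}_{2}(E)$ with $\sigma^{K'} \neq 0$; admissibility of $\pi_{i}$ forces $\pi_{i}^{K'}$ to be finite-dimensional, and hence the multiplicity of $\sigma$ in $\pi_{i}|_{H}$ is finite and computed by orthogonality as an integral of $\Theta_{\pi_{i}}|_{H}$ against $\overline{\Theta_{\sigma}}$ over a suitable fundamental domain. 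For $\sigma \notin \{\sigma'_{1}, \ldots, \sigma'_{n}\}$ these integrals agree on both sides, so the multiplicities coincide; for $\sigma = \sigma'_{j}$ they differ by exactly $c_{j}$, which is absorbed by the construction of $\sigma_{1}, \sigma_{2}$, yielding the desired isomorphism.
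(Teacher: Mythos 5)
The paper states this corollary without proof, so there is no paper argument to compare against directly; your route via the smooth function $\Phi=\Theta_{\pi_1}-\Theta_{\pi_2}$ from Theorem \ref{Cass-Prasad-GL2}, its finite expansion $\Phi|_H=\sum c_j\Theta_{\sigma'_j}$ into characters of $H$, and then a multiplicity comparison is exactly the intended one (it is the standard argument going back to \cite[Cor.~2.8]{Prasad92}). However, there is one genuine gap in the step where you justify finiteness of multiplicities.

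You write that, given an irreducible $H$-representation $\sigma$, one chooses a compact open $K'\subset H$ contained in a compact open subgroup of $\widetilde{\GL}_2(E)$, and that \emph{admissibility of $\pi_i$ forces $\pi_i^{K'}$ to be finite-dimensional}. This is false as stated: if $K'\subset K$ with $K$ compact open in $\widetilde{\GL}_2(E)$, then $\pi_i^{K}\subset\pi_i^{K'}$, so the inclusion goes the wrong way. Unless $H$ is open in $\widetilde{\GL}_2(E)$ (it is not for $H=\mathcal K$ or $H=D_F^\times$, the cases actually used), a compact open subgroup of $H$ is not open in $\widetilde{\GL}_2(E)$, and $\pi_i^{K'}$ is typically infinite-dimensional; in particular $\pi_i|_H$ need not be admissible as an $H$-module, and the individual multiplicities $m(\pi_i,\sigma)$ need not be finite a priori. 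Your proposed orthogonality formula for $m(\pi_i,\sigma)$ against $\Theta_{\pi_i}|_H$ is also delicate on its own, since $\Theta_{\pi_i}$ is only a locally integrable function on $\widetilde{\GL}_2(E)$ and does not restrict to the (measure-zero) subgroup $H$; only the \emph{difference} $\Phi$ is continuous and hence restricts.

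The fix is to run the comparison at the level of the finite-dimensional spaces $\pi_i^{K_n}$ for a decreasing sequence of compact open normal subgroups $K_n$ of a fixed compact open $K\subset\widetilde{\GL}_2(E)$. For $h\in H\cap K$ one has $\tr\big(\pi_i(h)\,e_{K_n}\big)=\Theta_{\pi_i}\!\big(\tfrac{1}{\vol K_n}\mathbf 1_{hK_n}\big)$, so that $\tr_{\pi_1^{K_n}}\pi_1(h)-\tr_{\pi_2^{K_n}}\pi_2(h)=\tfrac{1}{\vol K_n}\int_{hK_n}\Phi=\Phi(h)$ once $n$ is large enough (uniformly in $h$, by local constancy of $\Phi$ on the compact set $H\cap K$). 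Thus the finite-dimensional $H\cap K$-representations $\pi_1^{K_n}$ and $\pi_2^{K_n}$ have character difference identically $\Phi|_{H\cap K}$, independent of $n$; passing to multiplicities, for each irreducible $\sigma$ of $H\cap K$ the quantities $m^n_1(\sigma)-m^n_2(\sigma)$ stabilize to the coefficient $c(\sigma)$ of $\Theta_\sigma$ in $\Phi|_{H\cap K}$, and letting $n\to\infty$ shows that for each $\sigma$ either both full multiplicities are infinite or both are finite with finite difference $c(\sigma)$, nonzero for only finitely many $\sigma$. One then extends from $H\cap K$ to $H$ using that $H$ is compact modulo center (so $H\cap K$ has finite index in $H$ modulo the center, and the central character is fixed). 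This yields the asserted isomorphism $\pi_1|_H\oplus\sigma_1\cong\pi_2|_H\oplus\sigma_2$ without ever claiming $\pi_i|_H$ is admissible. The rest of your write-up --- $\Phi|_H$ locally constant and $\omega$-equivariant, bi-$K$-invariance, finiteness of the character expansion, integrality of the coefficients, splitting into positive and negative parts --- is correct and is the right framework.
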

In other words, this corollary says that the virtual representation $(\pi_{1} - \pi_{2})|_{H}$ is finite dimensional and hence the multiplicity of an irreducible representation of $H$ in $(\pi_{1} - \pi_{2})|_{H}$ will be finite.

\section{Part C of Theorem \ref{DP-metaplctic}} \label{DP-metaplectic-Part3}
Let $\pi_{1}$ be an irreducible admissible genuine representation of $\widetilde{\GL}_{2}(E)$. 
We take another admissible genuine representation $\pi_{1}'$ having the same central character as that of $\pi_{1}$ and satisfying $(\pi_{1})_{N(E), \psi} \oplus (\pi_{1}')_{N(E), \psi} \cong \Omega( \omega_{\pi_{1}} )$ as $\tilde{Z}$-modules. 
From Proposition \ref{whittaker models of principal series}, if $\pi_{1}$ is a principal series representation then we can take $\pi_{1}' = 0$. 
It can be seen that if $\pi_{1}$ is not a principal series representation then $(\pi_{1})_{N(E), \psi}$ is a proper $\tilde{Z}$-submodule of $\Omega( \omega_{\pi_{1}} )$ forcing $\pi_{1}' \neq 0$. 
In particular, if $\pi_{1}$ is one of the Jordan-H{\"o}lder factors of a reducible principal series representation then one can take $\pi_{1}'$ to be the other Jordan-H{\"o}lder factor of the principal series representation. 
It should be noted that for a supercuspidal representation $\pi_{1}$ we do not have any obvious choice for $\pi_{1}'$.\\

Let $\pi_{2}$ be a supercuspidal representation of ${\rm GL}_2(F)$. 
To prove Theorem \ref{DP-metaplctic} in this case, we use character theory and deduce the result by using the result of restriction of a principal series representation of $\widetilde{\GL}_{2}(E)$ which has already been proved in Section \ref{DP-metaplectic-Part2}. 
We can assume, if necessary after twisting by a character of $F^{\times}$, that $\pi_{2}$ is a minimal representation. 
Recall that an irreducible representation $\pi_{2}$ of $\GL_{2}(F)$ is called minimal if the conductor of $\pi_{2}$ is less than or equal to the conductor of $\pi_{2} \otimes \chi$ for any character $\chi$ of $F^{\times}$. 
By a theorem of Kutzko \cite{Kutzko78}, a minimal supercuspidal representation $\pi_2$ of $\GL_{2}(F)$ is of the form $\ind_{\mathcal{K}}^{ {\rm GL}_2(F)}(W_2)$, where $W_2$ is a representation of a maximal compact modulo center subgroup $\mathcal{K}$ of ${\rm GL}_2(F)$. 
By Frobenius reciprocity, 
\[
\begin{array}{lll}
\Hom_{{\rm GL}_2(F)} \left( \pi_{1} \oplus \pi_{1}', \pi_{2} \right) & = & \Hom_{{\rm GL}_2(F)} \left( \pi_{1} \oplus \pi_{1}', \ind_{\mathcal{K}}^{{\rm GL}_2(F)}(W_2) \right) \\
                        & = & \Hom_{\mathcal{K}} \left( (\pi_{1} \oplus \pi_{1}')|_{\mathcal{K}}, W_2 \right). 
\end{array}
\]
To prove Theorem \ref{DP-metaplctic}, it suffices to prove that:
\[
 \dim \Hom_{\mathcal{K}}[(\pi_{1} \oplus \pi_{1}')|_{\mathcal{K}}, W_2] + \dim \Hom_{D_{F}^{\times}}[\pi_{1} \oplus \pi_{1}', \pi_{2}'] = [E^{\times} : F^{\times}E^{\times 2}].
\]
For any (virtual) representation $\pi$ of $\widetilde{\GL}_{2}(E)$, let $m(\pi, W_2) = \dim \Hom_{\mathcal{K}}[\pi|_{\mathcal{K}}, W_2]$ and $m(\pi,\pi_{2}') = \dim \Hom_{D_{F}^{\times}}[\pi, \pi_{2}']$. With these notations we will prove: 
\begin{equation} \label{Equation:Main-1}
 m(\pi_{1} \oplus \pi_{1}',W_2) + m(\pi_{1} \oplus \pi_{1}',\pi_{2}') = [E^{\times} : F^{\times}E^{\times 2}].
\end{equation}

Let $Ps$ be an irreducible principal series representation of $\widetilde{\GL}_{2}(E)$ whose central character $\omega_{Ps}$ is same as the central character $\omega_{\pi_{1}}$ of $\pi_{1}$ (it is clear that there exists one such). By Proposition \ref{whittaker models of principal series}, we know that $(Ps)_{N(E), \psi} \cong \Omega(\omega_{Ps})$ as a $\tilde{Z}$-module. On the other hand, the representation $\pi_{1}'$ has been chosen in such a way that $(\pi_{1})_{N(E), \psi} \oplus (\pi_{1}')_{N(E), \psi} = \Omega(\omega_{\pi_{1}})$ as $\tilde{Z}$-module. Then, as a $\tilde{Z}$-module we have
\[
(\pi_{1} \oplus \pi_{1}')_{N(E), \psi} = (\pi_{1})_{N(E), \psi} \oplus (\pi_{1}')_{N(E), \psi} = \Omega(\omega_{\pi_{1}}) = \Omega({\omega_{Ps}}) = (Ps)_{N(E), \psi}.
\]
We have already proved in Section \ref{DP-metaplectic-Part2} that
\[
 m(Ps,W_2) + m(Ps,\pi_{2}') = [E^{\times} : F^{\times}E^{\times 2}].
\]
In order to prove Equation (\ref{Equation:Main-1}), we prove 
\begin{equation} \label{Equation:Main-2}
m(\pi_{1} \oplus \pi_{1}' -Ps,W_2) + m(\pi_{1} \oplus \pi_{1}' -Ps,\pi_{2}') = 0.
\end{equation}
The relation in Equation (\ref{Equation:Main-2}) follows from the following theorem:
\begin{theorem}
Let $\Pi_{1}, \Pi_{2}$ be two genuine representations of $\widetilde{\GL}_{2}(E)$ of finite length with a central character which is same for $\Pi_{1}$ and $\Pi_{2}$, and such that $(\Pi_{1})_{N(E), \psi} \cong (\Pi_{2})_{N(E), \psi}$ as $\tilde{Z}$-modules for a non-trivial additive character $\psi$ of $E$. Let $\pi_{2}$ be an irreducible supercuspidal representation of $\GL_{2}(F)$ such that the central characters $\omega_{\Pi_{1}}$ of $\Pi_{1}$ and $\omega_{\pi_{2}}$ of $\pi_{2}$ agree on $F^{\times} \cap E^{\times 2}$. Let $\pi_{2}'$ be the finite dimensional representation of $D_{F}^{\times}$ associated to $\pi_{2}$ by the Jacquet-Langlands correspondence. Then
\[
m(\Pi_{1} - \Pi_{2}, \pi_{2}) + m(\Pi_{1} - \Pi_{2}, \pi_{2}') = 0.
\]
\end{theorem}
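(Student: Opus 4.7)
The plan is to adapt the character-theoretic argument of \cite[Section 5]{Prasad92} to the metaplectic setting, the key new ingredient being Theorem \ref{Cass-Prasad-GL2} established in the previous section. First I would apply Theorem \ref{Cass-Prasad-GL2} to the hypotheses on $\Pi_{1}, \Pi_{2}$ to conclude that $\Theta := \Theta_{\Pi_{1}} - \Theta_{\Pi_{2}}$, initially defined on the regular semisimple locus, extends to a smooth invariant function on the whole of $\widetilde{\GL}_{2}(E)$. Via the splittings fixed in Working Hypothesis \ref{working hypothesis}, $\Theta$ therefore pulls back to smooth invariant functions on $\GL_{2}(F)$ and on $D_{F}^{\times}$.

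Next I would express both multiplicities as inner products of characters. After an innocuous twist we may write $\pi_{2} = \ind_{\mathcal{K}}^{\GL_{2}(F)} W_{2}$ with $\mathcal{K}$ maximal compact-mod-center. Frobenius reciprocity gives $m(\Pi_{1} - \Pi_{2}, \pi_{2}) = \dim \Hom_{\mathcal{K}}((\Pi_{1}-\Pi_{2})|_{\mathcal{K}}, W_{2})$, and by the Corollary to Theorem \ref{Cass-Prasad-GL2} the restriction $(\Pi_{1}-\Pi_{2})|_{\mathcal{K}}$ is a virtual finite dimensional representation of $\mathcal{K}$. Schur orthogonality for groups compact modulo center, combined with the fact that $\Theta_{\pi_{2}}$ is compactly supported modulo $Z(F)$ and vanishes on hyperbolic regular elements (supercuspidality), then lets me rewrite
\[
m(\Pi_{1} - \Pi_{2}, \pi_{2}) = \int_{\GL_{2}(F)/Z(F)} \Theta(g)\, \overline{\Theta_{\pi_{2}}(g)}\, dg.
\]
Because $D_{F}^{\times}$ is itself compact modulo $F^{\times}$ and $\pi_{2}'$ is already finite dimensional, the analogous formula
\[
m(\Pi_{1} - \Pi_{2}, \pi_{2}') = \int_{D_{F}^{\times}/F^{\times}} \Theta(d)\, \overline{\Theta_{\pi_{2}'}(d)}\, dd
\]
follows directly from Schur orthogonality.

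The final step combines Weyl integration with Jacquet--Langlands. Both integrals decompose as sums over quadratic extensions $L/F$ of integrals over $L^{\times}/F^{\times}$: on the $\GL_{2}(F)$ side the split torus contributes nothing since $\Theta_{\pi_{2}}$ is supercuspidal, and on the $D_{F}^{\times}$ side every maximal torus is elliptic. The Weyl discriminant factors on the two sides match on the corresponding elliptic torus. By the Jacquet--Langlands character identity $\Theta_{\pi_{2}}(t) = -\Theta_{\pi_{2}'}(t)$ on regular elliptic $t$, and by Working Hypothesis \ref{working hypothesis} the two embeddings $L^{\times} \hookrightarrow \widetilde{\GL}_{2}(E)$ (through $\GL_{2}(F)$ and through $D_{F}^{\times}$) are conjugate, so the invariant function $\Theta$ pulls back identically along either one. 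Term by term in $L$, the two contributions cancel, yielding the claimed vanishing $m(\Pi_{1}-\Pi_{2},\pi_{2}) + m(\Pi_{1}-\Pi_{2},\pi_{2}') = 0$.

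The main obstacle is largely bookkeeping: matching the normalizing constants in the two Weyl integration formulas so that the Jacquet--Langlands cancellation is exact, and justifying that the virtual representations $(\Pi_{1}-\Pi_{2})|_{\mathcal{K}}$ and $(\Pi_{1}-\Pi_{2})|_{D_{F}^{\times}}$ may legitimately be paired with $W_{2}$ and $\pi_{2}'$ via such integral formulas. The genuine new ingredients beyond \cite{Prasad92} are the global smoothness of $\Theta$ furnished by Theorem \ref{Cass-Prasad-GL2}, together with the conjugacy of the two embeddings of $L^{\times}$ provided by the Working Hypothesis, which together permit the same term-by-term cancellation that drives the argument in the linear case.
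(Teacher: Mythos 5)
Your plan follows the same route as the paper: invoke Theorem \ref{Cass-Prasad-GL2} to make $\Theta_{\Pi_1}-\Theta_{\Pi_2}$ a smooth invariant function, reduce the $\GL_2(F)$ side to $\mathcal{K}$ via Kutzko's description $\pi_2 = \ind_{\mathcal{K}}^{\GL_2(F)} W_2$, use the Corollary's finite-dimensionality of $(\Pi_1-\Pi_2)|_{\mathcal{K}}$ to write the multiplicity as a character pairing, pass through Weyl integration, and cancel term by term against the $D_F^\times$ side using the Jacquet--Langlands identity $\Theta_{\pi_2}=-\Theta_{\pi_2'}$ on regular elliptic elements together with the Working Hypothesis on conjugate embeddings of the quadratic tori.

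One step is not justified as you state it. You claim ``$\Theta_{\pi_2}$ is compactly supported modulo $Z(F)$,'' and use this to pass from Schur orthogonality on $\mathcal{K}/Z(F)$ to the integral $\int_{\GL_2(F)/Z(F)} \Theta\,\overline{\Theta_{\pi_2}}\,dg$. That compactness claim is false: the character of a supercuspidal representation of $\GL_2(F)$ is not compactly supported modulo the center (it is the \emph{matrix coefficients} that are). The correct compactly supported function is $\Theta_{W_2}$, the character of the finite-dimensional $\mathcal{K}$-representation $W_2$ extended by zero to $\GL_2(F)$; this is a matrix coefficient of $\pi_2$. Schur orthogonality on $\mathcal{K}/F^\times$ gives $m(\Pi_1-\Pi_2,W_2) = \frac{1}{\vol(\mathcal{K}/F^\times)}\int_{\GL_2(F)/F^\times} \Theta_{\Pi_1-\Pi_2}\,\Theta_{W_2}\,dx$, and only after applying Weyl integration and Harish-Chandra's formula (Lemma \ref{lemma:HC}) for the orbital integral of a matrix coefficient at regular elliptic $x$,
\[
\int_{E_i^\times\backslash\GL_2(F)} \Theta_{W_2}(\bar g^{-1}x\bar g)\,d\bar g \;=\; \frac{\Theta_{\pi_2}(x)}{\vol(E_i^\times/F^\times)},
\]
does $\Theta_{\pi_2}$ enter. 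Your final formula over elliptic tori is correct, but the shortcut through a nonexistent compact support property of $\Theta_{\pi_2}$ would need to be replaced by this matrix-coefficient/orbital-integral argument (the $D_F^\times$ side, being genuinely compact modulo center, is fine as you wrote it).
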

We will use character theory to prove this relation following \cite{Prasad92} very closely. First of all, by Theorem \ref{Cass-Prasad-GL2}, $\Theta_{\Pi_{1} - \Pi_{2}}$ is given by smooth function on $\widetilde{\GL}_{2}(E)$. Now we recall the Weyl integration formula for ${\rm GL}_{2}(F)$.\\
\subsection{Weyl integration formula} \label{Weyl_I_F}
\begin{lemma} {\rm \cite[Formula 7.2.2]{JL70}} \label{Weyl-Integration-Formula} \\
For a smooth and compactly supported function $f$ on ${\rm GL}_{2}(F)$ we have
\begin{equation} \label{Expression-W-I-F}
\int_{{\rm GL}_{2}(F)} f(y) dy = \sum_{E_{i}} \int_{E_{i}} \bigtriangleup(x) \left( \frac{1}{2} \int_{E_{i} \backslash {\rm GL}_{2}(F)} f(\bar{g}^{-1}x \bar{g}) \, d\bar{g} \right) \, dx
\end{equation}
where the $E_{i}$'s are representatives for the distinct conjugacy classes of maximal tori in ${\rm GL}_{2}(F)$ and 
\[
\bigtriangleup(x) = \left| \left| \dfrac{(x_{1}- x_{2})^{2}}{x_{1}x_{2}} \right| \right|_{F}
\]
where $x_{1}$ and $x_{2}$ are the eigenvalues of $x$. 
\end{lemma}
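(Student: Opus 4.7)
The plan is to derive this integration formula by parametrizing the regular semisimple locus $\GL_{2}(F)^{\rm rs}$ via conjugation from a maximal torus, compute the Jacobian of the parametrization, and sum over the finitely many $\GL_{2}(F)$-conjugacy classes of maximal tori. Since the complement of $\GL_{2}(F)^{\rm rs}$ in $\GL_{2}(F)$ has Haar measure zero, the left-hand side is unchanged by restricting integration to $\GL_{2}(F)^{\rm rs}$. First I would classify the maximal $F$-tori of $\GL_{2}$ up to conjugacy: they correspond bijectively to isomorphism classes of $2$-dimensional commutative semisimple $F$-algebras, namely $F \times F$ (the split torus) together with each quadratic field extension $K/F$ via $K^{\times} \cong {\rm Res}_{K/F}\mathbb{G}_{m}(F)$. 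This finite list is the set $\{E_{i}\}$, and every regular semisimple element is conjugate to a regular element of exactly one $E_{i}$.

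Next, for each torus $T = E_{i}$ I would consider the orbit map
\[
\phi_{T} : T^{\rm rs} \times \bigl(T \backslash \GL_{2}(F)\bigr) \longrightarrow \GL_{2}(F)^{\rm rs}, \qquad (x, T\bar{g}) \mapsto \bar{g}^{-1} x \bar{g},
\]
whose image is the union of all $\GL_{2}(F)$-conjugates of $T^{\rm rs}$ and whose fiber over any regular semisimple point is a torsor under the Weyl group $W(T) = N_{\GL_{2}(F)}(T)/T$. For every maximal torus of $\GL_{2}$, the Weyl group has order $2$ (the diagonal swap in the split case; the Galois involution of $K/F$ in the anisotropic case), and this accounts for the factor $\tfrac{1}{2}$ in the formula.

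The main technical step is the Jacobian calculation. Identify the tangent space of $T \backslash \GL_{2}(F)$ at the identity coset with $\g/\mathfrak{t}$, where $\g = \mathfrak{gl}_{2}(F)$ and $\mathfrak{t} = {\rm Lie}(T)$. Differentiating $\phi_{T}$ in the coset direction gives the $F$-linear map $Y \mapsto \Ad(x^{-1})Y - Y$ from $\g/\mathfrak{t}$ into a complement of $\mathfrak{t}$, while differentiation in the $x$-direction is the identity on $\mathfrak{t}$. The absolute value of $\det\bigl(\Ad(x^{-1}) - 1\bigr)$ on $\g/\mathfrak{t}$ is
\[
\left\| (x_{1}/x_{2} - 1)(x_{2}/x_{1} - 1) \right\|_{F} \;=\; \left\| \frac{(x_{1} - x_{2})^{2}}{x_{1}x_{2}} \right\|_{F} \;=\; \bigtriangleup(x),
\]
where $x_{1}, x_{2}$ are the eigenvalues of $x$; for anisotropic $T$ these lie in $K$, but $\bigtriangleup(x)$ is Galois-invariant and hence in $F$. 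Applying the $p$-adic analytic change-of-variables formula to $\phi_{T}$, using left invariance of the Haar measure on $\GL_{2}(F)$ and dividing by $|W(T)| = 2$ to compensate the Weyl covering, yields the stated identity; the passage from $E_{i}^{\rm rs}$ to $E_{i}$ in the outer integral is harmless because the complement has measure zero.

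The main obstacle I expect is the measure-theoretic bookkeeping rather than the determinant calculation itself: one must fix compatible normalizations of Haar measure on $T$, on $\GL_{2}(F)$, and on the quotient $T \backslash \GL_{2}(F)$ so that the change-of-variables identity produces exactly the Jacobian $\bigtriangleup(x)$ with no stray constants, and one must verify that $\phi_{T}$ is a $p$-adic analytic local diffeomorphism on the regular semisimple locus with the claimed finite Weyl covering degree. The Lie-algebra determinant is a one-line root-space computation, but packaging it into an equality of invariant measures that sums correctly over the orbit maps $\phi_{T}$ for the different tori $E_{i}$ is where the real work sits.
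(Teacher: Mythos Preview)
The paper does not supply a proof of this lemma: it is quoted as a standard result with a citation to \cite[Formula 7.2.2]{JL70} and used as a black box in the subsequent character computation. Your sketch is the standard derivation of the Weyl integration formula for $\GL_{2}(F)$ --- parametrizing the regular semisimple set by the conjugation maps $\phi_{T}$ from each conjugacy class of maximal tori, computing the Jacobian $|\det(\Ad(x^{-1})-1)|_{\g/\mathfrak{t}} = \bigtriangleup(x)$, and dividing by $|W(T)|=2$ --- and it is correct as outlined. There is nothing to compare against in the paper itself.
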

We will use this formula to integrate the function $f(x) = \Theta_{\Pi_{1} - \Pi_{2}} \cdot \Theta_{W_{2}}(x)$ on $\mathcal{K}$ which is extended to ${\rm GL}_{2}(F)$ by setting it to be zero outside $\mathcal{K}$. In addition, we also need the following result of Harish-Chandra, cf. \cite[~Proposition 4.3.2]{Prasad92}.
\begin{lemma} [Harish-Chandra]  \label{lemma:HC}
Let $F(g) = (g v, v)$ be a matrix coefficient of a supercuspidal representation $\pi$ of a reductive $p$-adic group $G$ with center $Z$. Then the orbital integrals of $F$ at regular non-elliptic elements vanish. Moreover, the orbital integral \index{orbital integral} of $F$at a regular elliptic element $x$ contained in a torus $T$ is given by the formula
\begin{equation} \label{Harish-chandra theorem}
\int_{T \backslash G} F(\bar{g}^{-1}x\bar{g}) d\bar{g} = \dfrac{(v,v) \cdot \Theta_{\pi} (x)}{d(\pi) \cdot {\rm vol}(T/Z)},
\end{equation}
where $d(\pi)$ denotes the formal degree of the representation $\pi$ (which depends on a choice of Haar measure on $T \backslash G$).
\end{lemma}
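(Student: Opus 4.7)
The plan is to handle the two assertions separately. For regular non-elliptic $x$ one exploits \emph{cuspidality} of matrix coefficients of supercuspidal representations; for regular elliptic $x$ one combines the decomposition of $\pi|_T$ into character-isotypes with Schur orthogonality and the Fourier identity $\Theta_\pi|_T = \sum_\chi \dim(V_\chi)\,\chi$.

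For the vanishing: a regular non-elliptic $x$ has $T = Z_G(x)$ sitting inside a proper Levi subgroup $M$ of some parabolic $P = MN$. Using the Iwasawa decomposition $G = KMN$ to factor the orbital integral as an iterated integral over $K$, $T \backslash M$, and $N$, together with the substitution $n \mapsto x^{-1} n^{-1} x n$ on $N$ (a diffeomorphism with Jacobian $|\det(1 - \Ad(x))|_{\mathrm{Lie}\,N}|$, nonzero by regularity), the inner $N$-integral takes the form $\int_N F(g_1\, n\, g_2)\,dn$. This vanishes because matrix coefficients of a supercuspidal representation have vanishing constant term along every proper parabolic.

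For the formula at regular elliptic $x$: compactness of $T/Z$ and admissibility of $\pi$ yield a decomposition of smooth vectors $\pi = \bigoplus_\chi V_\chi$ into finite-dimensional character-isotypes for the characters $\chi$ of $T/Z$ whose restriction to $Z$ is the central character of $\pi$; let $P_\chi$ denote the orthogonal projection, which commutes with $\pi(T)$. For $x \in T^{\mathrm{reg}}$ one has the pointwise identity
\[
F(g^{-1} x g) = (\pi(x) \pi(g) v, \pi(g) v) = \sum_\chi \chi(x)\,\|P_\chi \pi(g) v\|^2,
\]
which is a finite sum for each fixed $g$ (since $\pi(g)v$ is smooth). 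Because $g \mapsto \|P_\chi \pi(g) v\|^2$ is left $T$-invariant (as $P_\chi$ commutes with the unitary $\pi(T)$), we have $\int_{G/Z} \|P_\chi \pi(g) v\|^2 dg = \vol(T/Z)\,\int_{T \backslash G} \|P_\chi \pi(g) v\|^2 dg$; and the left-hand side evaluates to $\dim(V_\chi)\,(v,v)/d(\pi)$ by applying Schur orthogonality term-by-term in an orthonormal basis of $V_\chi$. Summing over $\chi$ and invoking $\Theta_\pi|_T = \sum_\chi \dim(V_\chi)\,\chi$ yields
\[
O_x(F) = \frac{(v,v)}{d(\pi)\,\vol(T/Z)} \sum_\chi \dim(V_\chi)\,\chi(x) = \frac{(v,v)\,\Theta_\pi(x)}{d(\pi)\,\vol(T/Z)}.
\]

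The main technical obstacle is the justification of the character-by-character summation: since $\sum_\chi \dim(V_\chi)$ is typically infinite, the expansion $\Theta_\pi|_T = \sum_\chi \dim(V_\chi)\,\chi$ is genuinely an equality of distributions on $T/Z$, not an absolutely convergent identity. Passing to a pointwise equality at $x \in T^{\mathrm{reg}}$ requires a convergence argument on the compact abelian group $T/Z$ (e.g.\ Abel summability, or pairing both sides with smooth test functions supported in the regular set), leveraging the fact that $\Theta_\pi$ is smooth on $T^{\mathrm{reg}}$ and that $O_x(F)$ is independently known to be continuous there. A parallel care with measure normalizations on $Z$, $T$, $T/Z$ and $T \backslash G$ is required throughout to ensure $\vol(T/Z)$ appears with the correct power and no extraneous Weyl-group constants.
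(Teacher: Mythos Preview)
The paper does not prove this lemma; it is quoted as a classical result of Harish-Chandra with a pointer to \cite[Proposition~4.3.2]{Prasad92}, so there is no argument in the paper to compare yours against.

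Your treatment of the non-elliptic case is the standard one and is correct in outline. In the elliptic case the Schur-orthogonality computation of $\int_{G/Z}\|P_\chi\pi(g)v\|^2\,dg=\dim(V_\chi)(v,v)/d(\pi)$ is fine, and the test-function manoeuvre you propose really does determine the Fourier coefficients of $x\mapsto O_x(F)$ on $T/Z$: any smooth (hence locally constant) $\alpha\in C_c^\infty(T^{\mathrm{reg}}/Z)$ has only finitely many nonzero Fourier coefficients, so after pairing with $\alpha$ the sum over $\chi$ collapses and the interchange is harmless. That half of your plan is sound.

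The genuine gap is the last identification $\Theta_\pi|_{T^{\mathrm{reg}}}=\sum_\chi\dim(V_\chi)\,\chi$, which you treat as a formality. It is not. The distribution $\Theta_\pi$ is defined through $\tr\pi(\phi)$ for $\phi\in C_c^\infty(G)$, and Harish-Chandra's regularity theorem only says it is represented by a locally integrable function smooth on $G^{\mathrm{reg}}$; there is no a~priori reason the Fourier coefficients of that function \emph{restricted to the measure-zero subgroup} $T$ should be the $T$-multiplicities $\dim V_\chi$. (Contrast the case of a compact \emph{open} subgroup $K\subset G$, where test functions on $G$ supported in $K$ coincide with test functions on $K$ and the analogous identity is immediate; for a torus this bridge is missing.) Concretely, you need
\[
\tr\Bigl(\int_{T/Z}\alpha(t)\,\pi(t)\,dt\Bigr)=\int_{T/Z}\alpha(t)\,\Theta_\pi(t)\,dt
\]
for $\alpha\in C_c^\infty(T^{\mathrm{reg}}/Z)$, and your Fourier calculation shows this is \emph{equivalent} to the lemma once the orbital-integral side is known --- so invoking it is circular. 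Your proposed fixes (Abel summation, pairing with test functions on $T$) justify the interchange only on the orbital-integral side; they do not supply the missing link between $\Theta_\pi$ as a distribution on $G$ and its restriction to $T$. Harish-Chandra's own route avoids the spectral decomposition of $\pi|_T$ entirely and works with test functions on $G$, exploiting that the supercuspidal matrix coefficient $F$ itself lies in $C_c^\infty(G/Z,\omega)$.
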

Since $\pi_{2}$ is obtained by induction from $W_{2}$, a matrix coefficient \index{matrix coefficient} of $W_{2}$ (extended to ${\rm GL}_{2}(F)$ by setting it to be zero outside $\mathcal{K}$) is also a matrix coefficient of $\pi_{2}$. It follows that 
\begin{enumerate}
\item for the choice of Haar measure on ${\rm GL}_{2}(F)/F^{\times}$ giving $\mathcal{K}/F^{\times}$ measure 1, we have 
\[
\dim W_{2} = d(\pi_{2}),
\]
\item for a separable quadratic field extension $E_{i}$ of $F$ and a regular elliptic element $x$ of $\GL_{2}(E)$ which generates $E_{i}$, and for the above Haar measure $d \bar{g}$,
\begin{equation} \label{noname-2}
\int_{E_{i}^{\times} \backslash {\rm GL}_{2}(F)} \Theta_{W_{2}}(\bar{g}^{-1} x \bar{g}) d\bar{g} = \dfrac{\Theta_{\pi_{2}}(x)}{{\rm vol}(E_{i}^{\times}/F^{\times})}. 
\end{equation}
\end{enumerate}

\subsection{Completion of the proof of Theorem \ref{DP-metaplctic}}
We recall the following important observation from Section \ref{Weyl_I_F} and Theorem \ref{Cass-Prasad-GL2}:
\begin{enumerate}
\item the virtual representation $(\Pi_{1} - \Pi_{2})|_{\mathcal{K}}$ is finite dimensional,
\item $\Theta_{W_{2}}$ is also a matrix coefficient of $\pi_{2}$ (extended to $\GL_{2}(F)$ by zero outside $\mathcal{K}$), 
\item there is Haar measure on $\GL_{2}(F)/F^{\times}$ giving $\vol(\mathcal{K}/F^{\times})=1$ such that the Equation \ref{noname-2} is satisfied.
\item the orbital integral in Equation (\ref{Harish-chandra theorem}) vanishes if $T$ is maximal split torus.  
\end{enumerate}
Let $E_{i}$'s be the quadratic extensions of $F$. Then these observations together with Lemma \ref{lemma:HC}, imply the following
\[
\begin{array}{rcl}
m(\Pi_{1} - \Pi_{2}, W_2) &=& \dfrac{1}{\vol(\mathcal{K}/F^{\times})} \bigint\limits_{\mathcal{K}/F^{\times}} \Theta_{\Pi_{1} - \Pi_{2}} \cdot \Theta_{W_{2}}(x) \, dx \\
                                                   &=& \dfrac{1}{\vol(\mathcal{K}/F^{\times})} \bigint\limits_{\GL_{2}(F)/F^{\times}} \Theta_{\Pi_{1} - \Pi_{2}} \cdot \Theta_{W_{2}}(x) \, dx \\
                                                   &=& \dfrac{1}{\vol(\mathcal{K}/F^{\times})} \sum\limits_{E_{i}} \bigint\limits_{E_{i}^{\times}/F^{\times}} \bigtriangleup(x) \left[ \dfrac{1}{2} \bigint\limits_{E_{i}^{\times} \backslash \GL_{2}(F)} \Theta_{\Pi_{1} - \Pi_{2}} \cdot \Theta_{W_{2}}(\bar{g}^{-1}x \bar{g}) \, dg \right] dx \\
                                                   &=& \sum\limits_{E_{i}} \dfrac{1}{2 \vol(E_{i}^{\times}/F^{\times})} \bigint\limits_{E_{i}^{\times}/F^{\times}} \left( \bigtriangleup \cdot \Theta_{\Pi_{1} - \Pi_{2}} \cdot \Theta_{\pi_{2}} \right) (x) dx.
\end{array}                                                    
\]
Similarly, we have the equality
\[
 m(\Pi_{1} - \Pi_{2}, \pi_{2}') = \sum_{E_i}\dfrac{1}{2 \vol(E_{i}^{\times}/F^{\times})} \bigint\limits_{E_{i}^{\times}/F^{\times}} \left( \bigtriangleup . \Theta_{\Pi_{1} - \Pi_{2}} . \Theta_{\pi_{2}'} \right) (x)dx.
\]
Note that $E_{i}$'s correspond to quadratic extensions of $F$ and the embeddings of $\GL_{2}(F)$ and $D_{F}^{\times}$ have been fixed so that the working hypothesis (as stated in the introduction of this chapter) is satisfied, i.e. the embeddings of the $E_{i}$'s in $\GL_{2}(F)$ and in $D_{F}^{\times}$ are conjugate in $\widetilde{\GL}_{2}(E)$. Then the value of $\Theta_{\Pi_{1} - \Pi_{2}}(x)$ for $x \in E_{i}$, does not depend on the inclusion of $E_{i}$ inside $\widetilde{\GL}_{2}(E)$, i.e. on whether inclusion is via $\GL_{2}(F)$ or via $D_{F}^{\times}$. Now using the relation $\Theta_{\pi_{2}}(x) = -\Theta_{\pi_{2}'}(x)$ on regular elliptic elements $x$ \cite[~Proposition 15.5]{JL70}, we conclude the following, which proves the Equation (\ref{Equation:Main-2})

\[
 m(\Pi_{1} - \Pi_{2}, W_2) + m(\Pi_{1} - \Pi_{2}, \pi_{2}') = 0. 
\]
\section{A remark on higher multiplicity}
We have shown that the restriction of an irreducible admissible representation of $\widetilde{\GL}_{2}(E)$, for example a principal series representation, to the subgroup $\GL_{2}(F)$ has multiplicity more than one. 
Given the important role multiplicity one theorems play, it would be desirable to modify the situation so that multiplicity one might be true. 
One natural way to do this is to decrease the larger group, and increase the smaller group. 
In this section we discuss some natural subgroups of the group $\widetilde{\GL}_{2}(E)$ which can be used, but unfortunately, it still does not help one to achieve multiplicity one situation. We discuss this modification in this section in some detail.

Let us take the subgroup of $\widetilde{\GL}_{2}(E)$ which is generated by $\GL_{2}(F)$ and $\tilde{Z}$.
We will prove that this subgroup also fails to achieve multiplicity one for the restriction problem from $\widetilde{\GL}_{2}(E)$ to $\GL_{2}(F) \cdot \tilde{Z}$. 
Let $H = \GL_{2}(F) \subset H_{+} = Z \cdot \GL_{2}(F) \subset \GL_{2}(E)$.
We will show that the restriction of an irreducible admissible representation of $\widetilde{\GL}_{2}(E)$ to the subgroup  $\tilde{H_+}$ has higher multiplicity.
Note that the subgroups $\tilde{Z}$ and $\GL_{2}(F)$ do not commute but $\tilde{Z^2}$ commutes with $\GL_{2}(F)$. 
In fact, the commutator relation is given by
\begin{equation} \label{commutator relation}
 [\tilde{e}, \tilde{g}] = (e, \det g)_{E} \in \{ \pm 1 \} \subset \widetilde{\GL}_{2}(E),
\end{equation}
where $\tilde{e} \in \tilde{Z}$ and $\tilde{g} \in \widetilde{\GL}_{2}(F)$ lying over elements $e \in Z$ and $g \in \GL_{2}(F)$ respectively, and $(-,-)_{E}$ denotes the Hilbert symbol for the field $E$.
The lemma below proves that the center of $\tilde{H_+}$ is $\widetilde{Z^{2}F^{\times}}$.
\begin{lemma} \label{Hilbert symbol lemma}
 For an element $e \in E^{\times}$, the map $F^{\times} \rightarrow \{ \pm 1 \}$ defined by $f \mapsto (e, f)_{E}$ is trivial if and only if $e \in F^{\times} E^{\times 2}$.
\end{lemma}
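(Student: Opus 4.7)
The plan is to reduce the lemma to a statement about norms via the projection formula for Hilbert symbols, and then use Hilbert's Theorem 90 to finish.

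First I would invoke the projection formula from local class field theory: for $e\in E^{\times}$ and $f\in F^{\times}$,
\[
(e,f)_E \;=\; (N_{E/F}(e),\,f)_F.
\]
This identity is standard (coming from the compatibility of the local reciprocity maps of $E$ and $F$ under the norm, together with symmetry of the 2-Hilbert symbol). Granting it, the character $f\mapsto (e,f)_E$ of $F^{\times}$ is identified with $f\mapsto (N_{E/F}(e),f)_F$, and by the non-degeneracy of the Hilbert symbol on $F^{\times}/F^{\times 2}$ this character is trivial if and only if $N_{E/F}(e)\in F^{\times 2}$. So the lemma is reduced to proving
\[
N_{E/F}(e)\in F^{\times 2}\quad\Longleftrightarrow\quad e\in F^{\times}E^{\times 2}.
\]

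The $(\Leftarrow)$ direction is immediate: if $e=f_0 u^2$ with $f_0\in F^{\times}$ and $u\in E^{\times}$, then $N_{E/F}(e)=f_0^2\,N_{E/F}(u)^2\in F^{\times 2}$. For $(\Rightarrow)$, suppose $N_{E/F}(e)=f_1^2$ for some $f_1\in F^{\times}$. Then $N_{E/F}(e/f_1)=1$, so by Hilbert's Theorem 90 applied to the cyclic extension $E/F$ there exists $x\in E^{\times}$ with $e/f_1=\sigma(x)/x$, where $\sigma$ generates $\mathrm{Gal}(E/F)$. Multiplying top and bottom by $x$ gives
\[
e \;=\; f_1\,\frac{\sigma(x)\,x}{x^{2}} \;=\; \bigl(f_1\,N_{E/F}(x)\bigr)\cdot x^{-2},
\]
which lies in $F^{\times}E^{\times 2}$, as desired.

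The only step that is not purely formal is the projection formula; its proof (or a reference) may be worth including for completeness, but it is a classical fact, so the actual obstacle is essentially zero. An alternative route that avoids quoting the projection formula is a direct counting argument: the pairing $(\,,\,)_E$ is a perfect pairing on $E^{\times}/E^{\times 2}$, and one checks directly that $(f_0,f)_E=(N_{E/F}(f_0),f)_F=(f_0^2,f)_F=1$ for $f_0,f\in F^{\times}$, so $F^{\times}E^{\times 2}/E^{\times 2}$ is contained in its own annihilator; then a cardinality count of $F^{\times}/(F^{\times}\cap E^{\times 2})$ shows the inclusion is an equality. I prefer the projection-formula-plus-Hilbert-90 route as it is basis-free and avoids separating cases in residue characteristic $2$.
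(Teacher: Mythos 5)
Your proof is correct and follows essentially the same route as the paper: both invoke the norm-compatibility (projection) formula $(e,f)_E=(N_{E/F}(e),f)_F$, reduce via non-degeneracy of $(\cdot,\cdot)_F$ to showing $N_{E/F}^{-1}(F^{\times 2})=F^{\times}E^{\times 2}$, and settle that by Hilbert 90 --- the paper states it as ``$F^{\times}E^{\times 2}$ surjects onto $F^{\times 2}$ and contains $\ker N_{E/F}=\{z/\bar z=z^2/N(z)\}$,'' which is exactly your explicit computation $e=f_1 N_{E/F}(x)\cdot x^{-2}$ repackaged as a group-theoretic observation.
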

\begin{proof}
Let $( \cdot, \cdot)_{E}$ and $(\cdot, \cdot)_{F}$ denote the Hilbert symbol of the field $E$ and $F$ respectively. For $e \in E^{\times}$ and $f \in F^{\times}$, the following is well known \cite{Bender73}
\[
 (e, f)_{E} = (N_{E/F}(e), f)_{F},
\]
where $N_{E/F}$ is the norm map of the extension $E/F$.
Therefore, if $(e,f)_{E} = 1$ is true for all $f \in F^{\times}$, then by the non-degeneracy of the Hilbert symbol $(\cdot, \cdot)_{F}$ one will have $N_{E/F}(e) \in F^{\times 2}$.
The inverse image of $F^{\times 2}$ under the norm map $N_{E/F}$ is now seen to be $E^{\times 2}F^{\times}$ since this subgroup surjects onto $F^{\times 2}$ under the norm mapping,
and contains the kernel $\{ z/\bar{z} = z^{2}/z\bar{z} : z \in E^{\times} \}$ of $N_{E/F}$.
\end{proof}
Let $\sigma$ be an irreducible admissible representation of $\GL_{2}(F)$. 
By the commutator relation (\ref{commutator relation}), we have
\[
 a (g, \epsilon)a^{-1} = (g, \chi_{a}(g) \epsilon),
\]
where $\chi_{a}(g)$ is a character on $\GL_{2}(F)$ given by $\chi_{a}(g) = (a, \det g)_{E}$.
Therefore, the conjugation action by $a \in Z$ takes $\sigma$ to the quadratic twist $\sigma \otimes \chi_{a}$ where $\chi_{a}$ is given by $x \mapsto (x,a)_{E}$. 
We have the following lemma which easily follows from Clifford theory.
\begin{lemma}
Let $\tilde{H_0} = \tilde{Z^{2}} \cdot \GL_{2}(F)$.
Let $\sigma$ be an irreducible admissible representation of $\GL_{2}(F)$. 
Assume that $\sigma \otimes \chi_{a} \ncong \sigma$ for any non-trivial element $a \in E^{\times}/F^{\times}E^{\times 2}$. 
Fix a genuine character $\eta$ of $\tilde{Z^{2}}$ such that $\eta|_{F^{\times} \cap \tilde{Z^2}} = \omega_{\sigma}|_{F^{\times} \cap \tilde{Z^2}}$. 
Then $\rho = \Ind_{\tilde{H_0}}^{\tilde{H_+}} (\eta \sigma)$ is an irreducible representation of $\tilde{H_+}$. The representation $\rho$ is the only irreducible representation of $\tilde{H_+}$ whose central character restricted to $\tilde{Z^2}$ is $\eta$ and also contains $\sigma$. 
Moreover, $\rho|_{\tilde{H_0}} \cong \bigoplus_{a \in E^{\times}/F^{\times}E^{\times 2}} \eta(\sigma \otimes \chi_{a})$. 
In particular, by Lemma \ref{Hilbert symbol lemma}, the restriction of $\rho$ to $\tilde{H_0}$ is multiplicity free.
\end{lemma}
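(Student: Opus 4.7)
The plan is to apply a standard Clifford-theoretic argument to the normal inclusion $\tilde{H_0} \trianglelefteq \tilde{H_+}$, whose quotient is
\[
\tilde{H_+}/\tilde{H_0} \; \cong \; \tilde{Z}/\bigl(\tilde{Z^2}\cdot(\tilde{Z}\cap \GL_2(F))\bigr) \; \cong \; E^\times/E^{\times 2}F^\times.
\]
First I would compute the conjugation action of a representative $\tilde{a}\in\tilde{Z}$ (lying over $a\in E^\times$) on the representation $\eta\sigma$ of $\tilde{H_0}$. Since $\tilde{Z^2}$ is central in $\tilde{H_+}$, the character $\eta$ is fixed. On the $\GL_2(F)$-factor, the commutator relation (\ref{commutator relation}) gives $\tilde{a}\,(g,\epsilon)\,\tilde{a}^{-1} = (g,\chi_a(g)\epsilon)$, so the conjugate of $\sigma$ is precisely the twist $\sigma\otimes\chi_a$. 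Thus the $\tilde{H_+}/\tilde{H_0}$-orbit of $\eta\sigma$ is
\[
\{\,\eta(\sigma\otimes\chi_a) : a\in E^\times/F^\times E^{\times 2}\,\}.
\]

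By Lemma \ref{Hilbert symbol lemma}, the characters $\chi_a$ for distinct cosets $a\in E^\times/F^\times E^{\times 2}$ are pairwise distinct, and by hypothesis $\sigma\otimes\chi_a\ncong\sigma$ unless $a$ is trivial in this quotient. Hence the stabilizer of the isomorphism class of $\eta\sigma$ in $\tilde{H_+}/\tilde{H_0}$ is trivial, and all the twists $\eta(\sigma\otimes\chi_a)$ are mutually non-isomorphic. By the standard Clifford/Mackey criterion, $\rho = \Ind_{\tilde{H_0}}^{\tilde{H_+}}(\eta\sigma)$ is irreducible, and Mackey's restriction formula gives
\[
\rho|_{\tilde{H_0}} \;\cong\; \bigoplus_{a\in E^\times/F^\times E^{\times 2}} \eta(\sigma\otimes\chi_a),
\]
which is multiplicity free by the non-isomorphism observed above.

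For uniqueness, suppose $\rho'$ is an irreducible admissible representation of $\tilde{H_+}$ whose central character restricts to $\eta$ on $\tilde{Z^2}$ and such that $\Hom_{\GL_2(F)}(\sigma,\rho') \neq 0$. Then $\eta\sigma$ appears in $\rho'|_{\tilde{H_0}}$, and Frobenius reciprocity yields a nonzero map $\rho\to\rho'$, which must be an isomorphism since both sides are irreducible.

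No step looks genuinely hard; the only point requiring care is the verification that conjugation by $\tilde{a}\in\tilde{Z}$ on $\tilde{H_0}$ acts as announced. This reduces to applying (\ref{commutator relation}) and noting that the central $\tilde{Z^2}$-character $\eta$ is automatically preserved. Once that is in place, everything is a direct instance of Clifford theory for a finite cyclic quotient, combined with the Hilbert-symbol non-degeneracy already recorded in Lemma \ref{Hilbert symbol lemma}.
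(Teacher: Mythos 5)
Your proof is correct and follows essentially the same approach the paper intends: the paper states the lemma with the remark that it "easily follows from Clifford theory" and omits details, and your argument is precisely that Clifford/Mackey argument, using the commutator relation to identify the conjugation action as twisting by $\chi_a$, Lemma \ref{Hilbert symbol lemma} and the no-self-twist hypothesis to get a trivial stabilizer, and finite-index Frobenius reciprocity for irreducibility, the restriction decomposition, and uniqueness.
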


Note that if $\sigma$ is a principal series representation of $\GL_{2}(F)$ which is not of the form $Ps(\chi_1, \chi_2)$ with $\chi_1 / \chi_2$ a quadratic character, then such principal series representation of $\GL_{2}(F)$ have no non-trivial self twist. 
Let $\pi$ be an irreducible admissible genuine representation of $\widetilde{\GL}_{2}(E)$ such that $\dim \Hom_{\GL_{2}(F)}(\pi, \sigma) \geq 2$.
Let $\eta$ be the central character of $\pi$.
Note that the central character of any irreducible representation of $\tilde{H}_{+}$, which is contained in $\pi$, agrees with $\eta$ when restricted to $\tilde{Z^2}$.
Let $\rho = \Ind_{\tilde{H_0}}^{\tilde{H_+}} (\eta \sigma)$ as in the previous lemma.
The representation $\rho$ is the only representation of $\tilde{H}_{+}$ which appears in $\pi$ and contains $\sigma$.
So the multiplicity of such a principal series representation $\sigma$ of $\GL_{2}(F)$ in the restriction of an irreducible admissible genuine representation of $\widetilde{\GL}_{2}(E)$ is same as the multiplicity of the corresponding irreducible representation of $\tilde{H_+}$, i.e.
$\dim \Hom_{\tilde{H}_{+}}(\pi, \rho) = \dim \Hom_{\GL_{2}(F)}(\pi, \sigma) \geq 2$. Thus we conclude that the restriction of representations of $\widetilde{\GL}_{2}(E)$ to $\tilde{H}_{+}$ has higher multiplicity.

On the other hand, let us take the group $G = \{ g \in \GL_{2}(E) : \det g \in F^{\times} E^{\times 2} \}$. Note that this subgroup $G$ contains $\GL_{2}(E)_{+} = Z \cdot \SL_{2}(E)$.
We will prove that the pair $(\tilde{G}, \GL_{2}(F))$ also fails to achieve multiplicity one for restriction problem from $\tilde{G}$ to $\GL_{2}(F)$.
From the commutation reltion \eqref{commutator relation}, it follows that the center of the group $\tilde{G}$ is $\widetilde{F^{\times} Z^{2}}$.
Recall that the restriction from $\widetilde{\GL}_{2}(E)$ to $\widetilde{\GL}_{2}(E)_{+}$ is multiplicity free and $\tilde{G} \supset \widetilde{\GL}_{2}(E)_{+}$, thus the restriction from $\widetilde{\GL}_{2}(E)$ to $\tilde{G}$ is also multiplicity free.
Let $\pi$ be an irreducible admissible genuine representation of $\widetilde{\GL}_{2}(E)$ and $\rho$ be an irreducible admissible genuine representation of $\tilde{G}$ such that $\rho \hookrightarrow \pi|_{\tilde{G}}$. Then we have
\[
 \pi|_{\tilde{G}} = \bigoplus_{a \in E^{\times}/ F^{\times} E^{\times 2}} \rho^{a}.
\]
For $a_1 \neq a_2$ in $E^{\times}/F^{\times}E^{\times 2}$,  $\rho^{a_1} \ncong \rho^{a_2}$. 
In fact, the central characters of $\rho^{a_1}$ and $\rho^{a_2}$ are different when restricted to $F^{\times}$. 

Let $\pi$ be an irreducible admissible genuine representation of $\widetilde{\GL}_{2}(E)$ and $\sigma$ an irrducible admissible representation of $\GL_{2}(F)$ such that 
\[
 \dim \Hom_{\GL_{2}(F)}(\pi, \sigma) \geq 2.
\]
If $\Hom_{\GL_{2}(F)}(\rho^{a_1}, \sigma) \neq 0$ then $\Hom_{\GL_{2}(F)}(\rho^{a_2}, \sigma) = 0$ for $a_2 \neq a_1$ in $E^{\times}/F^{\times}E^{\times 2}$, since the central character of $\rho^{a_2}$ restricted to $F^{\times}$ will be different from the central character of $\sigma$.
Thus there exists only one $a \in E^{\times}/F^{\times}E^{\times 2}$ such that $\Hom_{\GL_{2}(F)}(\rho^{a}, \sigma) \neq 0$. 
We can assume that $\Hom_{\GL_{2}(F)}(\rho, \sigma) \neq 0$.
We have
\[
 \Hom_{\GL_{2}(F)} (\rho, \sigma) = \Hom_{\GL_{2}(F)} (\pi, \sigma)
\]
and hence $\dim \Hom_{\GL_{2}(F)} (\rho, \sigma) \geq 2$.

\end{document}